\documentclass[10pt, a4paper]{amsart}
\usepackage[utf8]{inputenc}
\usepackage[top=1.2in, bottom=1.2in, left=1.2in, right=1.2in]{geometry}

\usepackage{eulervm}
\usepackage{amsmath, amsthm, amssymb, amsfonts}
\usepackage{arydshln}

\usepackage{mathtools}
\usepackage{mathrsfs}
\usepackage{graphicx}
\usepackage{enumitem}
\usepackage{xcolor}
\usepackage{hyperref}
\hypersetup{
	colorlinks=true,
	linkcolor=blue,
	filecolor=blue,
	citecolor = blue,      
	urlcolor=blue,
}
\usepackage{multirow}

\setlist[enumerate,1]{label={\upshape(\roman*)}}

\theoremstyle{plain}
\newtheorem{theorem}{Theorem}[section]
\newtheorem{prop}[theorem]{Proposition}
\newtheorem{corollary}[theorem]{Corollary}
\newtheorem{lemma}[theorem]{Lemma}

\theoremstyle{definition}
\newtheorem{definition}[theorem]{Definition}
\newtheorem{example}[theorem]{Example}
\newtheorem{remark}[theorem]{Remark}
\newtheorem{notation}[theorem]{Notation}

\numberwithin{equation}{section}

\renewcommand{\phi}{\varphi}
\renewcommand{\epsilon}{\varepsilon}
\newcommand\rom{\mathrm}
\newcommand{\kk}{\pmb{k}}

\title{Modules of logarithmic derivations in weighted projective spaces and applications to free divisors}
\subjclass[2020]{Primary: 13N15, 14M25, 14J70, 13C40; Secondary: 14J60, 13D02, 13C40.}
\keywords{Logarithmic derivation, weighted projective space, eigenschemes, free divisor.}
\thanks{J.M. and W.N. are partially supported by Grant PID2024-156181NB-C33 (funded by MICIU/AEI/10.13039/501100011033
	and by FEDER, UE) and Grant E22\_23R Álgebra y Geometría (funded by Diputación General de Aragón). J.M. is also supported by Grant CNS2024-154271 (funded by MICIU/AEI/10.13039/501100011033),
	Grant RYC2021-034300-I (funded by MICIU/AEI/10.13039/501100011033 and
	the European Union NextGenerationEU/PRTR), and
	Grant FQM-333 (funded by Junta de Andalucía).}

\author[J.~Martín-Morales]{Jorge Martín-Morales}
\address{
	Jorge Martín-Morales. Departamento de Matemáticas, IUMA \\
	Universidad de Zaragoza \\ 
	C.~Pedro Cerbuna 12 \\ 
	50009 Zaragoza, Spain} 
\urladdr{\url{http://riemann.unizar.es/~jorge}}
\email{\href{mailto:jorge@unizar.es}{jorge.martin@unizar.es}}

\author[W.~Ng Kwing King]{Wayne Ng Kwing King}
\address{
	Wayne Ng Kwing King. Universit\'e de Pau et des Pays de l'Adour,
	LMAP-UMR CNRS 5142, \\
	Avenue de l'Universit\'e -BP 1155-\\
	64013 Pau Cedex, France\\
	And\\
	\newline
	Departamento de Matemáticas, IUMA \\
	Universidad de Zaragoza \\ 
	C.~Pedro Cerbuna 12 \\ 
	50009 Zaragoza, Spain} 
\email{\href{mailto:wnkking@univ-pau.fr}{wnkking@univ-pau.fr}}

\begin{document}

\maketitle

\begin{abstract}
	We introduce a weighted version of the module of logarithmic derivations of a divisor in weighted projective space, and provide a 
	generalization of Saito's criterion for freeness in terms of weighted multiple eigenschemes (wME-schemes). Freeness of the non-standard $\mathbb{Z}$-graded module allows one to consider big families of free divisors in 
	affine and standard projective space, i.e. when the module of logarithmic derivations of the divisor is free over the respective coordinate rings. We present a method to identify and construct these new families of free 
	divisors in affine and projective space in any dimension, and give numerous explicit examples.
\end{abstract}

\section{Introduction}

The area of multigraded commutative algebra in connection with toric varieties has been very much active in the past decades: classical work in commutative algebra and geometry on projective spaces have been generalized to
 toric varieties \cite{CoxCoordringofToricVar95, Mustata_ToricVar02, HerMustataPayne10}, results such as weighted \cite{MR2048221} and multigraded \cite{MS04} Castelnuovo-Mumford regularity, weighted 
\cite{MR4945952} and multigraded \cite{MR4801838} generalization of Green’s linear syzygy theorem, and a wide range of works on multigraded syzygies 
\cite{MR2073194, MR2278757, MR2429452, MR4156411, MR4810076, MR4457395, MR3331930, MR4215648} have emerged and found many new applications.

The present work generalizes the study of freeness of divisors in projective spaces to a particular type of toric varieties: weighted projective spaces.
From standard $\mathbb{Z}$-graded polynomial ring of projective space to the nonstandard $\mathbb{Z}$-graded setting, we study the nonstandard $\mathbb{Z}$-graded module of logarithmic derivations tangent to a 
reduced divisor in weighted projective space. Geometrically, these are vector fields tangent to the divisor in a complex variety, a classical object of study in algebra and geometry; the seminal work of Saito \cite{Saito1980} 
introduced the sheaf of logarithmic vector fields and its dual, and gave a determinantal criterion that characterized free divisors, i.e. those such that their module of
logarithmic derivations admits a basis. Faenzi, Jardim and Montoya \cite{faenzi2024logarithmicvectorfieldsfoliations} developed a toric version of the sheaf of logarithmic vector fields for simplicial toric 
varieties and of Saito's criterion. Weighted projective space fits in this theory and we make ample use of these results applied to our setting.

On the other hand, a reformulation of Saito's criterion in terms of eigenschemes first in \cite{JV_Eigenscheme2024} over the projective plane turned out to be a useful tool for studying freeness. This was generalized over 
$n$-dimensional projective spaces in terms of Multiple eigenschemes (ME-schemes) in \cite{digennaro2025saitostheoremrevisitedapplication}. Extending this theory, we introduce weighted versions of eigenschemes and 
Multiple eigenschemes (wME-schemes). Our main result is Theorem \ref{propMEscheme}, a weighted version of \cite[Proposition $3.3$]{digennaro2025saitostheoremrevisitedapplication} building on 
\cite[Theorem $2.5$]{JV_Eigenscheme2024}, Saito's criterion reformulation in terms of eigenschemes.

From this, exploiting the nonstandard  $\mathbb{Z}$-graded setting of weighted projective spaces, we are able to build, as applications, new free divisors from old, not only 
in weighted projective spaces, but also giving the corresponding affine free divisors and build new free divisors in standard projective spaces of arbitrary dimensions via a construction called the \textit{cone construction}; 
we also give their free exponents. We would also like to mention work by others, especially in \cite{st2014}, on weighted homogeneous polynomials and their homogenization in regard to freeness.

A variety of new examples of free divisors are presented as applications:
\begin{enumerate}

	\item[(1)] Divisors in $\mathbb{A}^\ell$ and $\mathbb{P}^\ell$ coming from complete reflection arrangements:
	
	Complete reflection arrangements $\mathcal{A}^\ell_\ell(n) = \{ x_1 \ldots x_\ell \prod_{1 \le i < j \le \ell} (x_i^n - x_j^n) = 0 \}$ as studied in \cite[Proposition $6.77$]{orlik_terao_1992arrangements} are classical 
	examples of free divisors for all $n \ge 1$, with free exponents $(1, n+1, 2n+1, \ldots, (\ell-1)n+1)$. The non-complete arrangements $\mathcal{A}^k_\ell(n) = \{ x_1 \ldots x_k \prod_{1 \le i < j \le \ell} (x_i^n - x_j^n) = 0 \}$ 
	for $0 \le k \le \ell$ are also known to be free with exponents $(1, n+1, 2n+1, \ldots, (\ell-2)n+1, (\ell-1)n - \ell + k+1)$ in \cite[Proposition $6.85$]{orlik_terao_1992arrangements}.

	\begin{itemize}
		\item[\textbf{Aff}:] 
		Using similar notations, we show in Corollary \ref{corAffremovelines} that the divisor $\mathcal{D}_\ell^k(\{n_i\})$ for $0 \le k \le \ell$ in $\mathbb{A}^\ell$
		defined by 
		$$F_k = p \cdot \prod_{1 \le i < j \le \ell}(x_i^{n_i} - x_j^{n_j}),\ \ \text{where} \ p = 
		\begin{cases}
			1, & \text{if}\  k = 0\\
			x_1 \cdots x_k, & \text{if}\ 1 \le k \le \ell
		\end{cases}$$
		is free for any positive integers $n_1, \ldots, n_\ell.$

		\item[\textbf{Proj}:] 
		By the cone construction, denoting $\_^h$ homogenization in the variable $x_0$ in $\kk[x_0, \ldots, x_\ell]$, we show in Theorem \ref{corProjremovelines} that the divisor $\widetilde{\mathcal{D}}_\ell^k(\{n_i\})$ 
		for $0 \le k \le \ell$ in $\mathbb{P}^\ell$
		defined by
		$$x_0 \cdot F_k^h = x_0 p \cdot \prod_{1 \le i < j \le \ell}(x_i^{n_i}x_0^{n_j - n_i} - x_j^{n_j}),
		\ \ \text{where} \ p = 
		\begin{cases}
			1, & \text{if}\  k = 0\\
			x_1 \cdots x_k, & \text{if}\ 1 \le k \le \ell
		\end{cases}$$ 
		is free with exponents 
		$$(1, n_\ell + 1, n_\ell + n_{\ell-1} + 1, ... ,n_\ell + n_{\ell-1} + \cdots +n_3 + 1, n_\ell + n_{\ell-1} + \cdots +n_2 + 1- \ell + k)$$
		for positive integers $ n_1, \ldots, n_\ell$ such that $n_1 \le n_2 \le \ldots \le n_\ell.$
	\end{itemize}

\item[(2)] A variant of Brieskorn-Pham polynomials in \cite[Example $5.3$]{Buchweitz_Conca_2012}:

	In \cite[Example $5.3$]{Buchweitz_Conca_2012}, Buchweitz and Conca showed that for $G_j = x_1^{r_1} + \cdots + x_j^{r_j}$ with $j = 2, \ldots, i$ and any positive integers $r_1, \ldots, r_i$,
	the product $G_2 \cdots G_i$ of Brieskorn-Pham polynomials is a free divisor as an application of \cite[Proposition $5.1$]{Buchweitz_Conca_2012} on triangular free divisors.

	\begin{itemize}
		\item[\textbf{Aff}:] Using weighted Multiple eigenscheme techniques (Theorem \ref{propMEscheme}), we show in Corollary \ref{corBrieskorn-Pham} a variant of \cite[Example $5.3$]{Buchweitz_Conca_2012}.
		Let $\Lambda$ be any finite set of distinct non-zero elements over the base field $\kk$. The divisor in $\mathbb{A}^3$ defined by the polynomial
		$$F_\Lambda = (x^{r_0} + y^{r_1})\prod_{\alpha \in \Lambda} (x^{r_0} + y^{r_1} + \alpha z^{r_2})$$
		in $\kk[x,y,z]$ is free for all positive integers $r_0, r_1, r_2.$
		
		\item[\textbf{Proj}:] 
		\begin{enumerate}
			\item An immediate result is the homogeneous case of weighted projective free divisors: in $\mathbb{P}^2$, the divisor defined by
			$$F = (x^{r} + y^{r})\prod_{\alpha \in \Lambda} (x^{r} + y^{r} + \alpha z^{r})$$ is free for all $r \in \mathbb{N}^*$ with exponents $(r-1, r|\Lambda|)$ where $|\Lambda|$ is the cardinality of the set.

			\item By the cone construction, as earlier if we denote $\_^h$ homogenization in the variable $t$ in $\kk[x,y,z,t]$ and let $\Lambda$ and $F_\Lambda$ be as defined above, we show in 
			Theorem \ref{thmBrieskorn-PhamProj}
			that the divisor in $\mathbb{P}^3$ defined by the homogeneous polynomial
			$$t \cdot F_\Lambda^h$$
			is free with exponents $(1, \rom{max}\{r_0,r_1\} - 1, |\Lambda|\cdot \rom{max}\{r_0,r_1, r_2\} ).$
			
			In particular, if $r_0 \le r_1 \le r_2$, then the divisor defined in $\mathbb{P}^3$ by
			$$ t (t^{r_1-r_0}x^{r_0} + y^{r_1})\prod_{\alpha \in \Lambda} (t^{r_2-r_0}x^{r_0} + t^{r_2-r_1}y^{r_1} + \alpha z^{r_2})$$
			is free with exponents $(1,r_1 - 1, |\Lambda|r_2)$.
		\end{enumerate}
	\end{itemize}

\item[(3)] Generalized case of \cite[Theorem $4.2$]{digennaro2025saitostheoremrevisitedapplication} on pencils of hypersurfaces in $\mathbb{P}^n$ :

In \cite[Theorem $4.2$]{digennaro2025saitostheoremrevisitedapplication}, the following family of free divisors in $\mathbb{P}^n$ is given:
starting with the free hyperplane arrangement $\mathcal{A} : x_0 \cdots x_n = 0$, one considers its Jacobian ideal
$$J = (h_0, h_1, \ldots, h_n),$$
where $h_i := x_0 \cdots \hat{x_i}\cdots x_n$, for $0 \le i \le n$, with singular locus the ${{n+1}\choose{2}} $ codimension two
faces of the hypertetrahedron. Fix $m_0$ such that $n = 2m_0 + \epsilon$ where $\epsilon = 0$ or $1$. Then the defining polynomials of two hypersurfaces
$S_1$ and $S_2$ are given respectively as
$$f_1 = \sum_{i = m_0+1}^n h_i \text{ and } f_2 = \sum_{i = 0}^{m_0} h_i.$$
Furthermore, consider the pencil of hypersurfaces $C(f_1,f_2) = \{ S_{a,b} = a S_1 + b S_2\}_{[a;b] \in \mathbb{P}^1}$ of degree $n$.

In \cite[Theorem $4.2$]{digennaro2025saitostheoremrevisitedapplication}, Di Gennaro and Miró-Roig showed that the divisors 
\begin{enumerate}
	\item $S_1S_2$ defined by $f_1f_2$ is free with exponents $(1, 2,\cdots, 2)$, and
	\item $S_1S_2 \prod_{i = 3}^k a_i S_1 + b_i S_2$, defined by $f_1f_2\prod_{i = 3}^k (a_i f_1 + b_i f_2)$, where $[a_i;b_i] \in  \mathbb{P}^1$, $k \ge 3$
	and $a_i S_1 + b_i S_2$ are generic members of the pencil $C(f_1,f_2)$, is free with exponents $(2,\cdots, 2, n(k- 2) + 1)$.\\
\end{enumerate}

\begin{itemize}
	\item[\textbf{Aff}:] Now define
	$$\widetilde{h_i} := x_0^{r_0} \cdots \widehat{x_i^{r_i}}\cdots x_n^{r_n},$$
	where $r_i \in \mathbb{Z}_{>0}$ are any positive integers for $0 \le i \le n$, and for 
		\textit{any} choice of $m$
	 where $0 \le m \le n-1$, define
	$$\widetilde{f_1} = \sum_{i = m+1}^n \widetilde{h_i} \text{ and } \widetilde{f_2} = \sum_{i = 0}^{m} \widetilde{h_i}.$$
	For a polynomial $f$, denote $f^{\rom{red}}$ its reduced polynomial, i.e. product of square-free irreducible factors.
	Similarly, for a divisor $D$, denote $D^{\rom{red}}$ its corresponding reduced divisor.\\
	
	We show in Corollary \ref{corAff_f1f2} that
	\begin{enumerate}
		\item the divisor $(\widetilde{S_1}\widetilde{S_2})^{\rom{red}}$ in affine space $\mathbb{A}^{n+1}$ defined by the polynomial
		$$(\widetilde{f_1}\widetilde{f_2})^{\rom{red}} = x_0 \cdots x_n 
		\left( \sum_{i = 0}^m x_{0}^{r_{0}} \cdots \widehat{x_i^{r_i}}\cdots x_m^{r_m} \right)
		\left( \sum_{i = m+1}^n x_{m+1}^{r_{m+1}} \cdots \widehat{x_i^{r_i}}\cdots x_n^{r_n} \right)$$
		is free for any positive integers $r_0, \ldots, r_n$.
		\item The divisor $(\widetilde{S_1}\widetilde{S_2})^{\rom{red}} \prod_{i = 3}^k a_i \widetilde{S_1} + b_i \widetilde{S_2}$ in affine space $\mathbb{A}^{n+1}$ 
		defined by the polynomial
		$$\left(\widetilde{f_1}\widetilde{f_2}\right)^{\rom{red}}\prod_{i = 3}^k (a_i \widetilde{f_1} + b_i \widetilde{f_2}),$$
		for $k \ge 3$, where $a_i \widetilde{S_1} + b_i \widetilde{S_2} = V(a_i \widetilde{f_1} + b_i \widetilde{f_2}),$ with $[a_i;b_i] \in  \mathbb{P}^1$ for 
		$\ 3 \le i \le k$ are $k-2$ hypersurfaces in $\mathbb{A}^{n+1}$ corresponding to general distinct points of $L_{ \widetilde{f_1}, \widetilde{f_2}}$, is
		free for any positive integers $r_0, \ldots, r_n$.		
	\end{enumerate}

	\item[\textbf{Proj}:]
	
	\begin{enumerate}
		\item An immediate consequence of weighted projective free divisors is to consider the homogeneous case: let 
		$$h'_i := x_0^r \cdots \widehat{x_i^r}\cdots x_n^r,$$ 
		for $0 \le i \le n$ and $r \in \mathbb{Z}_{>0}$, and for \textit{any} choice of $m$ where $0 \le m \le n-1$, we define 
		$$f'_1 = \sum_{i = m+1}^n h'_i \text{ and } f'_2 = \sum_{i = 0}^{m} h'_i,$$
		the two homogeneous polynomials of degree $rn$ of the hypersurfaces $S'_1$ and $S'_2$ respectively.\\
		
		In $\mathbb{P}^n$, with the above notation, we show in Corollary \ref{corf1f2powerN} that the divisors
		\begin{enumerate}
			\item $(S'_1S'_2)^{\rom{red}}$ defined by 
			$$(f'_{1}f'_{2})^{\rom{red}} = x_0 \cdots x_n 
			\left( \sum_{i = 0}^m x_{0}^{r} \cdots \widehat{x_i^{r}}\cdots x_m^{r} \right)
			\left( \sum_{i = m+1}^n x_{m+1}^{r} \cdots \widehat{x_i^{r}}\cdots x_n^{r} \right)$$
			is free with exponents $(1, r+1,\cdots, r+1)$, and
			\item $(S'_1S'_2)^{\rom{red}} \prod_{i = 3}^k (a_i S'_1 + b_i S'_2)$, defined by $$(f'_1f'_2)^{\rom{red}}\prod_{i = 3}^k (a_i f'_1 + b_i f'_2),$$ for $k \ge 3$, $[a_i;b_i] \in  \mathbb{P}^1$,
			and $a_i S'_1 + b_i S'_2$ being generic members of the pencil $C(f'_1,f'_2)$, is free with exponents $(r+1,\cdots, r+1, rn(k - 2) + 1)$.
		\end{enumerate}

		\item In $\mathbb{P}^{n+1}$, by the cone construction, if we denote $\_^h$ homogenization in the variable $x_{n+1}$ in $\kk[x_0, \ldots, x_{n+1}]$ for a polynomial and its corresponding divisor, and if we let 
		the hyperplane $H = V(x_{n+1}) \subset \mathbb{P}^{n+1},$ we show in Theorem \ref{thmDiGProj} that
		for positive integers $r_0, \ldots, r_n$ such that $r_0 \le \ldots \le r_n$,
		\begin{enumerate}
			\item The divisor $H((\widetilde{S_1}\widetilde{S_2})^{\rom{red}})^h$ in projective space $\mathbb{P}^{n+1}$ defined by the
			homogeneous polynomial $x_{n+1} ((\widetilde{f_1}\widetilde{f_2})^{\rom{red}})^h =$
			$$x_0 \cdots x_{n+1}
			\left( \sum_{i = 0}^m x_{n+1}^{r_i - r_0} x_{0}^{r_{0}} \cdots \widehat{x_i^{r_i}}\cdots x_m^{r_m} \right)
			\left( \sum_{i = m+1}^n x_{n+1}^{r_i - r_{m+1}} x_{m+1}^{r_{m+1}} \cdots \widehat{x_i^{r_i}}\cdots x_n^{r_n} \right)$$
			is free with exponents $(1,1,r_1 +1, r_2+1, \ldots,  \widehat{r_{m+1} +1}, \ldots, r_n +1)$.
			
			\item The divisor $H((\widetilde{S_1}\widetilde{S_2})^{\rom{red}})^h \prod_{i = 3}^k \left( a_i \widetilde{S_1} + b_i \widetilde{S_2} \right)^h$ in projective space $\mathbb{P}^{n+1}$ defined by the 
			homogeneous polynomial
			$$x_{n+1} ((\widetilde{f_1}\widetilde{f_2})^{\rom{red}})^h\prod_{i = 3}^k \left( a_i \widetilde{f_1} + b_i \widetilde{f_2} \right)^h,$$
			for $k \ge 3$, $[a_i;b_i] \in  \mathbb{P}^1$ and 
			$\left( a_i \widetilde{S_1} + b_i \widetilde{S_2} \right)^h= V \left(\left( a_i \widetilde{f_1} + b_i \widetilde{f_2} \right)^h \right)$ are generic members
			of the pencil $C(x_{n+1}^{\rom{deg} \widetilde{f_2} - \rom{deg} \widetilde{f_1}} \cdot \widetilde{f_1}^h, \widetilde{f_2}^h )$, is free with
			exponents $$(1,r_1 +1, r_2+1, \ldots,  \widehat{r_{m+1} +1}, \ldots, r_n +1, (k-2)\sum_{i = 1}^n r_i + 1 ).$$
		\end{enumerate}
	\end{enumerate}
	
\end{itemize}

\end{enumerate}

\subsection*{Outline} In Section \ref{secPrelim}, we recall the background on the module of logarithmic derivations, eigenschemes and Multiple eigenschemes in projective space. 
In Section \ref{secwDerMod}, we introduce the 
module of logarithmic derivations in weighted projective space and how it fits into the theory of sheafs of logarithmic vector fields on simplicial toric varieties in \cite{faenzi2024logarithmicvectorfieldsfoliations}. 
In Section \ref{secwME}, we study weighted eigenschemes and Multiple eigenschemes (wME-schemes), proving Theorem \ref{propMEscheme}, Saito's criterion reformulation in terms of wME-schemes. 
Section \ref{secwFreeDiv} studies free divisors in weighted projective spaces and how they relate to the freeness of their corresponding divisors in affine and projective spaces, which for the latter via the cone 
construction. In Section \ref{secApplis}, from finding new free divisors from old to proving freeness with the new techniques, new examples of free divisors are provided as applications, in both affine and 
projective spaces.

\subsection*{Acknowledgments} The authors gratefully acknowledge the laboratory of Algebraic Geometry and Applications to Information Theory (French acronym: GAATI), where an important part of this 
research was carried out, and would like to thank the second author's advisor Jean Vallès for useful discussions on the subject.
The computer algebra systems \texttt{Singular} \cite{DGPS} and \texttt{Macaulay2} \cite{M2} were indispensable in computing examples.

\section{Preliminaries} \label{secPrelim}

\textbf{Notations.} 
In this section, let $R = \kk[x_0, \ldots, x_n]$ be the ring of polynomials in $n+1$ variables with the standard grading, where $\kk$ will be a field of characteristic $0$ although many statements can be 
generalized to fields of any characteristic. When the ring of polynomials will have a non-standard $\mathbb{Z}$-grading given by some 
weight vector $\omega = (\omega_0, \ldots, \omega_n)$, then we will denote it by $S= \kk[x_0, \ldots, x_n]$. 
The partial derivatives with respect to the variables $x_0, \ldots, x_n$ will be written as ${\partial_{x_i}} := \frac{\partial}{\partial x_i}$.

\subsection{Module of logarithmic derivations}
The polynomial ring $R = \bigoplus_{i} R_i$ is a 
standard graded ring over the field $R_0 = \kk$, hence $\rom{Proj}(R) = \mathbb{P}^n$. The module of $\kk$-derivations over $R$, $\rom{Der}_{\kk}(R) \cong \bigoplus_{i=0}^n R \,\partial_{x_i}$ is a free 
graded $R$-module of rank $n+1$.

For a reduced divisor $D = V(f)$ where $f \in R_d$ is a reduced homogeneous polynomial of degree $d \ge 1$, the module of derivations $\rom{Der}_{\kk}(-\rom{log}\, D)$ tangent to $D$ is defined as
\begin{equation} \label{eqStandardDerLog}
	\rom{Der}_{\kk}(-\rom{log}\, D) := \{ \delta \in \rom{Der}_{\kk}(R) \, | \, \delta(f) \in Rf \}.
\end{equation}
It is a reflexive $R$-module (\cite[Corollary 1.7]{Saito1980}) and is a graded submodule of $\rom{Der}_{\kk}(R)$ in the natural way:
$\rom{Der}_{\kk}(-\rom{log}\, D)_m = \rom{Der}_{\kk}(-\rom{log}\, D) \cap \rom{Der}_{\kk}(R)_m \text{ for }m \ge 0.$

The Euler derivation
$\delta_\mathcal{E} = \sum^n_{i = 0} x_i \,\partial_{x_i}$
lies in $\rom{Der}_{\kk}(-\rom{log}\, D)_1$ and satisfies $\delta_\mathcal{E}(f) = df$. When char($\kk$) does not divide deg($f$), we obtain the following decomposition for any 
$\delta \in \rom{Der}_{\kk}(-\rom{log}\, D)$:
$$\delta = \widetilde{\delta} + (\frac{\delta(f)}{df})\delta_\mathcal{E},$$ 
where $\widetilde{\delta} = \delta - (\frac{\delta(f)}{df} )\delta_\mathcal{E}$ such that $\widetilde{\delta}(f) = 0$. This yields the decomposition
\begin{equation} \label{eqDer0}
	\rom{Der}_{\kk}(-\rom{log}\, D) = R\delta_\mathcal{E} \oplus \rom{Der}_0(-\rom{log}\, D).
\end{equation}

$\rom{Der}_0(-\rom{log}\, D) = \{\delta \in \rom{Der}_{\kk}(R) \, | \, \delta(f) = 0 \}$ is called the \textit{module of logarithmic derivations} and is also the kernel of the \textit{gradient} map 
\begin{equation}
	\nabla f = (\frac{\partial f}{\partial x_0}, \ldots, \frac{\partial f}{\partial x_n}) : R^{n+1} \to R(d-1),
\end{equation}
which surjects onto $J_D(d-1)$. Here $J_D = ( \partial_{x_0} f, \ldots, \partial_{x_n} f)$ is the Jacobian ideal of $f$, which defines the singular locus of the divisor $D$.
This fits in a short exact sequence
$$ 0 \to \rom{Der}_0(-\rom{log}\, D) \to R^{n+1} \xrightarrow{\nabla f} J_D(d-1) \to 0,$$
where $\rom{Der}_0(-\rom{log}\, D)$ is the first syzygy module of $J_D(d-1)$ and with a shift in degree, this is the start of a resolution of the Jacobian ideal $J_D$, 
making $\rom{Der}_0(-\rom{log}\, D) \simeq Syz(J_D)$, 
the first syzygy module of $J_D$. By decomposition (\ref{eqDer0}), $\rom{Der}_0(-\rom{log}\, D)$ is also a reflexive module.

$\rom{Der}_{\kk}(-\rom{log}\, D)$ defined in (\ref{eqStandardDerLog}) is also called the \textit{extended (or reduced) module of logarithmic derivations}
(\cite[Definition $3.1$]{faenzi2024logarithmicvectorfieldsfoliations} and \cite[Chapter $1$]{faenzi_research}) and can be seen as the kernel of the map 
\begin{equation}\label{mapJacobianMod}
	\overline{\nabla f} = (\frac{\partial f}{\partial x_0}, \ldots, \frac{\partial f}{\partial x_n}) : R^{n+1} \to \frac{R}{(f)}(d-1).
\end{equation}
This fits in a short exact sequence
$$ 0 \to \rom{Der}_{\kk}(-\rom{log}\, D) \to R^{n+1} \xrightarrow{\overline{\nabla f}} \bar{J_D}(d-1) \to 0,$$
where $\bar{J_D}$ is the image of the Jacobian ideal in $R/(f)$. One can see the map (\ref{mapJacobianMod}) as 
\begin{align*}
	\rom{Der}_{\kk}(R)& \simeq R^{n+1} \to \frac{R}{(f)}(d-1),\\
	\delta& \mapsto \delta(f)\, \rom{mod}\, f,
\end{align*}
with the kernel,
a submodule of $\rom{Der}_{\kk}(R)$ (see \cite{saito_avatars_jv_df_24}).

\begin{definition}
	A reduced divisor $D$ in projective space $\mathbb{P}^n$ is \textit{free with exponents $(a_1, \ldots, a_n)$} if  
	$$\rom{Der}_0(-\rom{log}\, D) = R \delta_{1} \oplus \ldots \oplus R \delta_{n}  \cong \bigoplus^n_{i=1} R(-a_i)$$ is a free $R$-module, generated by a basis of $n$ of derivations $\delta_{1}, \ldots, \delta_{n}$, 
	with $ a_i = \rom{deg}(\delta_{i})$ for $1 \le i \le n$. By decomposition \eqref{eqDer0}, this is equivalent to $\rom{Der}_{\kk}(-\rom{log}\, D) \cong \bigoplus^n_{i=0} R(-a_i)$ being a free $R$-module with basis 
	consisting of the Euler derivation $\delta_{\mathcal{E}_\omega}$ and $\delta_{1}, \ldots, \delta_{n}$.
\end{definition}

\begin{remark}
	An equivalent definition of free divisor is that $J_D$, the Jacobian ideal of $f \in \kk[x_1, \ldots, x_n]$ is \textit{perfect} (or equivalently in this case \textit{Cohen-Macaulay} by 
	\cite[Theorem $2.1.5$]{Bruns_Herzog_1998}) of codimension $2$; perfect ideals of codim $2$ in this setting are completely described by the Hilbert-Burch theorem (see \cite[Theorem $20.15$]{Eisenbud95}).
\end{remark}

\subsection{Eigenscheme and Multiple eigenscheme}

The language of eigenscheme and Multiple eigenscheme (ME-scheme) in $\mathbb{P}^n$ has been expressed in \cite{digennaro2025saitostheoremrevisitedapplication} and 
\cite{Beorchia_Ternary_tensors_eigenscheme_2021} 
in terms of tensors, in particular \textit{partially symmetric tensors} which will be elements of $\rom{Sym}^d \kk^{n+1} \otimes \kk^{n+1}$, for $\kk$ a field of characteristic $0$. Once a basis has been chosen for 
$\kk^{n+1}$, then the
space $\rom{Sym}^d\kk^{n+1}$ is identified with $\kk[x_0, \ldots, x_n]_d$, and hence $\rom{Sym}^d \kk^{n+1} \otimes \kk^{n+1}$ will be the space of $(n+1)$-tuples of homogeneous polynomials of degree $d$, 
namely $(\rom{Sym}^d\kk^{n+1})^{\oplus n+1}$. We will also use the language of tensors to define weighted versions of the eigenscheme.

In the study of free divisors, a useful condition \cite[Theorem $2.5$]{JV_Eigenscheme2024} for testing freeness of curves in $\mathbb{P}^2$ was introduced; it reformulated Saito's criterion using eigenscheme as 
main tool. The idea was naturally extended to $\mathbb{P}^n$ in 
\cite{digennaro2025saitostheoremrevisitedapplication} where the Multiple eigenscheme was henceforth introduced. We recall the definitions:

Let $T = (g_0, g_1, \ldots, g_n) \in (\rom{Sym}^d\kk^{n+1})^{\oplus n+1}$ be a partially symmetric tensor. The \textit{eigenscheme} of $T$ is the closed subscheme $E(T) \subset \mathbb{P}^n$
defined by the $2 \times 2$ minors of the matrix
$$M_{T} = \begin{pmatrix}
	 x_0 & x_1 & \ldots& x_n\\
	g_0 & g_1 & \ldots &g_n
\end{pmatrix}.$$

For $1 \le i \le r \le n-1,$ let $T_i = (g_0^i, g_1^i, \ldots, g_n^i) \in (\rom{Sym}^d\kk^{n+1})^{\oplus n+1}$ be $r$ partially symmetric tensors. The \textit{Multiple eigenscheme} (ME-scheme)
of $T_1, \ldots, T_r$ is the closed subscheme $E(T_1,\ldots, T_r) \subset \mathbb{P}^n$ defined by the $(r+1) \times (r+1)$ minors of the matrix

$$M_{T_1, \ldots, T_{r}} = \begin{pmatrix}
	x_0 & x_1 & \ldots& x_n\\
	g_0^1 & g_1^1 & \ldots &g_n^1\\
	\vdots &&&\vdots\\
	g_0^r & g_1^r & \ldots &g_n^r
\end{pmatrix}.$$

By the Giambelli–Thom–Porteous formula (see \cite[Ch $12.1$]{eisenbud20163264}), the expected codimension of $E(T_1, \ldots, T_{r})$, $$\rom{codim}(I_{E(T_1, \ldots, T_{r})}) = n -r +1$$ 
when $T_1, \ldots, T_r$ are general, where
$I_{E(T_1, \ldots, T_{r})}$ denotes the ideal of maximal minors of $M_{T_1, \ldots, T_{r}}$. Hence the eigenscheme $E(T)$ is in general a finite scheme, and in particular the ME-scheme $E(T_1, \ldots, T_{n-1})$ 
is generally in codimesion $2$; these assumptions are both present in the main theorems  \cite[Theorem $2.5$]{JV_Eigenscheme2024} and \cite[Proposition $3.3$]{digennaro2025saitostheoremrevisitedapplication}.
 
 The indeterminancy locus and fixed points of the rational map 
 $$\mathbb{P}^n \dasharrow \mathbb{P}^n, p \mapsto  [g_0(p): \ldots: g_n(p)]$$
  define set theoretically the eigenscheme $E(T)$,
 while the Multiple eigenscheme $E(T_1,\ldots, T_r)$ is the closure of the union of points which are in the linear space spanned by  $[g^i_0(p): \ldots: g^i_n(p)]$ for $1 \le i \le r$. 
The coordinate rings of eigenschemes and Multiple eigenschemes that have the expected codimensions are Cohen-Macaulay rings, their defining ideals saturated and are standard determinantal schemes 
(see \cite{digennaro2025saitostheoremrevisitedapplication} and \cite{BEORCHIA2023107269}). We refer to \cite{abo2017a}, \cite{Beorchia_Ternary_tensors_eigenscheme_2021} for further details on 
eigenscheme of tensors.

\section{Module of logarithmic derivations for weighted projective spaces} \label{secwDerMod}

\subsection{Weighted projective space}

Let $S = \kk[x_0, \ldots, x_n]$ be a polynomial ring over a field $\kk$ and $\omega$ be the weight vector $(\omega_0, \ldots, \omega_n)$ where 
$\omega_i = \text{deg } x_i > 0$, hence $(S, \omega)$ or $S(\omega_0, \ldots, \omega_n)$ stands for a polynomial ring with the graduation 
given by $\omega$, and has a \textit{non}standard $\mathbb{Z}$-grading if not all $\omega_i = 1.$ The \textit{weighted projective space} 
denoted $\mathbb{P}^n_\omega$ or $\mathbb{P}(\omega_0, \ldots, \omega_n)$ is $\mathbb{P}^n_\omega := \rom{Proj }\, S$.

Every polynomial in $S$ is a sum of monomials $\textbf{x}^\textbf{r} = \prod_i x_i^{r_i}$, having weighted degree $\sum_i r_i \omega_i$. 

\begin{definition}
	The \textit{weighted degree of a polynomial} $g$, denoted $\rom{deg}_\omega(g)$, is the maximal weighted degree of its monomials.
	A polynomial $f$ is called \textit{weighted homogeneous} (or \textit{quasihomogeneous}) of degree $d$ or \textit{homogeneous of weight $d$} if every monomial of $f$ has weighted degree $d$. 
\end{definition}

\begin{lemma}(See e.g. \cite[Lemma $5.5, 5.7$]{Iano-Fletcher_2000}) \label{lemWProjSpaceProperties}
	\begin{enumerate}
		\item Let $a$ be a positive integer. Then  $\mathbb{P}(\omega_0, \ldots, \omega_n) \simeq \mathbb{P}(a\omega_0, \ldots, a\omega_n)$.
		\item Suppose $\omega_0, \ldots, \omega_n$ have no common factor. Let $q = gcd(\omega_0, \ldots, \widehat{\omega_i}, \ldots, \omega_n)$, 
		the greatest common factor of $\omega_j$ with $j \ne i$, then 
		
		$\mathbb{P}(\omega_0, \ldots, \omega_n) \simeq \mathbb{P}(\omega_0/q,\ldots, \omega_i, \ldots,  \omega_n/q)$.
	\end{enumerate}	
\end{lemma}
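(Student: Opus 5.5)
The plan is to work with the graded ring description, since $\mathbb{P}^n_\omega = \rom{Proj}\, S$. We use two standard facts. First, $\rom{Proj}$ of a graded ring is unchanged under passing to a Veronese subring $S^{(a)} = \bigoplus_{k} S_{ak}$. Second, $\rom{Proj}$ is unchanged under rescaling the grading, i.e. declaring the degree-$ak$ part to have degree $k$. Both follow from the affine-chart description: $\rom{Proj}\, S$ is glued from the spectra $\rom{Spec}\, S_{(x_i)}$ of degree-zero localizations, and these degree-zero parts are literally the same rings in each situation.

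For (i), let $S' = \kk[x_0,\ldots,x_n]$ with weights $a\omega_i$. Then $S'_{ak} = S_k$ and $S'_m = 0$ for $a \nmid m$. So $S'$ is $S$ with every degree multiplied by $a$. For each $i$, the degree-zero parts of the localizations at $x_i$ coincide: $S'_{(x_i)} = S_{(x_i)}$, since a fraction $g/x_i^m$ has degree zero in one grading exactly when it does in the other. These identifications are compatible on the overlaps $D_+(x_ix_j)$, so they glue to an isomorphism $\rom{Proj}\, S' \cong \rom{Proj}\, S$.

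For (ii), take $i=0$ for notation and write $q = \gcd(\omega_1,\ldots,\omega_n)$. Since the weights have no common factor, $\gcd(q,\omega_0)=1$.

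1. Let $S^{(q)} = \bigoplus_k S_{qk}$. A monomial $x_0^{r_0}x_1^{r_1}\cdots x_n^{r_n}$ has degree $r_0\omega_0 + \sum_{j\ge1} r_j\omega_j$. This is divisible by $q$ iff $q \mid r_0\omega_0$, iff $q \mid r_0$. Hence $S^{(q)} = \kk[x_0^q, x_1,\ldots,x_n]$, which is again a polynomial ring. Its generators have degrees $q\omega_0, \omega_1,\ldots,\omega_n$, and after dividing the grading by $q$ these become $\omega_0, \omega_1/q, \ldots, \omega_n/q$.

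2. Show $\rom{Proj}\, S \cong \rom{Proj}\, S^{(q)}$, the Veronese isomorphism. On the chart $x_j$ with $j\ge 1$, note that $x_j^{q}$ lies in $S^{(q)}$, and $x_j$ itself does too since $q \mid \omega_j$. Any degree-zero fraction $g/x_j^m$ can therefore be rewritten with numerator and denominator in $S^{(q)}$, so $S_{(x_j)} = (S^{(q)})_{(x_j)}$. On the chart $x_0$, use $x_0^q \in S^{(q)}$: we have $S_{(x_0)} = S_{(x_0^q)} = (S^{(q)})_{(x_0^q)}$, because $g/x_0^m = gx_0^{m(q-1)}/x_0^{qm}$ and a degree-zero fraction has numerator degree divisible by $q$. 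These identifications glue.

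3. Rescale the grading on $S^{(q)}$ by $1/q$, exactly as in (i). Renaming $x_0^q$ as a new variable $y_0$ identifies the result with $\mathbb{P}(\omega_0, \omega_1/q,\ldots,\omega_n/q)$.

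The only delicate point is the chart-level check in step 2. One must confirm that every degree-zero element of $S_{x_j}$ already lies in the localization of $S^{(q)}$, and that the charts $D_+(x_j)$ still cover after the substitution $x_0 \mapsto x_0^q$, which holds since $D_+(x_0) = D_+(x_0^q)$. Alternatively, one can cite the general statement $\rom{Proj}\, S \cong \rom{Proj}\, S^{(d)}$ (EGA II, 2.4.7) and reduce the proof to the computation in step 1.
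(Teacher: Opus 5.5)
Your proposal is correct and takes the paper's route. The paper only cites the Veronese isomorphism $\mathrm{Proj}\, S^{(q)} \simeq \mathrm{Proj}\, S$ and the isomorphism of the associated graded rings, and you supply the omitted details: the identification $S^{(q)} = \kk[x_0^q, x_1, \ldots, x_n]$ via $\gcd(q,\omega_0)=1$, and the chart-by-chart check (where, to be precise, it is the rewritten numerator $g\,x_0^{m(q-1)}$ of degree $qm\omega_0$, not $g$ itself, that lies in $S^{(q)}$).
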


\begin{proof}
	These follow from the Proj construction, the associated graded rings being isomorphic, and uses the \textit{q}th 
	\textit{Veronese embedding}: for $S$ graded ring, define $S^{(q)} = \bigoplus_{j \ge 0} S_{qj} $ be the (truncation) subring having as 
	$j$th graded part $S_{qj}$. Then $\rom{Proj }\, S^{(q)} \simeq \rom{Proj }\, S$ are canonically isomorphic.
\end{proof}

$\mathbb{P}(\omega_0, \ldots, \omega_n)$ is then said to be \textit{well-formed} if 
$gcd(\omega_0, \ldots, \widehat{\omega_i}, \ldots, \omega_n) = 1$ for all $i \in \{ 0, \ldots, n\}$.

\begin{prop}(See e.g. \cite[Ch $3,4,5$]{cox2011toric}) \label{propWeightProjDivClassGrp}
	The weighted projective space $\mathbb{P}(\omega_0, \ldots, \omega_n)$ with $\rom{gcd}(\omega_0, \ldots, \omega_n) = 1$ is a simplicial normal toric variety and its divisor class group 
	$\rom{Cl}(\mathbb{P}_\omega) \simeq \mathbb{Z}$. Furthermore $\rom{Pic}(\mathbb{P}_\omega) \subseteq \rom{Cl}(\mathbb{P}_\omega)$ maps to the subgroup $m\mathbb{Z} \subseteq \mathbb{Z}$ 
	where $m = \rom{lcm}(\omega_0, \ldots, \omega_n).$
\end{prop}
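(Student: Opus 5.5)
We realize $\mathbb{P}(\omega_0,\ldots,\omega_n)$ as a toric variety and read off its class group and Picard group from the fan. Assume $\gcd(\omega_0,\ldots,\omega_n)=1$. Let $N=\mathbb{Z}^{n+1}/\mathbb{Z}(\omega_0,\ldots,\omega_n)$, which is a lattice of rank $n$ because the weight vector is primitive. Let $u_i$ be the images of the standard basis vectors $e_i$, so that $\sum_i \omega_i u_i=0$. The fan $\Sigma$ has as maximal cones the $n+1$ cones spanned by all $u_j$ with $j\neq i$.

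First we check that $\Sigma$ is a complete simplicial fan. Each set $\{u_j\}_{j\ne i}$ is linearly independent over $\mathbb{Q}$: the only relation among all the $u_j$ is the one with coefficients $\omega$, and $\omega_i\neq 0$. Completeness holds because the $\omega_i>0$, so $0$ lies in the interior of the convex hull of the $u_i$. The associated toric variety $X_\Sigma$ is therefore normal and simplicial.

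Next we identify $X_\Sigma$ with $\operatorname{Proj} S$ through the Cox quotient construction. The ray generators $u_i$ are primitive by the standing well-formedness reduction of Lemma \ref{lemWProjSpaceProperties}; in general one can replace $\omega$ by a well-formed weight vector without changing the variety. The standard exact sequence
\[0\to M\to \mathbb{Z}^{n+1}\to \operatorname{Cl}(X_\Sigma)\to 0,\qquad m\mapsto (\langle m,u_i\rangle)_i,\]
applies since the $u_i$ span $N_\mathbb{R}$. The dual map sends $e_i^*$ to $\omega_i$: $M=\{a\in\mathbb{Z}^{n+1}: \sum a_i\omega_i=0\}$, so the cokernel is the image of $a\mapsto \sum a_i\omega_i$, which is $\mathbb{Z}$ since the gcd is $1$. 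Thus $\operatorname{Cl}\simeq\mathbb{Z}$ with $[D_i]\mapsto \omega_i$, and the Cox ring is $S$ graded by $\omega$. The irrelevant ideal is $(x_0,\ldots,x_n)$, so $X_\Sigma=(\mathbb{A}^{n+1}\setminus 0)/\mathbb{G}_m=\operatorname{Proj}S$.

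For the Picard group we use the criterion that a Weil divisor $D=\sum a_iD_i$ is Cartier if and only if on each maximal cone $\sigma_i$ there is an $m_i\in M$ with $\langle m_i,u_j\rangle=-a_j$ for all $j\ne i$. Equivalently, $D$ is linearly equivalent on the affine chart $U_{\sigma_i}$ to a divisor supported on $D_i$ alone. Since $\operatorname{Cl}(U_{\sigma_i})=\operatorname{Cl}(X)/\langle [D_j]:j\neq i\rangle$ and $[D_j]\mapsto\omega_j$, this gives $\operatorname{Cl}(U_{\sigma_i})\simeq \mathbb{Z}/\gcd_{j\ne i}(\omega_j)$. Under well-formedness that gcd is $1$, which is the wrong reading of the local condition. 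The correct local statement is that $D|_{U_{\sigma_i}}$ is principal, and $U_{\sigma_i}=\mathbb{A}^n/\mu_{\omega_i}$ has local class group $\mathbb{Z}/\omega_i$. Hence a class $k\in\mathbb{Z}$ is Cartier on $U_{\sigma_i}$ exactly when $\omega_i\mid k$. Concretely, $k$ is represented by $\frac{k}{\omega_i}D_i$, which is locally principal near the fixed point $p_i$ only when $\omega_i\mid k$. Intersecting over all $i$, $\operatorname{Pic}$ corresponds to $\bigcap_i\omega_i\mathbb{Z}=\operatorname{lcm}(\omega_0,\ldots,\omega_n)\mathbb{Z}$. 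Because $X$ is complete, $\operatorname{Pic}\to\operatorname{Cl}$ is injective. To finish, note that $\mathcal{O}(m)$, with $m$ the lcm, is locally trivialized on the chart $x_i\neq 0$ by $x_i^{m/\omega_i}$, which shows that every multiple of $m$ really does occur.

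The main obstacle is the local class group computation $\operatorname{Cl}(U_{\sigma_i})\simeq\mathbb{Z}/\omega_i$. We will do this by computing the index of the sublattice spanned by $\{u_j\}_{j\ne i}$ in $N$. That index is the multiplicity of $\sigma_i$, and it equals $\omega_i$ for well-formed weights. To see this, take the determinant of the $e_j$, $j\ne i$, modulo the relation $\omega$, which gives $|\omega_i|$ up to the gcd correction. Keeping the well-formedness reduction straight is the delicate point.
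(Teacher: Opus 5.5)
Your route is the standard toric computation carried out in the chapters of \cite{cox2011toric} cited in the statement: the fan, the Cox quotient, the sequence $0\to M\to\mathbb{Z}^{n+1}\to\operatorname{Cl}\to0$, and the Cartier criterion on maximal cones. For well-formed $\omega$ it is correct.

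The substantive problem is your sentence ``in general one can replace $\omega$ by a well-formed weight vector without changing the variety.'' That is harmless for the first two assertions (simplicial, normal, $\operatorname{Cl}\simeq\mathbb{Z}$ abstractly). It does not transport the Picard assertion, because the replacement changes both $\operatorname{lcm}(\omega)$ and the identification $[D_i]\mapsto\omega_i$. When only $\gcd(\omega)=1$ is assumed, one has $u_i=q_iv_i$ with $v_i$ primitive and $q_i=\gcd_{j\neq i}\omega_j$. Your class-group sequence and your Cartier criterion must both be run with the primitive generators $v_i$, so they see the reduced weights, not $\omega$. Concretely, $\mathbb{P}(1,2,2)\simeq\mathbb{P}(1,1,1)=\mathbb{P}^2$ by Lemma~\ref{lemWProjSpaceProperties}(ii), so $\operatorname{Pic}=\operatorname{Cl}\simeq\mathbb{Z}$, while $\operatorname{lcm}(1,2,2)=2$. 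So the last assertion is false under the printed hypothesis $\gcd(\omega)=1$ alone, and no reduction can rescue it. You should state well-formedness as a hypothesis of the Picard claim; that is exactly the case your argument proves.

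Separately, tidy up the local step, which you call the main obstacle but which is immediate. The formula $\operatorname{Cl}(U_{\sigma_i})=\operatorname{Cl}(X)/\langle[D_j]:j\neq i\rangle$ is wrong and should be deleted. Since $U_{\sigma_i}=X\setminus D_i=\{x_i\neq0\}$, the excision sequence gives $\operatorname{Cl}(U_{\sigma_i})=\operatorname{Cl}(X)/\mathbb{Z}[D_i]\simeq\mathbb{Z}/\omega_i\mathbb{Z}$, with no multiplicity computation. More directly still, the Cartier criterion you wrote down finishes the proof in one line. We have $M=\{v\in\mathbb{Z}^{n+1}:\sum_jv_j\omega_j=0\}$ and $\langle v,u_j\rangle=v_j$. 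So a $v\in M$ with $\langle v,u_j\rangle=-a_j$ for all $j\neq i$ exists if and only if $\omega_i$ divides $\sum_{j\neq i}a_j\omega_j$. Equivalently, $\omega_i$ divides $\sum_ja_j\omega_j$, which is the image of $[D]$ in $\mathbb{Z}$. Hence $D$ is Cartier if and only if $\operatorname{lcm}(\omega)$ divides that integer, which also makes your final $\mathcal{O}(m)$ check redundant.

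Finally, drop the ``concretely'' sentence. $D_i$ does not meet $U_{\sigma_i}$, so every multiple of it is trivially principal there; what matters is whether the class contains a divisor supported on $D_i$. Also, $\operatorname{Pic}\hookrightarrow\operatorname{Cl}$ holds on any normal variety, not because $X$ is complete.
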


\subsection{Module of logarithmic derivations for $\mathbb{P}^n_\omega$}

A reduced divisor $D = V(f) \subseteq \mathbb{P}^n_\omega$ in weighted projective space, where $f \in S_d$ a weighted homogeneous polynomial of degree $d$, has an associated reduced module of logarithmic derivations 
$$\rom{Der}_{\kk}(-\rom{log}\, D) = \{ \delta \in \rom{Der}_{\kk}(S) \, | \, \delta(f) \in Sf \},$$ 
which is similarly defined in (\ref{eqStandardDerLog}) over the standard graded ring $R$.
If the ring $S = \bigoplus_{i \ge 0} S_i$ has a nonstandard $\mathbb{Z}$-grading given by the weight vector $\omega$ of $\mathbb{P}^n_\omega$ (here we denote by $S = (S, \omega)$), then the graded free $S$-module 
$\rom{Der}_{\kk}(S)$ of derivations will have a nonstandard $\mathbb{Z}$-grading once we set the weighted degrees of the partial derivatives $\partial_{x_i}$ for $i = 0, 1, \ldots, n$. Once defined, this will naturally 
induce a nonstandard $\mathbb{Z}$-grading on the submodule
$\rom{Der}_{\kk}(-\rom{log}\, D)$. To achieve this, one needs to first 
consider the following canonical derivation:

\begin{definition} \label{defwEuler}
	The \textit{weighted Euler derivation} $\delta_{\mathcal{E}_\omega} = \sum^n_{i = 0} \omega_i x_i \partial_{x_i}$, associated to a weighted vector $\omega = (\omega_0, \ldots, \omega_n) \in \mathbb{Z}^{n+1}_{> 0}$, 
	satisfies that if
	$f \in S_d$ is any weighted homogeneous polynomial of degree $d$, then $\delta_{\mathcal{E}_\omega}(f) = df.$
\end{definition}

As an $S$-module, the module of derivations decomposes as $\rom{Der}_{\kk}(S) \cong \bigoplus^n_{i=0} S \, \partial_{x_i}$. 
One now makes a choice on the grading of the $S$-module $\rom{Der}_{\kk}(S)$:

\begin{definition} \label{defGradingDerS}
	\textit{The $\mathbb{Z}$-grading on the $S$-module $\rom{Der}_{\kk}(S)$} is given by the $\mathbb{Z}$-grading on the ring $S$ and 
	by the weighted degree of its generators, one defines as
	$$\rom{deg}_\omega(\partial_{x_i}) = 1 - \omega_i, \text{ where } 0 \le i \le n.$$
\end{definition}

Hence $\rom{Der}_{\kk}(S)$ decomposes as $\rom{Der}_{\kk}(S) \cong \bigoplus^n_{i=0} S(-v_i)$, where 
$v_i = \rom{deg}_\omega(\partial_{x_i})$. 

\begin{definition}
	Let $\rom{deg}_\omega(\delta) := \rom{max}\{\rom{deg}_\omega(g_i \partial_{x_i})\}_{0 \le i \le n},$ denote the \textit{weighted degree of a derivation} $\delta = \sum^n_{i=0} g_i \partial_{x_i} \in \rom{Der}_{\kk}(S)$  
	where 
	$\rom{deg}_\omega(g_i \partial_{x_i}) := \rom{deg}_\omega(g_i) + \rom{deg}_\omega(\partial_{x_i})$. 
	A derivation $\delta$ is \textit{weighted homogeneous of degree} $d$ if $\forall \, i \in \{0, \ldots, n\}$, $g_i$ is weighted homogeneous
	and $\rom{deg}_\omega(g_i \partial_{x_i}) = d$.
\end{definition}

\begin{remark}
	Throughout this paper and for developing the theory, we will assume the $\mathbb{Z}$-graded structure on the $S$-module $\rom{Der}_{\kk}(S)$ in Definition \ref{defGradingDerS}. It is a natural choice because with the 
	standard weights 
	$\omega_i = 1$ for all $i$, one recovers the pdegree of a derivation (see \cite[Definition $4.2$]{orlik_terao_1992arrangements}). 
	Furthermore, note that the weighted Euler derivation is weighted
	homogeneous of degree $1$ since for $i = 0, \ldots, n$, 
	$\rom{deg}_\omega(\omega_i x_i \partial_{x_i}) = \rom{deg}_\omega(\omega_i x_i) + \rom{deg}_\omega(\partial_{x_i}) = \omega_i + (1 - \omega_i) = 1$.
\end{remark}

With this $\mathbb{Z}$-graded structure on $\rom{Der}_{\kk}(S)$, the module of logarithmic derivations 
$\rom{Der}_{\kk}(-\rom{log}\, D)$ becomes a graded submodule as follows
$$\rom{Der}_{\kk}(-\rom{log}\, D)_m = \rom{Der}_{\kk}(-\rom{log}\, D) \cap \rom{Der}_{\kk}(S)_m \text{ for }m \in \mathbb{Z}.$$

\begin{lemma}
	Let $D$ be a reduced divisor in weighted projective space. Then $\rom{Der}_{\kk}(-\rom{log}\, D)$ is a graded module:
	$$\rom{Der}_{\kk}(-\rom{log}\, D) = \bigoplus_{m \in \mathbb{Z}} \rom{Der}_{\kk}(-\rom{log}\, D)_m.$$
\end{lemma}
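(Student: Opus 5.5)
We need to show that every $\delta \in \rom{Der}_{\kk}(-\rom{log}\, D)$ is a sum of weighted homogeneous components which themselves lie in $\rom{Der}_{\kk}(-\rom{log}\, D)$. The plan is to decompose $\delta$ with respect to the grading of Definition \ref{defGradingDerS}, apply each component to $f$, and compare weighted degrees.

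Write $\delta = \sum_{i=0}^n g_i \partial_{x_i}$. For each $m \in \mathbb{Z}$, let $\delta_m := \sum_i (g_i)_{m-1+\omega_i}\, \partial_{x_i}$, where $(g_i)_k$ denotes the weighted homogeneous component of $g_i$ of degree $k$ (zero if $k<0$). Each $\delta_m$ is weighted homogeneous of degree $m$, since $(m-1+\omega_i) + (1-\omega_i) = m$. We have $\delta = \sum_m \delta_m$, a finite sum, and this decomposition is unique because $\rom{Der}_{\kk}(S) \cong \bigoplus_i S(-v_i)$ is graded. It therefore suffices to show that $\delta_m(f) \in Sf$ for every $m$.

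The key observation is that a weighted homogeneous derivation of degree $m$ sends $S_k$ into $S_{k+m-1}$. Indeed, for a monomial $h$ of weighted degree $k$, the polynomial $\partial_{x_i} h$ is zero or of degree $k-\omega_i$, and multiplying by a form of degree $m-1+\omega_i$ gives degree $k+m-1$. Since $f \in S_d$, each $\delta_m(f)$ is weighted homogeneous of degree $d+m-1$. Hence $\delta(f) = \sum_m \delta_m(f)$ is exactly the decomposition of $\delta(f)$ into weighted homogeneous components.

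Now suppose $\delta(f) = hf$ with $h \in S$, and decompose $h = \sum_k h_k$. Since $f$ is weighted homogeneous, $hf = \sum_k h_k f$ is a decomposition into homogeneous pieces of degrees $k+d$. Comparing the components of degree $d+m-1$ gives $\delta_m(f) = h_{m-1} f \in Sf$. So $\delta_m \in \rom{Der}_{\kk}(-\rom{log}\, D)_m$, which proves the claim.

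The only point needing care is the bookkeeping of degrees. One must check that the shift $\rom{deg}_\omega(\partial_{x_i}) = 1-\omega_i$ makes all the terms $g_i \partial_{x_i} f$ land in the same degree. One must also note that components of negative degree vanish, so that $\delta_m = 0$ for $m$ very negative, namely $m < 1 - \max_i \omega_i$, and the sum is finite.
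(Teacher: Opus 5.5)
Your proof is correct and follows the same route as the paper. Both decompose $\delta$ into weighted homogeneous components, observe that each $\delta_m(f)$ is weighted homogeneous of degree $d+m-1$, and compare with the homogeneous decomposition of $hf$ to conclude $\delta_m(f)=h_{m-1}f\in Sf$. Your explicit description of the components via $(g_i)_{m-1+\omega_i}$ and your direct identification $\delta_m(f)=h_{m-1}f$ are slightly cleaner than the paper's two-case comparison.
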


\begin{proof}
	($\supseteq$) is clear by definition.\\
	($\subseteq$) Conversely let us denote $\rom{Der}(f) := \rom{Der}_{\kk}(-\rom{log}\, D)$ where 
	the divisor $D = V(f)$ is defined by the weighted homogeneous polynomial $f$ of degree $d$.
	Let $\delta \in \rom{Der}(f)$, then $\delta$ can be written $\delta = \sum_j \delta_j$ where 
	$\delta_j = \sum_{i = 0}^n g_{j,i} \partial_{x_i}$ are weighted homogeneous of degree $j$. Since  
	$\delta \in \rom{Der}(f)$, we have $\delta(f) = hf$ for some polynomial $h \in S.$ We can further write 
	$hf = \sum_l h_l f$ where $h_l$ are weighted homogeneous of degree $l$.
	
	On the other hand, we have $\delta(f) = \sum_j \delta_j(f)$ where for each $j$, $\delta_j(f)$ is weighted homogeneous of
	degree $j + d - 1$ or $0$: indeed
	$\delta_j(f) = \sum_{i = 0}^n g_{j,i} \frac{\partial f}{\partial x_i}$ and since $\rom{deg}_\omega(g_{j,i} \partial_{x_i}) = j$, then 
	$\rom{deg}_\omega(g_{j,i}) = j - (1 - \omega_i)$, hence 
	$\rom{deg}_\omega(g_{j,i} \frac{\partial f}{\partial x_i}) = j - (1 - \omega_i) + (d-\omega_i) = j + d - 1$ or $0$.
	
	Hence 
	$$\delta(f) = \sum_l h_l f = \sum_j \delta_j(f)$$
	and comparing terms of same weighted degree, we have two cases:
	\begin{equation*}
		\begin{cases}
			\delta_j(f) = h_l f, &\text{if } \rom{deg}_\omega(h_l) = j-1 \\
			\delta_j(f) = 0, &\text{else } \rom{deg}_\omega(h_l) \ne j-1
		\end{cases}
	\end{equation*}
	In both cases, $\delta_j \in \rom{Der}(f)_j$ and hence $\delta = \sum_j \delta_j \in \bigoplus_{m \in \mathbb{Z}} \rom{Der}_{\kk}(-\rom{log}\, D)_m$ as desired.
\end{proof}

$\rom{Der}_{\kk}(-\rom{log}\, D)$ decomposes as
\begin{equation} \label{eqWDer0}
	\rom{Der}_{\kk}(-\rom{log}\, D) = S\delta_{\mathcal{E}_\omega} \oplus \rom{Der}_0(-\rom{log}\, D),
\end{equation}
as in the case of standard projective space, when char($\kk$) does not divide $\rom{deg}_\omega(f)$, because for any $\delta \in \rom{Der}_{\kk}(-\rom{log}\, D)$, one can always write
$\delta = \widetilde{\delta} + (\frac{\delta(f)}{df})\delta_{\mathcal{E}_\omega}$
with $\delta_{\mathcal{E}_\omega}$ weighted Euler derivation and $\widetilde{\delta} \in \rom{Der}_0(-\rom{log}\, D).$

For $f \in S_d$ a weighted homogeneous polynomial of degree $d$, its associated module of logarithmic 
derivations $\rom{Der}_0(-\rom{log}\, D) =  \{\delta \in \rom{Der}_{\kk}(S) \, | \, \delta(f) = 0 \}$ can be defined as the kernel of the
map 
\begin{equation} \label{defKerWDer0}
	\nabla f = (\frac{\partial f}{\partial x_0}, \ldots, \frac{\partial f}{\partial x_n}) : \bigoplus_{i=0}^n S(\omega_i -1) \to S(d-1),
\end{equation}
and $\rom{Der}_{\kk}(-\rom{log}\, D)$ as the kernel of 
\begin{equation} \label{defKerWDer}
	\overline{\nabla f} = (\frac{\partial f}{\partial x_0}, \ldots, \frac{\partial f}{\partial x_n}) : \bigoplus_{i=0}^n S(\omega_i -1) \to \frac{S}{(f)}(d-1).
\end{equation}

By a shift of one in degree, the sheafifications of the graded modules 
$$\rom{Der}_{\kk}(-\rom{log}\, D)(1)^{\sim} = \mathcal{T}_\Sigma \langle D \rangle \text{ and } \rom{Der}_{0}(-\rom{log}\, D)(1)^{\sim} = \mathcal{T}_\Sigma \langle D \rangle_0 $$ are precisely the 
\textit{extended toric logarithmic sheaf} and \textit{toric logarithmic sheaf} respectively defined in 
\cite{faenzi2024logarithmicvectorfieldsfoliations} in more generality for a divisor on a simplicial toric variety.

\begin{remark}
	The shift in degree of $1$ happens because in \cite[Section $3$]{faenzi2024logarithmicvectorfieldsfoliations}, 
	$\nabla f : \bigoplus_{1 \le i \le r} \mathcal{O}_X(D_i) \to \mathcal{O}_X(\beta)$ where $\beta = \rom{deg}(f) \in \rom{Cl}(X)$ 
	while in our definition of $\nabla f$ (\ref{defKerWDer0}), 
	we would have $\nabla f : \bigoplus_{0 \le i \le n} \mathcal{O}_{\mathbb{P}^n_\omega}(-\rom{deg}_\omega(\partial_{x_i})) \to \mathcal{O}_{\mathbb{P}^n_\omega}(\rom{deg}_\omega(f)-1)$ 
	where $\rom{deg}_\omega(f)-1 \in \rom{Cl}(\mathbb{P}^n_\omega) \simeq \mathbb{Z}$, and similarly for $\overline{\nabla f}$.
\end{remark}

Hence by Proposition \ref{propWeightProjDivClassGrp} and Lemma \ref{lemWProjSpaceProperties}, our theory developed in the particular case of weighted projective spaces fits into the theory 
\cite{faenzi2024logarithmicvectorfieldsfoliations} on simplicial toric varieties. For instance, our decomposition of $\rom{Der}_{\kk}(-\rom{log}\, D)$ (\ref{eqWDer0}) up to a shift of $1$ in degree is 
precisely \cite[Proposition $3.3$]{faenzi2024logarithmicvectorfieldsfoliations} applied to weighted projective space. We have that
$\rom{Der}_{\kk}(-\rom{log}\, D)$, hence also $\rom{Der}_0(-\rom{log}\, D)$ are reflexive modules (see e.g. \cite[Proposition $3.2$]{faenzi2024logarithmicvectorfieldsfoliations}).

\begin{definition} \label{defwFree}
	A reduced divisor $D$ in weighted projective space $\mathbb{P}^n_\omega$ is \textit{free with weighted exponents $(a_1, \ldots, a_n)$} if  
	$$\rom{Der}_0(-\rom{log}\, D) = S \delta_{1} \oplus \ldots \oplus S \delta_{n} \cong \bigoplus^n_{i=1} S(-a_i)$$ is a free $S$-module, generated by a basis of $n$ of derivations $\delta_{1}, \ldots, \delta_{n}$, 
	with $ a_i = \rom{deg}_\omega(\delta_{i})$ for $1 \le i \le n$. By decomposition \eqref{eqWDer0}, this is equivalent to $\rom{Der}_{\kk}(-\rom{log}\, D) \cong \bigoplus^n_{i=0} S(-a_i)$ being a 
	free $S$-module with basis 
	consisting of the weighted Euler derivation $\delta_{\mathcal{E}_\omega}$ and $\delta_{1}, \ldots, \delta_{n}$.
\end{definition}

\begin{notation}
	From now on, the modules of logarithmic derivations will also be denoted by $\rom{Der}(f) := \rom{Der}_{\kk}(-\rom{log}\, D)$ and $ \rom{Der}_0(f) := \rom{Der}_0(-\rom{log}\, D)$ of a divisor $D$ 
	defined by a polynomial $f$, which will be made clear from the context.
\end{notation}

\begin{remark} \label{rmkDerIntersection}
	For $f_1, f_2 \in S$ any two polynomials, we have the following useful property:
	\begin{equation} \label{eqDerIntersection}
		\rom{Der}(f_1f_2) = \rom{Der}(f_1) \cap \rom{Der}(f_2).
	\end{equation}
	In particular, note that $\rom{Der}(f^m) = \rom{Der}(f)$ for any polynomial $f$ of degree $\ge 1$ and $m \ge 1$.
	Because $\rom{Der}(f) = \rom{Der}(f^\rom{red})$, where $f^\rom{red}$ denotes the reduced polynomial of $f$, 
	one is essentially interested in \textit{reduced divisors} when studying modules of logarithmic derivations. 
\end{remark}

Let $\nabla (f) $ be the (column) vector of partial derivatives of $f$.
We now state a determinantal characterization of free divisors, \textit{Saito's criterion} \cite{Saito1980}, in some general form 
 for a reduced polynomial $f$ in a polynomial ring in $n$ variables: 
$\rom{Der}(f)$ is free if and only if there exists an $n \times n$ matrix 
$$M = \begin{pmatrix}
	g_1^1 & g_2^1 & \ldots &g_n^1\\
	\vdots &&&\vdots\\
	g_1^n & g_2^n & \ldots &g_n^n
\end{pmatrix}$$
 of tensors associated to $n$ derivations in $\rom{Der}(f)$, i.e. $M\nabla (f) = 0$ modulo $f$, such that
$\rom{det}(M) = f.$ Such a matrix (or its transpose) will be called a \textit{Saito matrix} or sometimes called a discriminant or coefficient matrix.
 
\begin{remark} \label{remExistsBasisHomog}
	In (weighted) projective space, if the divisor is free, then there always exists a basis of (weighted) homogeneous derivations for the graded free module $\rom{Der}(f)$ of rank $n$ 
	(see e.g. \cite[Theorem A.$20$]{orlik_terao_1992arrangements}),
	from which the free exponents can be deduced.
\end{remark}

\begin{remark}
	It can be seen directly but also as an immediate consequence of Saito's criterion that $f \in S$ remains a free divisor in any polynomial ring $S[X]$ over $S$, that is a cone on $D = V(f)$ is also free, as pointed 
	out in \cite[Remark $2.5$]{Buchweitz_Conca_2012} and \cite[Section $4.4$]{faenzi2024logarithmicvectorfieldsfoliations}.
\end{remark}

\section{Weighted Eigenschemes} \label{secwME}

In this section, we introduce the weighted versions of the eigenscheme and Multiple eigenscheme, which will be used to prove our version of \cite[Proposition $3.3$]{digennaro2025saitostheoremrevisitedapplication} 
in the weighted projective space $\mathbb{P}^n_\omega$ of weight vector $\omega = (\omega_0, \omega_1, \ldots, \omega_n)$.

We will consider tensors associated to weighted homogeneous derivations. A tensor $T$ associated to a weighted homogeneous derivation $\delta = \sum_{j=0}^n g_j \partial_{x_j}$ of degree $d$
will be identified as a tuple $T = (g_0, g_1, \ldots, g_n) \in (Sym \, k^{n+1})^{\oplus {n+1}}$ where $\rom{deg}_\omega(g_j) = d - \rom{deg}_\omega(\partial_{x_j}) = d - (1 - \omega_j)$ by Definition \ref{defGradingDerS}.

\begin{definition} \label{defwME-scheme}
	Let $T$ be a tensor associated to a weighted homogeneous derivation of degree $d$. Then $T = (g_0, g_1, \ldots, g_n) \in \bigoplus_{j = 0}^{n}(Sym^{d_{j}}k^{n+1}) \subset (Sym \, k^{n+1})^{\oplus {n+1}},$ 
	where $d_{j} = d - (1 - \omega_j)$. The \textit{weighted eigenscheme} of $T$ is the closed subscheme $E(T) \subset \mathbb{P}^n_\omega$
	defined by the $2 \times 2$ minors of the homogeneous polynomial matrix
	$$M_{T} = \begin{pmatrix}
		\omega_0 x_0 & \omega_1 x_1 & \ldots& \omega_n x_n\\
		g_0 & g_1 & \ldots &g_n
	\end{pmatrix}.$$

	For $1 \le r \le n-1,$ let $T_1, \ldots, T_r$ be $r$ tensors associated to $r$ weighted homogeneous derivations of degree $d_1, \ldots, d_r$ respectively. 
	Then $T_i = (g_0^i, g_1^i, \ldots, g_n^i) \in \bigoplus_{j = 0}^{n}(Sym^{d_{i,j}}k^{n+1}) \subset (Sym \, k^{n+1})^{\oplus {n+1}},$ for $i=1, \ldots,r$ and $d_{i,j} := d_i - (1 - \omega_j)$. 
	The \textit{weighted Multiple eigenscheme} (wME-scheme)
	of $T_1, \ldots, T_r$ is the closed subscheme $E(T_1,\ldots, T_r) \subset \mathbb{P}^n_\omega$ defined by the $(r+1) \times (r+1)$ minors of the homogeneous polynomial matrix
	
	$$M_{T_1, \ldots, T_{r}} = \begin{pmatrix}
		\omega_0 x_0 & \omega_1 x_1 & \ldots& \omega_n x_n\\
		g_0^1 & g_1^1 & \ldots &g_n^1\\
		\vdots &&&\vdots\\
		g_0^r & g_1^r & \ldots &g_n^r
	\end{pmatrix}.$$
\end{definition}

\begin{remark}\label{rmkdeterminantalSaitoScheme}
	What we observed is that the schemes of interest in the interpretations of Saito's criterion \cite[Theorem $2.5$]{JV_Eigenscheme2024} and \cite[Proposition $3.3$]{digennaro2025saitostheoremrevisitedapplication} are 
	determinantal schemes of a more general type:
	consider a divisor in $\mathbb{P}^n$ defined by a homogeneous polynomial $f$ of degree $d$ and
	let $\{ \delta_1, \ldots, \delta_n\}$ be a $S$-linearly independent set of $n$ homogeneous derivations of $\rom{Der}(f)$ over $\kk[x_0, \ldots, x_n]$ and form the matrix of partially symmetric tensors corresponding to
	$\delta_1, \ldots, \delta_n$,
	$$M = \begin{pmatrix}
		g_0^1 & g_1^1 & \ldots &g_n^1\\
		\vdots &&&\vdots\\
		g_0^n & g_1^n & \ldots &g_n^n
	\end{pmatrix}.$$
	
	Then by Saito's criterion, the divisor is free if and only if $f \in I_M$, where $I_M$ is the ideal of maximal minors of $M$, since $f$ can be written as the determinant of a Saito matrix,
	 and the exponents of the free divisor are
	\begin{enumerate}
		\item $(d_1, \ldots, d_{n-1}, d - \sum_{i=1}^{n-1} d_i)$ if the Euler derivation $\delta_{\mathcal{E}} = c \delta_i$ for some $i \in \{1, \ldots, n\}$ and $c \in \kk^*$,
		\item $(d_1, \ldots, d_n)$ otherwise.
	\end{enumerate}
	If the Euler derivation is part of the independent set of $n$ derivations modulo a scalar, then we recover the eigenscheme or Multiple eigenscheme with the free exponents of (i), otherwise we 
	consider the more general determinantal scheme (note, it is not an eigenscheme/Multiple eigenscheme) with the same statement of \cite[Proposition $3.3$]{digennaro2025saitostheoremrevisitedapplication} but with 
	free exponents given in (ii). The proof is easy enough since one just need to add the Euler derivation. The main difficulty is
	in the eigenscheme or Multiple eigenscheme case.
\end{remark}

Based on Remark \ref{rmkdeterminantalSaitoScheme}, we see that Definition \ref{defwME-scheme} is the right one to consider in weighted projective space to prove a weighted version of 
\cite[Proposition $3.3$]{digennaro2025saitostheoremrevisitedapplication}. However, we want to
point out that the determinantal schemes defined by the maximal minors of the homogeneous polynomial matrices $M_{T}$ and $M_{T_1, \ldots, T_{r}}$ in Definition \ref{defwME-scheme} are in fact not eigenschemes in 
$\mathbb{P}^n_\omega$: the first row of the matrices corresponds to the weighted Euler derivation and a point $[x_0: x_1: \ldots: x_n]_\omega \in \mathbb{P}^n_\omega$ is not equal to 
$[\omega_0 x_0: \omega_1 x_1: \ldots: \omega_n x_n]_\omega$ since there are in general no $t \in \kk$ such that 
$[\omega_0 x_0: \omega_1 x_1: \ldots: \omega_n x_n]_\omega  = [t^{\omega_0} x_0: t^{\omega_1 x_1}: \ldots: t^{\omega_n} x_n]_\omega$. Therefore, one does not have the set theoretic interpretation via the rational 
map $\mathbb{P}^n_\omega \cdots\to \mathbb{P}^n_\omega, p \mapsto [g_0(p): \ldots : g_n(p)]_\omega$, since it is not an eigenscheme. Instead, the weighted Multiple eigenscheme can be described set theoretically as 
the closure of the union of points $[x_0: x_1: \ldots: x_n]_\omega$ such that the corresponding points $[\omega_0 x_0: \omega_1 x_1: \ldots: \omega_n x_n]_\omega$ are in the linear space spanned 
by  $[g^i_0(p): \ldots: g^i_n(p)]_\omega$ for $1 \le i \le r$. In the expected codimensions, the weighted eigenschemes and weighted Multiple eigenschemes of Definition \ref{defwME-scheme} 
and even the more general determinantal schemes in Remark \ref{rmkdeterminantalSaitoScheme} in weighted projective space have coordinate rings $S/I_{E(T)}, S/I_{E(T_1,\ldots, T_r)}$ and $S/I_{M}$ that are 
Cohen-Macaulay (see \cite[Theorem $2.7$]{DetRings_Bruns88}, \cite[Theorem $2.1.5$]{Bruns_Herzog_1998} or \cite{HochsterEagon1971}), their defining ideals saturated and are also standard determinantal schemes.\\

The following technical lemma shows how to compute the weighted degree of the determinant of a matrix of tensors corresponding to weighted homogeneous derivations; in particular,
when the matrix is a Saito matrix for a free divisor $V(f) \subset \mathbb{P}^n_\omega$, then it shows how $\rom{deg}_\omega(f)$ can be written in terms of $d_0, d_1, \ldots, d_n$, the degrees of derivations 
of a basis for $\rom{Der}(f)$. 

\begin{lemma} \label{lemwDegDetM}
	Let $\delta_0, \ldots, \delta_n$ be weighted homogeneous derivations of degree $d_0, \ldots, d_{n}$ and denote by
	 $T_i = ( g_0^i, g_1^i,\ldots ,g_n^i) \in \bigoplus_{j = 0}^{n}(Sym^{d_{i,j}}k^{n+1}) \subset (Sym \, k^{n+1})^{\oplus {n+1}},$ for $i=0, \ldots,n$ and $d_{i,j} = d_i - (1 - \omega_j)$, 
	 their associated tensors. Then the determinant of the matrix of tensors
	$$M= \begin{pmatrix}
		g_0^0 & g_1^0 & \ldots &g_n^0\\
		\vdots &&&\vdots\\
		g_0^n & g_1^n & \ldots &g_n^n
	\end{pmatrix} $$
	is a weighted homogeneous polynomial of $\rom{deg}_\omega ( \rom{det}(M) ) = \sum_{i = 0}^n d_i + \sum_{i = 0}^n(\omega_i - 1).$
\end{lemma}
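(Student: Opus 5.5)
The plan is to expand $\rom{det}(M)$ by the Leibniz formula and check that every term has the same weighted degree. We have
$$\rom{det}(M) = \sum_{\sigma \in \mathfrak{S}_{n+1}} \rom{sgn}(\sigma) \prod_{i=0}^n g^i_{\sigma(i)},$$
where the rows and columns are indexed by $\{0,\ldots,n\}$. By hypothesis each entry $g^i_j$ is weighted homogeneous of degree $d_{i,j} = d_i - (1-\omega_j) = d_i + \omega_j - 1$; if $g^i_j = 0$, the corresponding terms simply vanish. So each nonzero product $\prod_i g^i_{\sigma(i)}$ is weighted homogeneous, since a product of weighted homogeneous polynomials is weighted homogeneous with degrees adding.

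The key step is to compute the degree of one such product:
$$\sum_{i=0}^n d_{i,\sigma(i)} = \sum_{i=0}^n d_i + \sum_{i=0}^n (\omega_{\sigma(i)} - 1) = \sum_{i=0}^n d_i + \sum_{j=0}^n (\omega_j - 1).$$
The second equality uses that $\sigma$ is a bijection of $\{0,\ldots,n\}$, so reindexing by $j=\sigma(i)$ permutes the summands $\omega_j - 1$. Since this value does not depend on $\sigma$, all nonzero terms of the Leibniz expansion are weighted homogeneous of the same degree. Their signed sum $\rom{det}(M)$ is therefore weighted homogeneous of the stated degree, or else it is zero.

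There is no real obstacle here. The only care needed is to interpret the statement for $\rom{det}(M)=0$ (the zero polynomial being homogeneous of every degree, or excluded when the $\delta_i$ are independent), and to note that zero entries whose nominal degree is negative cause no problem. One could also observe that the result is the column-degree/row-degree additivity for graded matrices $\bigoplus_j S(\omega_j-1) \to \bigoplus_i S(d_i)$. In the Saito matrix case with $\delta_0=\delta_{\mathcal{E}_\omega}$, this gives $\rom{deg}_\omega(f) = 1 + \sum_{i\ge1} d_i + \sum_i(\omega_i-1)$, which is the form in which the lemma will be applied.
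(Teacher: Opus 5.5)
Your proposal is correct and follows essentially the same route as the paper: expand $\mathrm{det}(M)$ by the Leibniz formula and observe that each term $g^0_{\sigma(0)}\cdots g^n_{\sigma(n)}$ has weighted degree $\sum_i d_i + \sum_i(\omega_{\sigma(i)}-1)$, which is independent of $\sigma$ because $\sigma$ is a bijection. Your extra care about zero entries and a vanishing determinant, and your indexing of the symmetric group on $n+1$ letters (where the paper writes $S_n$), are minor refinements of the same argument.
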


\begin{proof}
	$$\rom{det}(M) = \sum_{\sigma \in S_n} \rom{sgn}(\sigma)  g_{\sigma(0)}^0 \cdots g_{\sigma(n)}^n,$$
	where $S_n$ denotes the symmetric group in $n$ elements, and $\rom{sgn}(\sigma)$, the signature of a permutation $\sigma.$
	For any permutations $\sigma \in S_n$, one has that
	\begin{align*}
		\rom{deg}_\omega ( g_{\sigma(0)}^0 \cdots g_{\sigma(n)}^n ) &= \sum_{i=0}^n \rom{deg}_\omega (g_{\sigma(i)}^i)\\
		&= \sum_{i=0}^n \left( d_i - \rom{deg}_\omega (\partial_{x_{\sigma(i)}}) \right) \\
		&= \sum_{i=0}^n \left( d_i - (1- \omega_{\sigma(i)}) \right) \\
		&= \sum_{i = 0}^n d_i + \sum_{i = 0}^n(\omega_i - 1).
	\end{align*}
	The second equality follows from the fact that the derivations are weighted homogeneous
	and the third equality is by Definition \ref{defGradingDerS}. This shows that $ \rom{det}(M)$ is a weighted homogeneous
	polynomial of degree $\sum_{i = 0}^n d_i + \sum_{i = 0}^n(\omega_i - 1).$
\end{proof}

\begin{remark} \label{remfreeDetSumwDeg}
	In particular, for a free divisor $V(f) \subset \mathbb{P}^n_\omega$ with basis $\delta_0, \ldots, \delta_n$ of weighted degree $d_0, \ldots, d_{n}$, then
	$$\rom{deg}_\omega(f) = \sum_{i = 0}^n d_i + \sum_{i = 0}^n(\omega_i - 1)$$ by Saito's criterion.
\end{remark}

We are now able to state the main theorem relating weighted Multiple eigenschemes to freness of divisors in weighted projective spaces.

\begin{theorem} \label{propMEscheme}
	Let $V(f) \subset \mathbb{P}^n_\omega$ be a reduced divisor of weighted degree $d$. Let $\delta_i = g_0^i \partial_{x_0} + g_1^i \partial_{x_1} + \ldots + g_n^i \partial_{x_n}$, for 
	$i = 1, \ldots, n-1$, be $n-1$ generators of $\rom{Der}(f)$ of weighted degree $d_1 \le \ldots \le d_{n-1}$ with respective tensors 
	$T_i = ( g_0^i, g_1^i,\ldots ,g_n^i) \in  (Sym \, \kk^{n+1})^{\oplus {n+1}}$ such that $E(T_1, \ldots, T_{n-1})$ is in codimension $2$.
	Then $V(f)$ is free with weighted exponents $$(d_1, \ldots ,d_{n-1}, d + \sum_{i=0}^n (1-\omega_i) - d_1 - \ldots - d_{n-1} - 1)$$ if and only if $f \in I_{E(T_1, \ldots, T_{n-1})}$ is in the ideal of the 
	weighted Multiple eigenscheme of the tensors.
\end{theorem}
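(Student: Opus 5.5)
The plan is to adapt the proof of \cite[Proposition $3.3$]{digennaro2025saitostheoremrevisitedapplication} (itself modelled on \cite[Theorem $2.5$]{JV_Eigenscheme2024}), with the weighted Euler derivation $\delta_{\mathcal{E}_\omega}$ in place of the Euler derivation. Saito's criterion, in the form stated before Remark \ref{remExistsBasisHomog}, reduces freeness to the existence of a last derivation $\delta_n$ for which the matrix with rows $\delta_{\mathcal{E}_\omega},\delta_1,\ldots,\delta_{n-1},\delta_n$ has determinant a nonzero scalar multiple of $f$.

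For the direction ($\Leftarrow$), suppose $f\in I_{E(T_1,\ldots,T_{n-1})}$. Expanding a would-be determinant along its last row, a row $(h_0,\ldots,h_n)$ gives determinant $\sum_j h_j\Delta_j$, where the $\Delta_j$ are the signed maximal minors of $M_{T_1,\ldots,T_{n-1}}$. These signed minors generate $I_{E(T_1,\ldots,T_{n-1})}$, so we can write $f=\sum_j h_j\Delta_j$. Since every $\Delta_j$ is weighted homogeneous, the $h_j$ may be taken weighted homogeneous of degree $d-\deg_\omega\Delta_j$. Setting $\delta_n=\sum_j h_j\partial_{x_j}$ gives a weighted homogeneous derivation with $\det(M)=f$.

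It remains to check $\delta_n\in\rom{Der}(f)$, which is where the Saito adjugate argument enters. Let $N$ be the square matrix with rows $\delta_{\mathcal{E}_\omega},\delta_1,\ldots,\delta_n$. The first $n$ rows of $N$ annihilate $\nabla f$ modulo $f$, so $N\nabla f\equiv(0,\ldots,0,\delta_n(f))^T \pmod f$. Multiplying by $\rom{adj}(N)$ gives $f\,\partial_{x_j}f\equiv \pm\,\delta_n(f)\,\Delta_j \pmod f$, hence $\delta_n(f)\Delta_j\in(f)$ for every $j$. Because $f$ is reduced and the $\Delta_j$ cut out a codimension-$2$ locus, no irreducible factor of $f$ divides all the $\Delta_j$. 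For each prime factor $p$ of $f$ we therefore get $p\mid\delta_n(f)$, and since $f$ is squarefree, $f\mid\delta_n(f)$.

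The rows of $N$ are independent because $\det N=f\neq 0$. Saito's criterion then shows $V(f)$ is free, with basis $\delta_{\mathcal{E}_\omega},\delta_1,\ldots,\delta_n$. The exponent of $\delta_n$ comes from Lemma \ref{lemwDegDetM} with $d_0=1$:
\[
d=1+d_1+\cdots+d_{n-1}+\deg_\omega\delta_n+\sum_{i=0}^n(\omega_i-1).
\]
This gives $\deg_\omega\delta_n=d+\sum_{i=0}^n(1-\omega_i)-d_1-\cdots-d_{n-1}-1$, the claimed value.

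For the direction ($\Rightarrow$), assume $V(f)$ is free with the stated exponents. I would show that $\delta_{\mathcal{E}_\omega},\delta_1,\ldots,\delta_{n-1}$ extend to a homogeneous basis of $\rom{Der}(f)$. Saito's criterion then yields a $\delta_n$ with $\det N=cf$, $c\in\kk^*$, and expanding along the last row puts $f$ in the ideal of minors. To get the extension, I would first invoke the decomposition \eqref{eqWDer0} to replace each $\delta_i$ by its component in $\rom{Der}_0(f)$, which changes $M_{T_1,\ldots,T_{n-1}}$ only by row operations that add multiples of the first row, so the minor ideal is unchanged. The aim is then to show that the resulting $\delta_1,\ldots,\delta_{n-1}$ form part of a basis of the free module $\rom{Der}_0(f)\cong\bigoplus S(-a_i)$.

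This extension is the main obstacle. Here the codimension-$2$ hypothesis must enter: the quotient $\rom{Der}_0(f)/\langle\delta_1,\ldots,\delta_{n-1}\rangle$ has rank one, and the codimension-$2$ condition on the maximal minors should make it torsion-free, hence reflexive of rank one over a UFD, hence $\cong S(-a)$. Computing $a$ via Lemma \ref{lemwDegDetM} and matching it against the given exponents would then complete the basis. I would follow the argument of \cite{digennaro2025saitostheoremrevisitedapplication} verbatim at this point, checking that the grading plays no role beyond the degree bookkeeping.
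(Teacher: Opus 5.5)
Your ($\Leftarrow$) direction is correct and follows the paper's proof: adjugate identity, then $f\mid\delta_n(f)$, then the degree count via Lemma~\ref{lemwDegDetM}. Your prime-by-prime deduction of $f\mid\delta_n(f)$ from $\gcd(\Delta_j)=1$ is in fact more careful than the paper's wording.

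The gap is in ($\Rightarrow$), at exactly the step you flag. The paper itself only asserts there that codimension $2$ makes $\delta_{\mathcal{E}_\omega},\delta_1,\ldots,\delta_{n-1}$ part of a basis. Your justification, ``torsion-free, hence reflexive of rank one over a UFD, hence $\cong S(-a)$'', is false. A torsion-free rank-one module over a UFD is only isomorphic to an ideal, and it is reflexive only if that ideal is principal.

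Concretely, after your reduction to $\rom{Der}_0(f)$, write $\delta_i=\sum_j c_{ij}\epsilon_j$ in a homogeneous basis $\epsilon_1,\ldots,\epsilon_n$ of $\rom{Der}_0(f)$ of degrees $a_1,\ldots,a_n$, and let $C=(c_{ij})$. By Cauchy--Binet, $I_{E(T_1,\ldots,T_{n-1})}\subseteq I_{n-1}(C)$, so the maximal minors $v_j$ of $C$ have no common factor. Hilbert--Burch then identifies your quotient with a shift of $I_{n-1}(C)$. That ideal is torsion-free, but it is free only when $I_{n-1}(C)=S$.

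You use the exponent hypothesis only after concluding freeness, so your argument proves too much. Take $f=xyz$ in $\mathbb{P}^2$ with basis $\epsilon_1=x\partial_x-y\partial_y$, $\epsilon_2=x\partial_x-z\partial_z$. The derivation $\delta_1=x\epsilon_1+y\epsilon_2$ has $I_{E(T_1)}=(xy(2x+y),\,xz(x+2y),\,yz(x-y))$, which has codimension $2$. Yet $\rom{Der}_0(f)/S\delta_1\cong(x,y)$ up to shift, and this is not free.

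The exponent hypothesis is exactly what closes the gap. The minor $v_j$ has degree $a_j-d_n$, so Hilbert--Burch gives a graded resolution $0\to\bigoplus_{i=1}^{n-1}S(d_n-d_i)\to\bigoplus_{j=1}^{n}S(d_n-a_j)\to I_{n-1}(C)\to 0$. The multiset equality $\{a_1,\ldots,a_n\}=\{d_1,\ldots,d_{n-1},d_n\}$ then makes the Hilbert series of $I_{n-1}(C)$ equal to $t^{-d_n}(\sum_j t^{a_j}-\sum_i t^{d_i})H_S(t)=H_S(t)$, the Hilbert series of $S$. Hence $I_{n-1}(C)=S$, so some degree-zero minor $v_{j_0}$ is a nonzero constant. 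Therefore $\delta_{\mathcal{E}_\omega},\delta_1,\ldots,\delta_{n-1},\epsilon_{j_0}$ is a basis of $\rom{Der}(f)$. Expanding its Saito determinant along the last row gives $f\in I_{E(T_1,\ldots,T_{n-1})}$.
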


\begin{proof}
	Suppose $V(f)$ is free with weighted exponents $(d_1, \ldots ,d_{n-1}, d + \sum_{i=0}^n (1-\omega_i) - d_1 - \ldots - d_{n-1} - 1).$ Since 
	$E(T_1, \ldots, T_{n-1})$ has codimension $2$, $\delta_i$ for $i \in \{1, \ldots, n-1\}$ together with weighted Euler derivation 
	$\delta_{\mathcal{E}_\omega}$ form part of a basis for $\rom{Der}(f)$, and since $V(f)$ is free with the given exponents, then there exists a derivation $\mu$ of
	degree $d + \sum_{i=0}^n (1-\omega_i) - d_1 - \ldots - d_{n-1} - 1$ such that $\delta_{\mathcal{E}_\omega}, \delta_1, \ldots ,\delta_{n-1}, \mu$ is a basis
	for $\rom{Der}(f)$. By Saito's criterion, the determinant of its Saito matrix $$\rom{det}[\delta_{\mathcal{E}_\omega}, \delta_1, \ldots ,\delta_{n-1}, \mu]= uf,$$ where $u$ is a unit and w.l.o.g. one can take $u = 1$.
	Note that the Saito matrix with the row corresponding to the derivation $\mu$ deleted, is exactly the matrix $M_{T_1, \ldots, T_{n-1}}$ of the wME-scheme, whose maximal
	minors are the generators of $I_{E(T_1, \ldots, T_{n-1})}$. By calculating the determinant of the Saito matrix by expanding along the row of $\mu$ implies $f \in I_{E(T_1, \ldots, T_{n-1})}$.
	
	Conversely, if $f \in I_{E(T_1, \ldots, T_{n-1})}$, then there exists a tensor $T_n = ( g_0^n, g_1^n,\ldots ,g_n^n)$ such that the corresponding derivation
	$\mu = \sum_{i = 0}^n g_i^n \partial_{x_i}$ is weighted homogeneous. When $T_n$ is added to the ME-scheme matrix $M_{T_1, \ldots, T_{n-1}}$, 
	computing the determinant of the resulting matrix gives $f$. 
	Next we show that $\mu \in \rom{Der}(f)$, which by Saito's criterion would imply that $\rom{Der}(f)$ is free with 
	$\delta_{\mathcal{E}_\omega}$, $\delta_i$ for $i \in \{1, \ldots, n-1\}$ and $\mu$ as basis. 
	One finds that because of our assumption on the codimension of $E(T_1, \ldots, T_{n-1})$ and Lemma \ref{lemGcdCodim} below that a similar argument as in the proof of 
	\cite[Proposition $3.3$]{digennaro2025saitostheoremrevisitedapplication} and \cite[Theorem $2.5$]{JV_Eigenscheme2024} works: 
	first, add the tensor $T_n$ corresponding to derivation $\mu$ as a new row of the matrix $M_{T_1, \ldots, T_{n-1}}$ and denote the resulting matrix by $M$. Denote also $CoM^T$, the transpose of the matrix of 
	cofactors of $M$. Then $$CoM^T \ M = fI,$$ and multiplying by $\nabla(f)$, the column vector of partial
	derivatives of $f$, we have 
	$$CoM^T \ M \nabla(f) = CoM^T [df, fK_1, \ldots, fK_{n-1}, \mu(f)]^T,$$ where $K_i \in \kk[x_0, \ldots, x_n] =:S,$ 
	and obtain a system of equations $$\{m_{Q_i}\mu(f) = f h_i\}_{i =0, \ldots, n},$$ where the $m_{Q_i}$'s are the generators of 
	$I_{E(T_1, \ldots, T_{n-1})}$ and $h_i \in S$ for $i =0, \ldots, n$. Since $E(T_1, \ldots, T_{n-1})$ has codimension $2$, by Lemma \ref{lemGcdCodim} below, 
	$$\rom{gcd}(m_{Q_1}, \ldots, m_{Q_{n+1}}) = 1,$$ and this implies that $f$ does not divide all the generators $m_{Q_i}$ of $I_{E(T_1, \ldots, T_{n-1})}$, hence
	$f \,|\, \mu(f)$. Therefore $\rom{Der}(f)$ is free and by Remark \ref{remExistsBasisHomog}, there exists a basis of weighted homogeneous derivations of degree $d_0, d_1, \ldots, d_{n-1}, d_n$, where 
	$d_0 = \rom{deg}_\omega(\delta_{\mathcal{E}_\omega}) = 1$, $d_1, \ldots, d_{n-1}$ as given, and $d_n := \rom{deg}_\omega(\mu).$ By Lemma \ref{lemwDegDetM} and Remark \ref{remfreeDetSumwDeg}, 
	$$d =\rom{deg}_\omega(f) = \sum_{i = 0}^n d_i + \sum_{i = 0}^n(\omega_i - 1),$$ and hence
	$$d_n = d + \sum_{i=0}^n (1-\omega_i) - (\sum_{i = 1}^{n-1} d_i) - 1.$$
\end{proof}	

An easy criteria to check if the weighted ME-scheme $E(T_1, \ldots, T_{n-1}) \subset \mathbb{P}^n_\omega$ of $n-1$ tensors is in codimension $2$ is the following lemma.

\begin{lemma} \label{lemGcdCodim}
	With the above notations, let $I_{E(T_1, \ldots, T_{n-1})} = (m_{Q_1}, \ldots, m_{Q_{n+1}})$ be the ideal of $n \times n$ minors of $M_{T_1, \ldots, T_{n-1}}$, the matrix of weighted ME-scheme. 
	Then $\rom{gcd}(m_{Q_1}, \ldots, m_{Q_{n+1}}) = 1$ if and only if $\rom{codim}(I_{E(T_1, \ldots, T_{n-1})}) = 2.$
\end{lemma}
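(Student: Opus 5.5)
The plan is to reduce the statement to two standard facts about the polynomial ring $S=\kk[x_0,\ldots,x_n]$: it is a UFD, and ideals of maximal minors satisfy the Eagon--Northcott height bound. Throughout, $I:=I_{E(T_1,\ldots,T_{n-1})}$ denotes the ideal of $n\times n$ minors of the $n\times(n+1)$ matrix $M_{T_1,\ldots,T_{n-1}}$, and codimension means height in $S$. (The weighted grading plays no role beyond ensuring that $I$ is a proper homogeneous ideal.)

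\textbf{Step 1: $I$ is proper and $\rom{codim}(I)\le 2$.} Expand each maximal minor $m_{Q_k}$ along the first row $(\omega_0x_0,\ldots,\omega_nx_n)$. This writes $m_{Q_k}$ as an $S$-linear combination of the $x_j$, so $I\subseteq (x_0,\ldots,x_n)$ and $I$ is a proper ideal. By the Eagon--Northcott bound, i.e.\ the generalized Krull principal ideal theorem for determinantal ideals (\cite{DetRings_Bruns88}, \cite{Bruns_Herzog_1998}), the ideal of maximal minors of a $p\times q$ matrix has height at most $q-p+1$ whenever it is proper. Here $p=n$ and $q=n+1$, so $\rom{codim}(I)\le 2$. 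Hence $\rom{codim}(I)=2$ is equivalent to $\rom{codim}(I)\ge 2$.

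\textbf{Step 2: the UFD argument.}
\begin{itemize}
\item Suppose $g=\rom{gcd}(m_{Q_1},\ldots,m_{Q_{n+1}})$ is not a unit. This covers $g=0$, which happens exactly when all minors vanish and then $\rom{codim}(I)=0$. For $g\neq 0$, take an irreducible factor $p$ of $g$. Then $I\subseteq (p)$, and $(p)$ is a prime of height one by Krull's principal ideal theorem. So $\rom{codim}(I)\le 1$.
\item Conversely, suppose $\rom{codim}(I)\le 1$. Either $I=0$, in which case the gcd is $0$, or $I$ is contained in a prime $P$ of height one. In a UFD every height-one prime is principal, $P=(p)$ with $p$ irreducible. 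Then $p$ divides every $m_{Q_k}$, so the gcd is not $1$.
\end{itemize}
Together these show that $\rom{gcd}=1$ if and only if $\rom{codim}(I)\ge 2$. Combined with Step 1, this is equivalent to $\rom{codim}(I)=2$.

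\textbf{Expected obstacle.} The only subtle point is the upper bound in Step 1. Without it, one only gets $\rom{codim}\ge 2$, and one also has to exclude the possibility that $I$ is the unit ideal. The observation that the first row of $M_{T_1,\ldots,T_{n-1}}$ consists of (multiples of) the variables settles properness directly, after which the Eagon--Northcott bound applies verbatim in the nonstandard graded setting, since it is a statement about ideals in a Noetherian ring and does not depend on the grading.
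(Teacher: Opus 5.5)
Your proof is correct and follows the same route as the paper: bound the codimension above by $2$, then identify codimension $\le 1$ with the maximal minors being all zero or having a non-constant common factor. Your version is more careful than the paper's. You check that the ideal is proper, and you get the upper bound from the Eagon--Northcott height bound, which is the right tool here (the paper cites Giambelli--Thom--Porteous, a statement about expected codimension). You also justify the codimension-one case by the fact that height-one primes in a UFD are principal, where the paper only asserts it.
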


\begin{proof}
	Let $S = \kk[x_0, \ldots, x_n].$ By the Giambelli–Thom–Porteous formula (see \cite[Ch $12.1$]{eisenbud20163264}), the expected codimension 
	$\rom{codim}(I_{E(T_1, \ldots, T_{n-1})}) = 2$, else $\rom{codim}(I_{E(T_1, \ldots, T_{n-1})}) \le 2$. We have 
	$\rom{codim}(I_{E(T_1, \ldots, T_{n-1})})  = 0$ if and only if $I_{E(T_1, \ldots, T_{n-1})} = (0)$ i.e. $\rom{gcd}(m_{Q_1}, \ldots, m_{Q_{n+1}}) = 0$, and
	$\rom{codim}(I_{E(T_1, \ldots, T_{n-1})})  = 1$ if and only if $I_{E(T_1, \ldots, T_{n-1})} = (hm'_{Q_1}, \ldots, hm'_{Q_{n+1}})$ for some non-constant polynomial $h$,
	i.e. $\rom{gcd}(m_{Q_1}, \ldots, m_{Q_{n+1}}) =h$.
	Hence $\rom{codim}(I_{E(T_1, \ldots, T_{n-1})})  = 2$ if and only if $\rom{gcd}(m_{Q_1}, \ldots, m_{Q_{n+1}}) = 1$.
\end{proof}

\begin{example} \label{exWWhitney}
	Consider $f = x^a - y^b z^c \in \kk[x,y,z]$ for positive integers $a,b,c$. Observe that $f$ is weighted homogeneous for all possible choices 
	of positive integers $a,b,c$: 
	take weight vector 
	$\omega = (2bc, ac, ab)$ for instance, then $\rom{deg}_{\omega}(f) = 2abc$. Hence one can view $f$ as the defining polynomial of a divisor in 
	$\mathbb{P}^2_\omega$. By considering the weighted eigenscheme of the tensor $T_1 = (bx, ay, 0)$ associated to derivation 
	$\delta_1 = bx \partial_x + ay \partial_y$, which is in $\rom{Der}(f)$, one has that the ideal of maximal minors of
	$$M_{T_1} = \begin{pmatrix}
		2bc x & acy & abz\\
		bx & ay & 0
	\end{pmatrix} \ \text{is}\ I_{E(T_1)} = (yz, xz, xy), \ \text{which defines a finite scheme}$$
	$$E(T_1) = \{[1:0:0]_\omega, [0:1:0]_\omega, [0:0:1]_\omega \} \subset \mathbb{P}^2_\omega.$$
	Since $f \notin I_{E(T_1)},$ by Theorem \ref{propMEscheme} the divisor $V(f)$ is not free with weighted exponents $(1, 2abc - 2bc -ac-ab+1)$,
	and in fact $V(f)$ is not free.
	
	However, $zf \in I_{E(T_1)}$ and one checks easily that $\delta_1 \in \rom{Der}(zf)$, so Theorem \ref{propMEscheme} says that 
	$$\text{divisor} \ V(zf)\  \text{is free with weighted exponents}\ (1, 2abc - 2bc -ac+1).$$ 
	As a scheme, $V(zf) \supset E(T_1)$ contains the weighted eigenscheme of $T_1$ if
	and only if the principal ideal $(zf) \subset I_{E(T_1)}$. In the case of the divisor $V(f)$, the point $[1:0:0]_\omega \notin V(f).$
	
	A parallel situation is to instead consider the weighted eigenscheme of the tensor $T_2 = (cx, 0, az)$ associated to derivation 
	$\delta_2 = cx \partial_x + az \partial_z$. The defining ideal is also $I_{E(T_2)} = (yz, xz, xy),$ so it defines the same scheme as $E(T_1).$
	Moreover, observe that $yf \in I_{E(T_2)}$ and that $\delta_2 \in \rom{Der}(yf)$, so one can apply Theorem \ref{propMEscheme}, which shows that 
	$$\text{divisor} \ V(yf )\  \text{is free with weighted exponents}\ (1, 2abc - 2bc -ab+1).$$
\end{example}

\begin{remark}
	Consider $V(f)$ now as a divisor in affine space $\mathbb{A}^3_{\kk}$. When $a = b = 2, c=1$, then $V(f)$ is the Whitney umbrella, which can be seen
	over the reals, $\kk = \mathbb{R}$ as a ruled surface, self-intersecting along a half-line and a pinch at the origin. The Whitney umbrella is not a free divisor,
	but ajoin it with a plane $z=0$ or $y=0$, then the resulting divisor is free as shown implicitly in Example \ref{exWWhitney}; the next section discusses 
	the viewpoint of weighted projective space and affine space in regard to freeness. When $a = 2, b = c = 1$, then $V(yf)$ is the quadratic cone ajoint a plane,
	a free divisor (c.f. \cite[Example $3.7$]{Buchweitz_Conca_2012} and see \cite{mond-schulze2013}).
\end{remark}

\section{From free divisors in $\mathbb{P}_\omega^{n-1}$ to free divisors in $\mathbb{A}^{n}$ and the Cone Construction for $\mathbb{P}^{n}$} 
\label{secwFreeDiv}

In the case that one starts with a free divisor $D$ in $\mathbb{P}_\omega^{n-1}$, defined by a weighted homogeneous polynomial $F$ over $R := \kk[x_1, \ldots, x_{n}]$, then the $\mathbb{Z}$-graded
$R$-module $\rom{Der}(F) := \rom{Der}_{\kk}(-\rom{log}\, D)$ is a free $R$-module by definition. Hence forgetting the grading on the $R$-module, $\rom{Der}(F)$ is still a free $R$-module. One can
now just view $F$ as a polynomial in the affine coordinate ring $\kk[x_1, \ldots, x_{n}]$ and since $\rom{Der}(F)$ is free, the divisor in $\mathbb{A}^{n}$ defined by $F$ is free. We state this fact in the
 following lemma.
 
\begin{lemma} \label{lemFreeWeightedEqualFreeAffine}
	Given a free divisor in weighted projective $(n-1)$-space defined by the polynomial $F$, the corresponding divisor in affine $n$-space defined by $F$ is also free.
\end{lemma}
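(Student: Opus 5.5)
The plan is to observe that the two modules of logarithmic derivations in question are literally the same submodule of the same free module, and that only the grading differs. Write $R = \kk[x_1, \ldots, x_n]$ and let $F \in R$ be weighted homogeneous with respect to some weight vector $\omega$, defining a reduced free divisor $D \subset \mathbb{P}^{n-1}_\omega$. The module $\rom{Der}(F) = \{\delta \in \rom{Der}_{\kk}(R) \mid \delta(F) \in RF\}$ is defined purely in terms of the ring $R$ and the element $F$; the grading from Definition \ref{defGradingDerS} is extra structure placed on it afterwards. The affine divisor $V(F) \subset \mathbb{A}^n$ has module of logarithmic derivations given by exactly the same set, namely derivations $\delta$ of $\kk[x_1,\ldots,x_n]$ with $F \mid \delta(F)$.

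The steps are as follows. First, I will check that $F$ is reduced as an element of the affine ring, so that the affine divisor is reduced. This holds because reducedness of $D$ is a statement about $F$ as a polynomial, and in any case Remark \ref{rmkDerIntersection} shows that $\rom{Der}(F) = \rom{Der}(F^{\rom{red}})$.

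Second, I will use Definition \ref{defwFree} together with the decomposition \eqref{eqWDer0}. Freeness in $\mathbb{P}^{n-1}_\omega$ gives an $R$-basis $\delta_{\mathcal{E}_\omega}, \delta_1, \ldots, \delta_{n-1}$ of $\rom{Der}(F)$. Forgetting the grading, these $n$ elements still generate $\rom{Der}(F)$ and are still $R$-linearly independent, since linear independence does not depend on the grading. Hence $\rom{Der}(F)$ is a free $R$-module of rank $n$, which is exactly affine freeness of $V(F)$ in the sense of Saito \cite{Saito1980}.

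As an alternative check, I will use Saito's criterion directly. The Saito matrix built from this basis has determinant $uF$ with $u \in \kk^*$, by Saito's criterion as stated in the paper and Lemma \ref{lemwDegDetM}. Each row $\delta$ satisfies $\delta(F) \in RF$. The criterion for the affine divisor uses these same conditions, so it applies verbatim.

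The only point requiring care is that the definitions match. Affine freeness uses the full module $\rom{Der}_{\kk}(-\rom{log}\, D)$, not $\rom{Der}_0$. So I must pass through \eqref{eqWDer0} to include the weighted Euler derivation, which needs $\rom{char}(\kk) = 0$ or at least $\rom{char}(\kk) \nmid \rom{deg}_\omega F$. In particular, freeness of $\rom{Der}_0(F)$ with $n-1$ generators must be upgraded to freeness of $\rom{Der}(F)$ with $n$ generators before the grading is forgotten. Beyond this, I expect no real obstacle.
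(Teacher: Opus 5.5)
Your proposal is correct and follows essentially the same argument as the paper: $\rom{Der}(F)$ is the same submodule of $\rom{Der}_{\kk}(R)$ in both settings. It is free by Definition \ref{defwFree}, which already packages the passage from $\rom{Der}_0(F)$ to $\rom{Der}(F)$ via \eqref{eqWDer0}, and forgetting the grading preserves freeness. Your remarks on reducedness and on the characteristic assumption behind \eqref{eqWDer0} are additional care that the paper leaves implicit.
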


\begin{remark}
	One can pay particular attention to the homogeneous case of free divisors in $\mathbb{P}_\omega^{n-1}$, where the weights are all the same. These are free divisors in standard
	projective space $\mathbb{P}^{n-1}$.
\end{remark}

\subsection{Cone Construction for $\mathbb{P}^{n}$} \label{secConeConstruct}

From a free divisor in weighted projective space $\mathbb{P}_\omega^{n-1}$, one gets (i) a corresponding free 
divisor in affine space $\mathbb{A}^{n}$ and (ii) can construct a divisor in standard projective space $\mathbb{P}^{n}$ 
by the \textit{cone construction}, which is similar to taking the cone over a hyperplane arrangement (see \cite[Definition $1.15$]{orlik_terao_1992arrangements}), and was also mentioned in \cite{st2014}
The construction is as follows:

Projective $n$-space $\mathbb{P}^{n}$ has coordinate ring $\kk[x_0, \ldots, x_{n}]$, so $F$ as above,
\begin{enumerate}
	\item  we homogenize $F$ with respect to the new variable $x_0$ and denote it by $F^h$,
	\item then consider the divisor in $\mathbb{P}^{n}$ defined by the homogeneous polynomial $x_0 F^h$.
\end{enumerate}

\subsection{Homogenization and Freeness in projective space}

As just demonstrated, we denote by $\_^h$  the homogenization w.r.t. the variable $x_0$ in $\kk[x_0, ... , x_n]$, which when applied to a derivation $\delta= \sum_i  P_i \partial_{x_i}$, 
by abuse of notation we mean that $\delta^h$ is a homogeneous derivation, i.e. $\rom{deg}\, P_i = d$ for all $i$ and some fixed $d \in \mathbb{N}.$ Dehomogenizing a homogeneous polynomial $f$ or 
derivation $\delta$ in $x_0$, i.e. $f_{|_{x_0 = 1}} := f(1,x_1, \ldots, x_n)$ will be denoted $f_{|_{x_0 = 1}}$ and $\delta_{|_{x_0 = 1}}$ respectively.

\begin{lemma} \label{lemHomogPolysDehomIsSame}
	Let $F_1, F_2 \in \kk[x_0, x_1, \ldots, x_n]$ be homogeneous polynomials of degre $d_1$ and $d_2$ respectively, with $d_1 \ge d_2$.
	If $F_{1_{|_{x_0 = 1}}} = F_{2_{|_{x_0 = 1}}}$, then $F_1 = x_0^{d_1 - d_2} F_2.$
\end{lemma}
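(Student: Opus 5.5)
The statement to prove is Lemma \ref{lemHomogPolysDehomIsSame}. The plan is to compare $F_1$ and $F_2$ through the homogenization of their common dehomogenization. Set $g := F_{1_{|_{x_0 = 1}}} = F_{2_{|_{x_0 = 1}}} \in \kk[x_1, \ldots, x_n]$ and let $e := \rom{deg}(g)$. If $g = 0$, then both $F_1$ and $F_2$ vanish, since a homogeneous polynomial is determined by its dehomogenization, and the claim is trivial. So I assume $g \neq 0$.

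The key step is the standard structural fact about homogeneous polynomials. Write $F_1 = x_0^{a_1} G_1$ with $x_0 \nmid G_1$, where $a_1$ is the largest power of $x_0$ dividing $F_1$. Then $G_1$ is homogeneous and $G_{1_{|_{x_0 = 1}}} = g$.

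I then show that $G_1 = g^h$, the homogenization of $g$ in degree $e$. Write $G_1 = \sum_{k} x_0^{k} P_k(x_1, \ldots, x_n)$ with $P_k$ homogeneous of degree $\rom{deg}(G_1) - k$. Setting $x_0 = 1$ gives $g = \sum_k P_k$, and the $P_k$ lie in distinct degrees, so no cancellation occurs. Since $x_0 \nmid G_1$, we have $P_0 \ne 0$, and $P_0$ is the top-degree part of $g$. Hence $\rom{deg}(G_1) = e$, and comparing terms degree by degree gives exactly $G_1 = x_0^{e}\, g(x_1/x_0, \ldots, x_n/x_0) = g^h$.

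Consequently $F_1 = x_0^{d_1 - e} g^h$, and by the same argument $F_2 = x_0^{d_2 - e} g^h$. Both exponents are nonnegative because $d_i \ge e$. Therefore
$$F_1 = x_0^{d_1 - e} g^h = x_0^{d_1 - d_2}\, x_0^{d_2 - e} g^h = x_0^{d_1 - d_2} F_2,$$
where the middle step uses $d_1 \ge d_2$ so that the exponent $d_1 - d_2$ is nonnegative.

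The only mildly delicate point is the no-cancellation argument identifying the degree of $G_1$ with $\rom{deg}(g)$. It rests on grouping the monomials of $G_1$ by their power of $x_0$, and I expect it to be routine.
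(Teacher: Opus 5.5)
Your proof is correct and takes the same route as the paper: both write $F_i = x_0^{d_i - e}\, g^h$, where $g$ is the common dehomogenization and $e = \deg g$, and then factor $x_0^{d_1 - d_2}$ out of $F_1$ to compare it with $F_2$. The only difference is that you actually prove this identity, via the no-cancellation argument on $F_i = x_0^{a_i} G_i$ with $x_0 \nmid G_i$, and you treat $g = 0$ separately, whereas the paper simply asserts the identity.
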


\begin{proof}
	Suppose $F_{1_{|_{x_0 = 1}}} = F_{2_{|_{x_0 = 1}}}$, hence it follows that 
	$(F_{1_{|_{x_0 = 1}}})^h = (F_{2_{|_{x_0 = 1}}})^h.$
	 Denote by $d_3 := \rom{deg}((F_{1_{|_{x_0 = 1}}})^h)
	= \rom{deg}((F_{2_{|_{x_0 = 1}}})^h).$ Since $F_1$ and $F_2$ are homogeneous, and 
	$d_3 =\rom{deg}(F_{2_{|_{x_0 = 1}}})$, then $d_3$ is at most $d_2$ and 
	$d_3 \le d_2 \le d_1$.
	
	Since $((F_{2_{|_{x_0 = 1}}})^h)_{|_{x_0 = 1}} = F_{2_{|_{x_0 = 1}}}$, this implies that
	\begin{align} \label{eqF2Isx0F2dehomHomog}
		F_2 = x_0^{d_2 - d_3} (F_{2_{|_{x_0 = 1}}})^h = x_0^{d_2 - d_3} (F_{1_{|_{x_0 = 1}}})^h.
	\end{align}
	Similarly, since $d_3 \le d_1$ and $((F_{1_{|_{x_0 = 1}}})^h)_{|_{x_0 = 1}} = F_{1_{|_{x_0 = 1}}}$, this implies that
	\begin{align*}
		F_1 &= x_0^{d_1 - d_3} (F_{1_{|_{x_0 = 1}}})^h \\
				&= x_0^{d_1 - d_2} x_0^{d_2 - d_3} (F_{1_{|_{x_0 = 1}}})^h\\
				&= x_0^{d_1 - d_2} F_2,
	\end{align*}
	where the last equality follows from (\ref{eqF2Isx0F2dehomHomog}).
	
\end{proof}

\begin{corollary} \label{corx_0MultipleOfDerivHomog}
	Let $f \in \kk[x_1, \ldots, x_n] =: R$ and $\delta \in \rom{Der}_{\kk}(R)$. Denote by $d_1 := \rom{deg}(\delta^h(f^h))$ and 
	$d_2 := \rom{deg}(\delta(f)^h)$. Then $\delta^h(f^h) = x_0^{d_1 - d_2}\delta(f)^h,$
	where $\_^h$ is homogenization w.r.t. the variable $x_0$ in $\kk[x_0, ... , x_n]$ as defined above.
\end{corollary}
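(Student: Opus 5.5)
The plan is to reduce the statement to Lemma \ref{lemHomogPolysDehomIsSame}, applied with $F_1 := \delta^h(f^h)$ and $F_2 := \delta(f)^h$. This needs three things: both polynomials are homogeneous, they have the same dehomogenization, and $d_1 \ge d_2$.

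\textbf{Homogeneity.} By the convention fixed above, $\delta^h = \sum_i P_i \partial_{x_i}$ with every $P_i$ homogeneous of one common degree $e$. Each $\partial_{x_i}$ is taken in $\kk[x_0,\ldots,x_n]$ with $i \ge 1$, and no $\partial_{x_0}$ term appears. Since $f^h$ is homogeneous, $\delta^h(f^h) = \sum_i P_i \,\partial_{x_i} f^h$ is homogeneous of degree $e + \deg f - 1$, or is zero. The polynomial $\delta(f)^h$ is homogeneous by definition of homogenization.

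\textbf{Equal dehomogenizations.} Setting $x_0 = 1$ is a ring homomorphism $\kk[x_0,\ldots,x_n] \to R$. It commutes with $\partial_{x_i}$ for $i \ge 1$, because $x_0$ is a constant for these derivatives. Moreover $(f^h)_{|_{x_0=1}} = f$ and $(P_i)_{|_{x_0=1}}$ is the $i$-th coefficient of $\delta$. Hence
\[
\bigl(\delta^h(f^h)\bigr)_{|_{x_0=1}} = \sum_i (P_i)_{|_{x_0=1}} \,\partial_{x_i}\bigl((f^h)_{|_{x_0=1}}\bigr) = \delta(f),
\]
and also $(\delta(f)^h)_{|_{x_0=1}} = \delta(f)$.

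\textbf{Degree inequality.} This is the only step that is not immediate. The degree of $\delta(f)^h$ is $\deg \delta(f)$, which is the minimal degree of a homogeneous polynomial dehomogenizing to $\delta(f)$: if $G$ is homogeneous with $G_{|_{x_0=1}} = g$, then $G = x_0^k g^h$, so $\deg G \ge \deg g$. Applying this to $G = \delta^h(f^h)$ gives $d_1 \ge d_2$. Lemma \ref{lemHomogPolysDehomIsSame} then yields $\delta^h(f^h) = x_0^{d_1-d_2}\,\delta(f)^h$.

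\textbf{Degenerate case.} If $\delta(f) = 0$, the equality just shown forces $\delta^h(f^h) = 0$ as well, so both sides vanish and the identity holds trivially. The degree convention for the zero polynomial does not matter here. The main care needed throughout is checking that no $\partial_{x_0}$ term enters; otherwise the computation is routine.
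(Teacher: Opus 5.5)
Your proposal is correct and follows the paper's proof essentially step for step. You check that both $\delta^h(f^h)$ and $\delta(f)^h$ dehomogenize to $\delta(f)$, get $d_1 \ge d_2$ because $\delta(f)^h$ has minimal degree among homogeneous lifts of $\delta(f)$, and conclude by Lemma \ref{lemHomogPolysDehomIsSame}; your explicit treatment of the case $\delta(f)=0$ is a small addition that the paper omits.
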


\begin{proof}
	First note that $f \in \kk[x_1, \ldots, x_n]$ and $\delta \in \rom{Der}_{\kk}(R)$ so they do not contain the variable $x_0$ and in $\delta$, the coefficient of $\partial_{x_0}$ is zero. Therefore
	\begin{align}
		\delta(f)^h_{|_{x_0 = 1}} &= \delta(f), \label{eqDehomDelta_f}\\
		\delta^h(f^h)_{|_{x_0 = 1}} &= \delta^h_{|_{x_0 = 1}}(f^h_{|_{x_0 = 1}}) = \delta(f),
	\end{align} and hence 
	\begin{equation}\label{eqDehomDeltah_fhIsDelta_f_h}
		\delta^h(f^h)_{|_{x_0 = 1}}  = \delta(f)^h_{|_{x_0 = 1}}. 
	\end{equation}
	
	Now we claim that $\rom{deg}(\delta^h(f^h)) \ge \rom{deg}(\delta(f)^h)$: Since $\delta^h(f^h)$ is homogeneous, then by 
	(\ref{eqDehomDeltah_fhIsDelta_f_h}), 
	$$\rom{deg}((\delta(f)^h_{|_{x_0 = 1}})^h) = \rom{deg}((\delta^h(f^h)_{|_{x_0 = 1}})^h) \le \rom{deg}(\delta^h(f^h)).$$
	But by (\ref{eqDehomDelta_f}), we have
	$$\rom{deg}((\delta(f)^h_{|_{x_0 = 1}})^h) = \rom{deg}(\delta(f)^h) \le \rom{deg}(\delta^h(f^h)).$$
	
	Hence $\rom{deg}(\delta^h(f^h)) \ge \rom{deg}(\delta(f)^h)$ and $\delta^h(f^h)_{|_{x_0 = 1}}  = \delta(f)^h_{|_{x_0 = 1}}$,
	which by Lemma \ref{lemHomogPolysDehomIsSame} implies that $\delta^h(f^h) = x_0^{d_1 - d_2}\delta(f)^h.$
	 
\end{proof}

\begin{corollary} \label{corHomogenizingDerv}
	Let $f \in \kk[x_1, \ldots, x_n] =: R$ and $\delta \in \rom{Der}_{\kk}(R)$. Then $\delta  \in \rom{Der}(f)\subset \rom{Der}_{\kk}(R)$ if and only if 
	$\delta^h  \in \rom{Der}(f^h)\subset \rom{Der}_{\kk}(S),$ where $\_^h$ 
	is homogenization w.r.t. the variable $x_0$ in $S := \kk[x_0, ... , x_n]$ as defined above.
\end{corollary}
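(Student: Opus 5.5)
The plan is to reduce both directions to Corollary \ref{corx_0MultipleOfDerivHomog}, which gives $\delta^h(f^h) = x_0^{d_1-d_2}\,\delta(f)^h$ with $d_1 = \rom{deg}(\delta^h(f^h))$ and $d_2 = \rom{deg}(\delta(f)^h)$. Two standard facts about homogenization will do the rest. First, $(gh)^h = g^h h^h$ for any $g,h \in R$. Second, dehomogenizing at $x_0 = 1$ is a ring homomorphism that turns $\delta^h$ back into $\delta$ and $f^h$ back into $f$. Here $\delta$ has no $\partial_{x_0}$ component, so $\delta^h$ also has none; it only acquires powers of $x_0$ in its coefficients.

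\emph{($\Rightarrow$)} Suppose $\delta(f) = gf$ for some $g \in R$. Then $\delta(f)^h = g^h f^h$, and Corollary \ref{corx_0MultipleOfDerivHomog} gives
$$\delta^h(f^h) = x_0^{d_1-d_2}\, g^h f^h \in S f^h.$$
So $\delta^h \in \rom{Der}(f^h)$. If $\delta(f) = 0$, then $\delta^h(f^h)$ dehomogenizes to $0$. A homogeneous polynomial whose dehomogenization vanishes is itself $0$, so $\delta^h(f^h) = 0$ and the claim holds trivially.

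\emph{($\Leftarrow$)} Suppose $\delta^h(f^h) = G f^h$ for some $G \in S$. Setting $x_0 = 1$ is a ring map $S \to R$. Applying it to both sides, and using $(\delta^h(f^h))_{|_{x_0=1}} = \delta^h_{|_{x_0=1}}(f^h_{|_{x_0=1}}) = \delta(f)$ (this identity appears in the proof of Corollary \ref{corx_0MultipleOfDerivHomog}), we get
$$\delta(f) = G_{|_{x_0=1}} \, f \in Rf.$$
Hence $\delta \in \rom{Der}(f)$.

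The only delicate point is the commutation of dehomogenization with applying the derivation. This needs $\delta^h$ to have no $\partial_{x_0}$ term; otherwise $\partial_{x_0}$ acting on $f^h$ would contribute extra terms that do not vanish at $x_0=1$. The paper's convention that $\delta^h$ is $\sum_{i \ge 1} P_i^h x_0^{e_i}\partial_{x_i}$, which equalizes degrees without adding a $\partial_{x_0}$ component, guarantees this. I expect this bookkeeping to be the main (mild) obstacle; everything else is immediate from Corollary \ref{corx_0MultipleOfDerivHomog}.
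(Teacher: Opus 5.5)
Your proof is correct and follows the paper's argument essentially verbatim. For ($\Rightarrow$) you use Corollary~\ref{corx_0MultipleOfDerivHomog} together with $(qf)^h = q^h f^h$, and for ($\Leftarrow$) you dehomogenize $\delta^h(f^h) = p f^h$ at $x_0 = 1$. You also treat the case $\delta(f)=0$ separately, where $\deg(\delta(f)^h)$ in Corollary~\ref{corx_0MultipleOfDerivHomog} is undefined; the paper passes over this edge case, and your handling of it is sound.
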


\begin{proof}
	$(\Leftarrow)$ Suppose $\delta^h\in \rom{Der}(f^h)$, then $\delta^h(f^h) = p f^h$, for some homogeneous polynomial $p$, and dehomogenizing, we
	have $\delta(f) = p(1,x_1, \cdots, x_n) f$, where $p(1,x_1, \cdots, x_n) \in \kk[x_1, \ldots, x_n]$, hence $\delta  \in \rom{Der}(f)$.
	
	$(\Rightarrow)$ Conversely, if $\delta \in \rom{Der}(f)$, then $\delta(f) = qf$, for some $q \in \kk[x_1, \ldots, x_n]$.
	Denote by $d_1 := \rom{deg}(\delta^h(f^h))$ and 
	$d_2 := \rom{deg}(\delta(f)^h)$. By Corollary \ref{corx_0MultipleOfDerivHomog}, 
	$\delta^h(f^h) =  x_0^{d_1 - d_2} \delta(f)^h = x_0^{d_1 - d_2}(qf)^h = x_0^{d_1 - d_2}q^h f^h$. 
\end{proof}

A direct consequence of Corollary \ref{corHomogenizingDerv} and Saito's criterion gives an easy way to check if the cone construction of a free affine divisor given by $f \in \kk[x_1, \ldots, x_n]$ is 
free in projective space, i.e. if the divisor defined by $x_0 f^h$ is also free.

\begin{prop} \label{propConeConstructionFree}
	Let $f \in R = \kk[x_1, \ldots, x_n]$ be a reduced polynomial. If the divisor in $\mathbb{A}^n$ defined by $f$ is free and there exists a basis $\{ \delta_1, \ldots, \delta_n \}$ of $\rom{Der}(f)$  
	such that $\sum^n_{i= 1} \rom{deg} (\delta_i) = \rom{deg}(f)$, then the divisor in $\mathbb{P}^{n}$ defined by 
	$x_0 f^h \in S:= \kk[x_0, \ldots, x_n]$ is free with exponents $(\rom{deg} (\delta_1), \ldots, \rom{deg} (\delta_n))$.
\end{prop}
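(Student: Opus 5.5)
We apply Saito's criterion in $S = \kk[x_0, \ldots, x_n]$. The candidate basis of $\rom{Der}(x_0 f^h)$ is the homogenized basis $\delta_1^h, \ldots, \delta_n^h$ together with the Euler derivation $\delta_{\mathcal{E}} = \sum_{i=0}^n x_i \partial_{x_i}$. Here each $\delta_i = \sum_{j=1}^n P^i_j \partial_{x_j}$ with $\rom{deg}(\delta_i) = \max_j \rom{deg} P^i_j$. We take $\delta_i^h = \sum_{j} x_0^{\rom{deg}(\delta_i) - \rom{deg} P^i_j} (P^i_j)^h \partial_{x_j}$, which is homogeneous of degree $\rom{deg}(\delta_i)$ and has no $\partial_{x_0}$ component.

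\textbf{Membership in $\rom{Der}(x_0 f^h)$.} By Corollary \ref{corHomogenizingDerv}, $\delta_i \in \rom{Der}(f)$ gives $\delta_i^h \in \rom{Der}(f^h)$. Since $\delta_i^h$ has no $\partial_{x_0}$ term, $\delta_i^h(x_0) = 0 \in (x_0)$, so $\delta_i^h \in \rom{Der}(x_0)$. By \eqref{eqDerIntersection}, $\delta_i^h \in \rom{Der}(x_0) \cap \rom{Der}(f^h) = \rom{Der}(x_0 f^h)$. The Euler derivation lies in $\rom{Der}(x_0 f^h)$ because $x_0 f^h$ is homogeneous. We should also note that $x_0 f^h$ is reduced: $f^h$ is reduced since homogenization is multiplicative and preserves irreducibility, and $x_0 \nmid f^h$.

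\textbf{Determinant.} Form the $(n+1)\times(n+1)$ Saito matrix whose first row is $(x_0, x_1, \ldots, x_n)$ and whose remaining rows are the coefficient vectors of $\delta_i^h$, all with first entry $0$. Expanding along the first column gives
\[
\det = x_0 \cdot \det\bigl[(\delta_i^h)_j\bigr]_{1\le i,j\le n}.
\]
This is the main step. Write $N$ for the matrix of the $\delta_i$ in $R$. Since $\{\delta_i\}$ is a basis of $\rom{Der}(f)$, Saito's criterion in the affine setting gives $\det N = c f$ with $c \in \kk^*$. The $n\times n$ matrix of the homogenized derivations has entries $x_0^{\rom{deg}(\delta_i) - \rom{deg} P^i_j}(P^i_j)^h$, so its determinant $\Delta$ is homogeneous of degree $\sum_i \rom{deg}(\delta_i) = \rom{deg}(f)$. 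Setting $x_0 = 1$ gives $\Delta|_{x_0=1} = \det N = cf$. By Lemma \ref{lemHomogPolysDehomIsSame} applied with $d_1 = \rom{deg}\Delta = \rom{deg} f = \rom{deg} f^h = d_2$, we get $\Delta = c f^h$. This is exactly where the hypothesis $\sum \rom{deg}(\delta_i) = \rom{deg}(f)$ is needed; without it we would only get $\Delta = c\, x_0^k f^h$, and the total determinant would be $c\, x_0^{k+1} f^h$, which is not reduced.

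\textbf{Conclusion.} The Saito matrix has determinant $c\, x_0 f^h$ with $c \in \kk^*$, so by Saito's criterion $\rom{Der}(x_0 f^h)$ is free with basis $\delta_{\mathcal{E}}, \delta_1^h, \ldots, \delta_n^h$. By the decomposition \eqref{eqDer0}, $\rom{Der}_0(x_0 f^h)$ is then free of rank $n$. Its generators are the projections $\widetilde{\delta_i^h}$, which have the same degrees $\rom{deg}(\delta_i)$. Hence the exponents are $(\rom{deg}(\delta_1), \ldots, \rom{deg}(\delta_n))$.
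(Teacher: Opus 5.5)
Your proof is correct and follows the paper's plan: homogenize the basis via Corollary \ref{corHomogenizingDerv}, note the lifts lie in $\rom{Der}(x_0 f^h)$ because they have no $\partial_{x_0}$-term, adjoin the Euler derivation, and apply Saito's criterion in $S$. The only difference is in the last step: the paper asserts that linear independence survives homogenization and then concludes from the degree count $1+\sum_i \rom{deg}(\delta_i)=\rom{deg}(x_0 f^h)$, whereas you compute the Saito determinant explicitly as $x_0\Delta$ with $\Delta = c f^h$ via Lemma \ref{lemHomogPolysDehomIsSame}, which proves that independence outright (and your check that $x_0 f^h$ is reduced supplies a point the paper leaves implicit).
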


\begin{proof}
	The basis $\{ \delta_1, \ldots, \delta_n \}$ for the free $R$-module $\rom{Der}(f)$ can be lifted to a set of derivations 
	$\{ \delta_1^h, \ldots, \delta_n^h \}$ of the $S$-module $\rom{Der}(f^h)$ by Corollary \ref{corHomogenizingDerv}, but since these 
	homogenized derivations have no term in $\partial_{x_0}$, they are also derivations of $\rom{Der}(x_0 f^h)$. Homogenizing the derivations
	in the new indeterminate $x_0$ does not change the linear independence of the set over $S$, nor does adding the Euler derivation
	 $\delta_\mathcal{E}$. Hence to show that $\{ \delta_\mathcal{E},\delta_1^h, \ldots, \delta_n^h \}$ is a basis for $\rom{Der}(x_0 f^h)$, by
	 Saito's criterion it suffices to check the degree of the determinant of Saito's matrix, i.e. the sum of degrees of the homogeneous derivations:
	 $\rom{deg} (\delta_\mathcal{E})  +\sum^n_{i= 1} \rom{deg} (\delta_i^h) =1+\sum^n_{i= 1} \rom{deg} (\delta_i) = 1+ \rom{deg}(f) = \rom{deg}(x_0 f^h)$.
\end{proof}

\section{Applications: New free divisors in affine and projective spaces} \label{secApplis}

\subsection{New free divisors coming from complete reflection arrangements}

In this section, we will denote the coordinate ring of $\mathbb{P}^{\ell-1}_\omega$ and $\mathbb{A}^\ell$ by $R := \kk[x_1, \ldots, x_{\ell}]$,
and $S := \kk[x_0, x_1, \ldots, x_{\ell}]$ will denote that of $\mathbb{P}^\ell$. Let  $\prod_{j \ne i} n_j := \prod_{j \ne i,\\ 1 \le j \le \ell} n_j = n_1 n_2 \ldots \widehat{n_i} \ldots n_{\ell-1} n_\ell$ 
for a fixed $i \in \{1, 
\ldots, \ell \}$.

Complete reflection arrangements $Q = x_1 \ldots x_\ell \prod_{1 \le i < j \le \ell} (x_i^n - x_j^n)$ as studied in \cite[Proposition $6.77$]{orlik_terao_1992arrangements} or
 \cite[Section $5$]{JV_Eigenscheme2024} are free divisors for all $n \ge 1$.
A nonstandard $\mathbb{Z}$-grading on the ring $S$ allows one to consider the corresponding weighted homogeneous polynomial
$$F = x_1 \ldots x_\ell \prod_{1 \le i < j \le \ell} (x_i^{n_i} - x_j^{n_j})$$ 
for arbitrary choices of positive integers $n_i$. The polynomial $F$ corresponds to the defining equation of a 
hypersurface in $\mathbb{P}^{\ell-1}_\omega$ for some weight vector $\omega = (\omega_1, \ldots, \omega_\ell)$, and 
$F$ weighted homogeneous implies that $n_i\omega_i = n_j\omega_j$ for all $i,j \in \{1, \ldots, \ell\}$. 
Observe that one possible choice of weight vector is $\omega = (\omega_1, \ldots, \omega_\ell)$ with $\omega_i = \prod_{j \ne i} n_j$. 

One will show that these define free divisors in 
weighted projective space and one can give the free weighted exponents. Moreover, corresponding divisors in affine spaces and by the cone construction, in projective spaces, will also be free, 
and we state the free exponents in the later case.

\begin{lemma} \label{lem_vanDerMonde_der}
	Let $F = x_1 \cdots x_\ell \prod_{1 \le i < j \le \ell}(x_i^{n_i} - x_j^{n_j})$ for any positive integers $n_q \in \mathbb{Z}_{>0}$ and let
	$\omega = (\omega_1, \ldots, \omega_\ell)$ be a weight vector such that $F$ is weighted homogeneous. Then for 
	$0 \le m \le \ell-1$, $\sum_{i = 1}^\ell \omega_i x_i^{mn_i + 1}\partial_{x_i} \in \rom{Der}(F)$, and in particular, 
	$\sum_{i = 1}^\ell(\prod_{j \ne i}n_j) x_i^{mn_i + 1}\partial_{x_i} \in \rom{Der}(F)$.
\end{lemma}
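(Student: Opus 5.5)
Let $\theta_m := \sum_{i=1}^\ell \omega_i x_i^{mn_i+1}\partial_{x_i}$. The plan is to use the product rule $\rom{Der}(f_1f_2)=\rom{Der}(f_1)\cap\rom{Der}(f_2)$ from Remark \ref{rmkDerIntersection}. It therefore suffices to show that $\theta_m$ is logarithmic along each linear factor $x_k$ and along each binomial factor $x_i^{n_i}-x_j^{n_j}$. The factors are pairwise coprime, since $x_k$ does not divide $x_i^{n_i}-x_j^{n_j}$ and distinct binomials are non-associate. Hence it is enough to prove that $\theta_m(g)\in (g)$ for each irreducible-up-to-units factor $g$. Alternatively, one can compute $\theta_m(F)/F=\sum_g \theta_m(g)/g$ directly and check that it is a polynomial.

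For the factor $x_k$ we have $\theta_m(x_k)=\omega_k x_k^{mn_k+1}=(\omega_k x_k^{mn_k})\,x_k$, which lies in $(x_k)$ because $mn_k\ge 0$.

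For the factor $g_{ij}=x_i^{n_i}-x_j^{n_j}$ we get
$$\theta_m(g_{ij})=\omega_i n_i x_i^{mn_i+1}x_i^{n_i-1}-\omega_j n_j x_j^{mn_j+1}x_j^{n_j-1}.$$
Weighted homogeneity of $F$ forces $\omega_i n_i=\omega_j n_j=:N$, the common weighted degree of the binomials. Writing $a=x_i^{n_i}$ and $b=x_j^{n_j}$, the expression becomes $N(a^{m+1}-b^{m+1})$. This equals $N(a-b)(a^m+a^{m-1}b+\cdots+b^m)$, so it is divisible by $g_{ij}$ (for $m=0$ the cofactor is just $1$). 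Combining the factors gives
$$\theta_m(F)=\Big(\sum_k\omega_k x_k^{mn_k}+N\sum_{i<j}\sum_{s=0}^m x_i^{sn_i}x_j^{(m-s)n_j}\Big)F.$$

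For the ``in particular'' statement, the choice $\omega_i=\prod_{j\ne i}n_j$ satisfies $\omega_in_i=n_1\cdots n_\ell$ for all $i$, so it is an admissible weight vector and the general claim applies to it. There is no real obstacle here; the only point requiring care is to invoke $\omega_in_i=\omega_jn_j$, which is exactly what makes the two terms combine into $a^{m+1}-b^{m+1}$. One should also note that the same argument actually works for every $m\ge0$, the bound $m\le\ell-1$ being needed only later when these derivations are taken as a basis.
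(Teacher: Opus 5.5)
Your proof is correct and matches the paper's argument: you check $\theta_m$ on each factor $x_k$ and $x_i^{n_i}-x_j^{n_j}$, use $n_i\omega_i=n_j\omega_j$ to obtain $N\bigl((x_i^{n_i})^{m+1}-(x_j^{n_j})^{m+1}\bigr)$, factor it, and conclude via $\mathrm{Der}(f_1f_2)=\mathrm{Der}(f_1)\cap\mathrm{Der}(f_2)$. Only the always-valid inclusion $\bigcap_g \mathrm{Der}(g)\subseteq\mathrm{Der}(\prod_g g)$ is needed, so your coprimality and irreducibility remarks are superfluous (the binomials need not be irreducible anyway), while your explicit formula for $\theta_m(F)/F$ and the observation that every $m\ge 0$ works are correct.
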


\begin{proof}
	Let $\mu := \sum_{i = 1}^\ell \omega_i  x_i^{mn_i + 1}\partial_{x_i}$ for some fixed $0 \le m \le \ell-1$. Then 
	$\mu(x_i)= \omega_i  x_i^{mn_i+1}$ implies $\mu \in \rom{Der}(x_i)$. In addition, for $\mu(x_i^{n_i} - x_j^{n_j})$ for $i < j$, 
	we have $$\mu(x_i^{n_i} - x_j^{n_j}) = n_i \omega_i ((x_i^{n_i})^{m+1} - (x_j^{n_j})^{m+1} )$$ since $n_i \omega_i = n_j \omega_j$ for all $i,j \in \{1, \ldots, \ell\}$ if 
	$F$ is weighted homogeneous. 
	Now note that $x_i^{n_i} - x_j^{n_j}$ divides $ ((x_i^{n_i})^{m+1} - (x_j^{n_j})^{m+1} )$ since $(\alpha - 1) \, | \, (\alpha^{m+1} - 1) $ as $\alpha = 1 $ is a 
	$(m+1)$-root of unity, where $\alpha := x_i^{n_i}/x_j^{n_j}$; hence $\mu \in \rom{Der}(x_i^{n_i} - x_j^{n_j})$. Hence by (\ref{eqDerIntersection}),
	 $$ \mu \in \bigcap_i \rom{Der}(x_i) 
	  \cap \bigcap_{1 \le i < j \le \ell} \rom{Der}(x_i^{n_i} - x_j^{n_j})  = \rom{Der}(F).$$
	  Now it is clear that $F$ is weighted homogenous when $\omega_i = \prod_{j \ne i} n_j$ for $1 \le i \le \ell.$
\end{proof}

\begin{theorem} \label{thmGeneralizedW}
	Let $n_1, \ldots, n_\ell$ be any positive integers. The divisor $\mathcal{D}_\ell^\ell(\{n_i\})$ in weighted projective space 
	$\mathbb{P}^{\ell-1}_\omega$ defined by the weighted homogeneous polynomial
	$$F = x_1 \cdots x_\ell \prod_{1 \le i < j \le \ell}(x_i^{n_i} - x_j^{n_j})$$ is free with weighted exponents 
	$( n_i\omega_i + 1, 2n_i\omega_i + 1, \ldots, (\ell - 1)n_i\omega_i + 1)$ for any $i \in \{1, ... , \ell\}$.
\end{theorem}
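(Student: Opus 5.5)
We will prove freeness directly by Saito's criterion, using the $\ell$ derivations of Lemma \ref{lem_vanDerMonde_der}. Write $N := n_i\omega_i$, which is independent of $i$ because $F$ is weighted homogeneous. For $0 \le m \le \ell-1$, set
$$\mu_m := \sum_{i=1}^\ell \omega_i x_i^{mn_i+1}\partial_{x_i}.$$
By Lemma \ref{lem_vanDerMonde_der}, each $\mu_m$ lies in $\rom{Der}(F)$. Note that $\mu_0 = \delta_{\mathcal{E}_\omega}$ is the weighted Euler derivation. Each $\mu_m$ is weighted homogeneous: the $i$-th term has degree $\omega_i(mn_i+1) + (1-\omega_i) = mN+1$. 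So $\deg_\omega(\mu_m) = mN+1$, which gives the claimed exponents $(N+1, 2N+1, \ldots, (\ell-1)N+1)$ for $\mu_1,\ldots,\mu_{\ell-1}$ in $\rom{Der}_0$ after splitting off the Euler part as in \eqref{eqWDer0}.

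The key step is computing the determinant of the Saito matrix $M$ whose rows are the tensors of $\mu_0,\ldots,\mu_{\ell-1}$. The entry in row $m$, column $i$ is $\omega_i x_i^{mn_i+1}$. We factor $\omega_i x_i$ out of column $i$, leaving the Vandermonde matrix in the variables $y_i := x_i^{n_i}$:
$$\det M = \Big(\prod_{i}\omega_i\Big)\, x_1\cdots x_\ell \prod_{1\le i<j\le \ell}(y_j - y_i) = c\,F,$$
with $c = \pm\prod_i \omega_i \in \kk^*$, since $\rom{char}\,\kk = 0$. As a consistency check, Lemma \ref{lemwDegDetM} gives degree $\sum_m (mN+1) + \sum_i(\omega_i - 1)$. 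This equals $\binom{\ell}{2}N + \sum_i \omega_i = \deg_\omega F$, as required.

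We then apply Saito's criterion in the form recalled before Remark \ref{remExistsBasisHomog}. We have $\ell$ derivations in $\rom{Der}(F)$ whose coefficient determinant is a unit multiple of the reduced polynomial $F$. For reducedness we note that the factors $x_i$ and $x_i^{n_i}-x_j^{n_j}$ are squarefree and pairwise coprime: $x_i^{n_i}-x_j^{n_j}$ is squarefree because its partial derivatives have no common factor with it in characteristic $0$. Hence $\rom{Der}(F)$ is free with basis $\mu_0,\ldots,\mu_{\ell-1}$.

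Since $\mu_0 = \delta_{\mathcal{E}_\omega}$, decomposition \eqref{eqWDer0} identifies $\rom{Der}_0(F)$ as free. Its basis is given by the projections $\widetilde{\mu}_m = \mu_m - \frac{\mu_m(F)}{dF}\delta_{\mathcal{E}_\omega}$, which have the same degrees $mN+1$. This gives the weighted exponents in Definition \ref{defwFree}.

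The main obstacle is the careful justification of Saito's criterion in the weighted setting, namely that a determinant equal to a unit times $F$ forces the derivations to generate $\rom{Der}(F)$. We use the criterion as stated in the paper for polynomial rings, which is grading-independent. The only other point is keeping track of how the Euler derivation is counted in the exponents.
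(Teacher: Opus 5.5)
Your proposal is correct and is essentially the paper's proof. You take the $\ell$ derivations $\sum_i \omega_i x_i^{mn_i+1}\partial_{x_i}$ from Lemma~\ref{lem_vanDerMonde_der}, factor $\omega_i x_i$ out of each column of the Saito matrix to get a Vandermonde determinant in the $x_i^{n_i}$, conclude that $\det M$ is a unit times $F$, and apply Saito's criterion. Your extra checks (reducedness of $F$, the degree count via Lemma~\ref{lemwDegDetM}, and the passage from $\rom{Der}(F)$ to $\rom{Der}_0(F)$ via \eqref{eqWDer0}) are also correct; they make explicit what the paper leaves implicit.
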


\begin{proof}
	By Lemma \ref{lem_vanDerMonde_der}, for $0 \le m \le \ell-1$, the derivation $\delta_m = \sum_{i = 1}^\ell \omega_i x_i^{mn_i + 1}\partial_{x_i}$ is weighted homogeneous of 
	degree $m n_i \omega_i + 1$ for any $i \in \{1, ... , \ell\}$ and is in $\rom{Der}(F)$.
	Building Saito's matrix $M$ out of the above $\ell$ derivations and computing its determinant, one gets $\rom{det}\, M=  u x_1 \cdots x_\ell \, \rom{det}(V_{\ell-1})$ where 
	$u = \prod_{i = 1}^\ell \omega_i \ge 1$ is a unit and $V_{\ell-1}[x_\ell^{n_\ell}, x_{\ell-1}^{n_{\ell-1}}, \ldots, x_1^{n_1} ] = V_{\ell-1}$ is a
	 Vandermonde matrix in the $x_i^{n_i}$'s. Hence $\rom{det}\, M = u x_1 \cdots x_\ell \, \rom{det}(V_{\ell-1}) = u F$, which implies by 
	 Saito's criterion that $\rom{Der}(F)$ is free with basis $\{\delta_m \ | \ 0 \le m \le \ell-1 \}$.
\end{proof}

Complementing the study of divisors related but not necessarily coming from complete reflection arrangements, consider now 
$$F_k := p \cdot \prod_{1 \le i < j \le \ell}(x_i^{n_i} - x_j^{n_j}),\ \ \text{where} \ p = 
\begin{cases}
	1, & \text{if}\  k = 0\\
	x_1 \cdots x_k, & \text{if}\ 1 \le k \le \ell
\end{cases}$$
 for all $0 \le k \le \ell$. We will show an analogue of \cite[Proposition $6.85$]{orlik_terao_1992arrangements} proving that divisors defined by $F_k$ are also free. Inspired by the notations 
 $\mathcal{A}^k_\ell(n)$ of \cite[Chapter $6.4$]{orlik_terao_1992arrangements},  we similarly denote our divisors by $\mathcal{D}_\ell^k(\{n_i\})$ (or $\widetilde{\mathcal{D}}_\ell^k(\{n_i\})$ in 
 the standard projective space).

\begin{lemma} \label{lemDerivMuOfFk}
	Let $F_k = p \cdot \prod_{1 \le i < j \le \ell}(x_i^{n_i} - x_j^{n_j})$ for any positive integers $n_q \in \mathbb{Z}_{>0}$, where $p = 
	\begin{cases}
		1, & \text{if}\  k = 0\\
		x_1 \cdots x_k, & \text{if}\ 1 \le k \le \ell
	\end{cases}$ and let $\omega = (\omega_1, \ldots, \omega_\ell)$ be a weight vector such that $F_k$ is weighted homogeneous. Then
	$\mu = \sum_{i = 1}^\ell p \omega_i \cdot x_1^{n_1 - 1} \cdots \widehat{x_i^{n_i - 1}} \cdots x_\ell^{n_\ell - 1} \partial_{x_i} \in \rom{Der}(F_k),$
	and in particular $\mu = \sum_{i = 1}^\ell p \cdot  \prod_{j \ne i}n_j \cdot x_1^{n_1 - 1} \cdots \widehat{x_i^{n_i - 1}} \cdots x_\ell^{n_\ell - 1} \partial_{x_i} \in \rom{Der}(F_k).$
\end{lemma}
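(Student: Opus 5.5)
Following the proof of Lemma \ref{lem_vanDerMonde_der}, I will use \eqref{eqDerIntersection} and check separately that $\mu$ is tangent to each factor of $F_k$:
\[
\rom{Der}(F_k) = \bigcap_{q=1}^k \rom{Der}(x_q) \cap \bigcap_{1\le i<j\le \ell}\rom{Der}(x_i^{n_i}-x_j^{n_j}).
\]
When $k=0$ only the second family of factors appears. Write
\[
P := \prod_{q=1}^\ell x_q^{n_q-1}, \qquad \mu = p\sum_i \omega_i P_i\, \partial_{x_i}, \qquad P_i := \prod_{q\ne i} x_q^{n_q-1}.
\]

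\textbf{Step 1: the coordinate factors.} For $1\le q\le k$ we have
\[
\mu(x_q)=p\,\omega_q P_q,
\]
and $p=x_1\cdots x_k$ is divisible by $x_q$. Hence $\mu(x_q)\in (x_q)$ and $\mu\in\rom{Der}(x_q)$. This is exactly why the factor $p$ was placed in every coefficient of $\mu$.

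\textbf{Step 2: the binomial factors.} For $i<j$ compute
\[
\mu(x_i^{n_i}-x_j^{n_j}) = p\left(n_i\omega_i x_i^{n_i-1}P_i - n_j\omega_j x_j^{n_j-1}P_j\right).
\]
Now use $x_i^{n_i-1}P_i = P = x_j^{n_j-1}P_j$. Weighted homogeneity of $F_k$ forces $n_i\omega_i=n_j\omega_j$: the binomial $x_i^{n_i}-x_j^{n_j}$ must be weighted homogeneous. It follows that
\[
\mu(x_i^{n_i}-x_j^{n_j}) = p\,P\,(n_i\omega_i-n_j\omega_j)=0,
\]
so $\mu\in\rom{Der}(x_i^{n_i}-x_j^{n_j})$. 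In fact $\mu$ annihilates every binomial factor.

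\textbf{Step 3: conclusion.} Intersecting the memberships from Steps 1 and 2 gives $\mu\in\rom{Der}(F_k)$.

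For the particular choice $\omega_i=\prod_{j\ne i}n_j$, we have $n_i\omega_i=n_1\cdots n_\ell$ for every $i$. So this $\omega$ makes $F_k$ weighted homogeneous, and the second formula for $\mu$ is a special case of the first.

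\textbf{Main obstacle.} The only real point is the cancellation in Step 2. It rests on the identity $x_i^{n_i-1}P_i=x_j^{n_j-1}P_j$ together with $n_i\omega_i=n_j\omega_j$, which is why the relation $n_i\omega_i=n_j\omega_j$ must be justified. I would justify it exactly as in Lemma \ref{lem_vanDerMonde_der}: the monomials $x_i^{n_i}$ and $x_j^{n_j}$ appear in a factor of the weighted homogeneous polynomial $F_k$, and every factor of a weighted homogeneous polynomial is itself weighted homogeneous.
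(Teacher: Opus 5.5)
Your proposal is correct and follows essentially the same argument as the paper: you show that $\mu$ annihilates each binomial $x_i^{n_i}-x_j^{n_j}$ because $n_i\omega_i=n_j\omega_j$, you show that $\mu(x_q)$ is divisible by $x_q$ because $x_q \mid p$, and you conclude via \eqref{eqDerIntersection}. Your added justification of $n_i\omega_i=n_j\omega_j$, namely that every factor of a weighted homogeneous polynomial is itself weighted homogeneous, is a useful supplement, since the paper simply takes this relation for granted.
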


\begin{proof} Let $1 \le i < j \le \ell$, then
	\begin{align*}
		\mu(x_i^{n_i} - x_j^{n_j}) 
		&= \, p \omega_i \cdot x_1^{n_1 - 1} \cdots \widehat{x_i^{n_i - 1}} \cdots x_\ell^{n_\ell - 1} \cdot n_i x_i^{n_i - 1} 
		- \, p \omega_j \cdot x_1^{n_1 - 1} \cdots \widehat{x_j^{n_j - 1}} \cdots x_\ell^{n_\ell - 1} \cdot n_j x_j^{n_j - 1}\\ 
		&= p\cdot x_1^{n_1 - 1} \cdots  x_\ell^{n_\ell - 1}(n_i\omega_i - n_j\omega_j) = 0
	\end{align*}
	since $n_i\omega_i = n_j\omega_j$.
	This implies that $\mu \in \rom{Der}(x_i^{n_i} - x_j^{n_j})$. 
	
	If $k = 0$, then 
	$$\mu \in \bigcap_{1 \le i < j \le \ell} \rom{Der}(x_i^{n_i} - x_j^{n_j}) = \rom{Der}(F_k).$$
	Else, $1 \le k \le \ell$ and let $1 \le m \le k$, then $\mu(x_m) = p \omega_m \cdot x_1^{n_1 - 1} \cdots \widehat{x_m^{n_m - 1}} \cdots x_\ell^{n_\ell - 1}  \in \rom{Der}(x_m)$ 
	since by definition $x_m\, |\, p.$ Therefore we again have that 
	$$\mu \in \bigcap_{i = 1}^k \rom{Der}(x_i) \cap \bigcap_{1 \le i < j \le \ell}
	\rom{Der}(x_i^{n_i} - x_j^{n_j}) = \rom{Der}(F_k).$$
	
	As in the proof of Lemma \ref{lem_vanDerMonde_der}, it is clear that $F_k$ is weighted homogenous when $\omega_i = \prod_{j \ne i} n_j$ for $1 \le i \le \ell.$
\end{proof}

We denote the divisor $\mathcal{D}_\ell^k(\{n_i\}) := \mathcal{D}_\ell^\ell(\{n_i\})/ x_{k+1}x_{k+2} \cdots x_\ell$ for $0 \le k \le \ell$.

\begin{theorem} \label{thmW_Dkl}
	Let $n_1, \ldots, n_\ell$ be any positive integers. The divisor $\mathcal{D}_\ell^k(\{n_i\})$ in weighted projective space 
	$\mathbb{P}^{\ell-1}_\omega$ defined by the weighted homogeneous polynomial
	$$F_k = p \cdot \prod_{1 \le i < j \le \ell}(x_i^{n_i} - x_j^{n_j}),\ \ \text{where} \ p = 
	\begin{cases}
		1, & \text{if}\  k = 0\\
		x_1 \cdots x_k, & \text{if}\ 1 \le k \le \ell
	\end{cases}$$
	is free with weighted exponents 
	$$(n_i\omega_i + 1, 2n_i\omega_i + 1, \ldots,  (\ell-2)n_i\omega_i + 1, (\ell-1)n_i\omega_i + 1 - \sum_{j = k+1}^\ell \omega_j),$$
	 for any $i \in \{1, ... , \ell\}.$
\end{theorem}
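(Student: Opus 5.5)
The plan is to apply Saito's criterion directly, as in the proof of Theorem \ref{thmGeneralizedW}. The candidate basis has $\ell$ derivations:
$$\delta_m = \sum_{i=1}^\ell \omega_i x_i^{mn_i+1}\partial_{x_i}, \qquad 0 \le m \le \ell-2,$$
together with the derivation $\mu$ of Lemma \ref{lemDerivMuOfFk}. Thus $\mu$ replaces the top-degree $\delta_{\ell-1}$ used for the complete arrangement. Write $N := n_i\omega_i$, which is independent of $i$ because $F_k$ is weighted homogeneous.

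\textbf{Step 1: the candidates lie in $\rom{Der}(F_k)$.} For $\delta_m$, the proof of Lemma \ref{lem_vanDerMonde_der} already shows that $\delta_m \in \rom{Der}(x_i)$ for every $i$ and $\delta_m \in \rom{Der}(x_i^{n_i}-x_j^{n_j})$ for all $i<j$. By \eqref{eqDerIntersection}, $\delta_m$ then lies in $\rom{Der}(F_k)$ for every $0\le k\le \ell$, since $F_k$ is a product of a subset of these factors. For $\mu$, membership is exactly Lemma \ref{lemDerivMuOfFk}.

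\textbf{Step 2: degrees.} As in Theorem \ref{thmGeneralizedW}, $\rom{deg}_\omega(\delta_m) = mN+1$. For $\mu$, the coefficient of $\partial_{x_i}$ has weighted degree $\sum_{j\le k}\omega_j + \sum_{j\ne i}(n_j-1)\omega_j$. Adding $\rom{deg}_\omega(\partial_{x_i}) = 1-\omega_i$ gives
$$\rom{deg}_\omega(\mu) = (\ell-1)N + 1 - \sum_{j=k+1}^\ell \omega_j,$$
which is the same for every $i$. So $\mu$ is weighted homogeneous with the claimed last exponent.

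\textbf{Step 3: the Saito determinant.} This is the main computation. Let $M$ have rows $\delta_0,\ldots,\delta_{\ell-2},\mu$, and set $y_i := x_i^{n_i}$ and $P := p\prod_j x_j^{n_j-1}$. Then column $i$ of $M$ has entries $\omega_i x_i y_i^m$ for $0\le m\le \ell-2$ and last entry $\omega_i P/x_i^{n_i-1}$. I would proceed in four moves:
\begin{itemize}
\item[(a)] Factor $\omega_i x_i$ out of column $i$. The entries become $y_i^m$ and $P/y_i$.
\item[(b)] Multiply column $i$ by $y_i$, compensating by the factor $1/\prod_i y_i$. The rows become $y^1,\ldots,y^{\ell-1}$ and the constant row $P$.
\item[(c)] Pull $P$ out of the last row. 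The remaining matrix is a Vandermonde matrix in the $y_i$ up to row permutation and the factor $\prod_i y_i$.
\item[(d)] Collect factors, using $P\prod_i x_i = p\prod_i y_i$.
\end{itemize}
The result is
$$\det M = \pm\Big(\prod_i \omega_i\Big)\, p \prod_{i<j}(y_i-y_j) = u\,F_k, \qquad u\in\kk^*.$$
Saito's criterion then gives that $\{\delta_0,\ldots,\delta_{\ell-2},\mu\}$ is a basis of $\rom{Der}(F_k)$.

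I expect the bookkeeping of the factors $x_i$ and $y_i$ in Step 3 to be the only delicate point. As a consistency check I would verify the degree count with Lemma \ref{lemwDegDetM}: the sum of the degrees is $N\binom{\ell}{2}+\ell-\sum_{j>k}\omega_j$, and adding $\sum_j(\omega_j-1)$ gives $N\binom{\ell}{2}+\sum_{j\le k}\omega_j = \rom{deg}_\omega(F_k)$.
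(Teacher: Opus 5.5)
Your proposal is correct and is essentially the paper's proof. It uses the same basis $\delta_0,\ldots,\delta_{\ell-2},\mu$, the same membership argument via Lemmas \ref{lem_vanDerMonde_der} and \ref{lemDerivMuOfFk}, and the same column manipulations over $\rom{Frac}(\kk[x_1,\ldots,x_\ell])$, which reduce the Saito determinant to $(-1)^{\ell-1}\omega_1\cdots\omega_\ell\, p\det V_{\ell-1}$. Your explicit computation of $\rom{deg}_\omega(\mu)$ and the consistency check via Lemma \ref{lemwDegDetM} fill in what the paper leaves implicit.

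One small wording slip is in step (c). The matrix with rows $y^1,\ldots,y^{\ell-1},1$ is exactly the Vandermonde matrix after a cyclic row permutation (sign $(-1)^{\ell-1}$), with no extra factor $\prod_i y_i$. The only such factor is the compensation introduced in (b), and it cancels in (d). This is consistent with your final formula, which is correct.
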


\begin{proof}
	Let $R = \kk[x_1, \ldots, x_\ell]$. Since $F_{k} = p \cdot\prod_{1 \le i < j \le \ell}(x_i^{n_i} - x_j^{n_j})$, the $\ell-1$ $R$-linearly independent derivations in Lemma 
	\ref{lem_vanDerMonde_der} for $0 \le m \le \ell-2$ are elements of 
	\begin{align*}
		\bigcap_{i = 1}^\ell \rom{Der}(x_i) \cap \bigcap_{1 \le i < j \le \ell} \rom{Der}(x_i^{n_i} - x_j^{n_j})
		&\subset \bigcap_{i = 1}^k \rom{Der}(x_i) \cap 
		\bigcap_{1 \le i < j \le \ell} \rom{Der}(x_i^{n_i} - x_j^{n_j}) \\
		&= \rom{Der}(F_{k}),
	\end{align*}
	and hence are part of a system of generators of $\rom{Der}(F_{k})$. Form the matrix corresponding to the $\ell-1$ derivations and denote by $T'_i$ the tensors
	corresponding to the derivations in Lemma \ref{lem_vanDerMonde_der} for $1 \le m \le \ell-2$, which are the $2$nd to last row of the following matrix:
	\begin{equation} \label{matrixN1_l-2}
	N_{T'_1, \ldots, T'_{\ell-2}} = \begin{pmatrix}
		\omega_1 x_1 & \omega_2 x_2 & \ldots & \omega_\ell x_\ell\\
		\omega_1 x_1^{n_1+1} & \omega_2 x_2^{n_2+1} & \ldots & \omega_\ell x_\ell^{n_\ell+1}\\
		\vdots & \vdots & \ldots & \vdots \\
		\omega_1 x_1^{(\ell-2)n_1+1} & \omega_2 x_2^{(\ell-2)n_2+1} & \ldots & \omega_\ell x_\ell^{(\ell-2)n_\ell+1}
	\end{pmatrix}.
	\end{equation}
	
	Furthermore, consider the tensor 
	$T_{\ell-1} = (g_1^{\ell-1}, g_2^{\ell-1},\ldots ,g_\ell^{\ell-1} )$ corresponding to the derivation $\mu \in \rom{Der}(F_{k})$ in 
	Lemma \ref{lemDerivMuOfFk}, where 
	$$g_i^{\ell-1} =  p \omega_i \cdot  x_1^{n_1-1}\cdots\widehat{x_i^{n_i-1}}\cdots x_\ell^{n_\ell-1} \ \text{for}\ 1 \le i \le \ell;$$
	$T_{\ell-1}$ is added as a new row of the matrix $N_{T'_1, \ldots, T'_{\ell-2}} $.
	
	Until the end of the proof, we will consider matrices with coefficients in $\rom{Frac}(\kk[x_1, \ldots, x_\ell])$:
	\begin{align*}
		\rom{det} \, &\begin{pmatrix}
			N_{T'_1, \ldots, T'_{\ell-2}}\\ \hdashline[1.5pt/2pt]
			T_{\ell-1}
		\end{pmatrix} =\\
		 &p \omega_1 \cdots  \omega_\ell \left( x_1\cdots x_\ell \right) 
		\begin{vmatrix}
			1 &\ldots & 1 & \ldots & 1\\
			x_1^{n_1} &\ldots & x_i^{n_i} & \ldots &  x_\ell^{n_\ell}\\
			\vdots & & \vdots & & \vdots \\
			x_1^{(\ell-2)n_1} &\ldots &  x_i^{(\ell-2)n_i} & \ldots &  x_\ell^{(\ell-2)n_\ell}\\
			\frac{x_2^{n_2-1}\cdots x_\ell^{n_\ell-1}}{x_1}&\ldots & \frac{x_1^{n_1-1} \cdots \widehat{x_i^{n_i-1}} \cdots x_\ell^{n_\ell-1} }{x_i}& \ldots& 
			\frac{x_1^{n_1-1}\cdots x_{\ell-1}^{n_{\ell-1}-1}}{x_\ell}
		\end{vmatrix}\\
		=&p \omega_1 \cdots  \omega_\ell 
		\begin{vmatrix}
			1 &\ldots & 1 & \ldots & 1\\
			x_1^{n_1} &\ldots & x_i^{n_i} & \ldots &  x_\ell^{n_\ell}\\
			\vdots & & \vdots & & \vdots \\
			x_1^{(\ell-2)n_1} &\ldots &  x_i^{(\ell-2)n_i} & \ldots &  x_\ell^{(\ell-2)n_\ell}\\
			x_2^{n_2}\cdots x_\ell^{n_\ell}& \ldots & x_1^{n_1} \cdots \widehat{x_i^{n_i}} \cdots x_\ell^{n_\ell}& \ldots& 
			x_1^{n_1}\cdots x_{\ell-1}^{n_{\ell-1}}
		\end{vmatrix}\\
		=&p \omega_1 \cdots  \omega_\ell  \, (x_1^{n_1} \cdots x_\ell^{n_\ell}) \begin{vmatrix}
			1 &\ldots & 1 & \ldots & 1\\
			x_1^{n_1} &\ldots & x_i^{n_i} & \ldots &  x_\ell^{n_\ell}\\
			\vdots & & \vdots & & \vdots \\
			x_1^{(\ell-2)n_1} &\ldots &  x_i^{(\ell-2)n_i} & \ldots &  x_\ell^{(\ell-2)n_\ell}\\
			\frac{1}{x_1^{n_1}}& \ldots & \frac{1}{x_i^{n_i}}& \ldots& \frac{1}{x_\ell^{n_\ell}}
		\end{vmatrix}\\
		=&p \omega_1 \cdots  \omega_\ell \begin{vmatrix}
			x_1^{n_1} &\ldots & x_i^{n_i} & \ldots &  x_\ell^{n_\ell}\\
			x_1^{2n_1} &\ldots & x_i^{2n_i} & \ldots &  x_\ell^{2n_\ell}\\
			\vdots & & \vdots & & \vdots \\
			x_1^{(\ell-1)n_1} &\ldots &  x_i^{(\ell-1)n_i} & \ldots &  x_\ell^{(\ell-1)n_\ell}\\
			1 &\ldots & 1 & \ldots & 1\\
		\end{vmatrix}\\
		=&p \omega_1 \cdots  \omega_\ell (-1)^{\ell-1} \rom{det} \, V_{\ell-1},
	\end{align*}
	where $V_{\ell-1} := V_{\ell-1}[x_\ell^{n_\ell}, x_{\ell-1}^{n_{\ell-1}}, \ldots, x_1^{n_1} ]$  is a
	Vandermonde matrix in the $x_i^{n_i}$'s. Hence 
	\begin{align} \label{eqdetN_Fk}
		\rom{det} \, \begin{pmatrix}
			N_{T'_1, \ldots, T'_{\ell-2}}\\ \hdashline[1.5pt/2pt]
			T_{\ell-1}
		\end{pmatrix}
		&= \left( (-1)^{\ell-1} \omega_1 \cdots  \omega_\ell \right) p \cdot \prod_{1 \le i < j \le \ell}(x_j^{n_j} - x_i^{n_i})\\
		&= u F_{k} \nonumber
	\end{align}
	with $u$ a unit. By Saito's criterion, $\rom{Der}(F_{k})$ is a free $R$-module with basis, the $\ell$ derivations corresponding
	to the rows of the matrix  $\begin{pmatrix}
		N_{T'_1, \ldots, T'_{\ell-2}}\\ \hdashline[1.5pt/2pt]
		T_{\ell-1}
	\end{pmatrix}.$ Therefore by Definition \ref{defwFree}, $\rom{Der}(F_{k})$ is free with weighted exponents 
	$$(n_i\omega_i + 1, 2n_i\omega_i + 1, \ldots,  (\ell-2)n_i\omega_i + 1, (\ell-1)n_i\omega_i + 1 - \sum_{j = k+1}^\ell \omega_j)$$ for any 
	$i \in \{1, \ldots, \ell\}$.
\end{proof}

\begin{remark}
	This gives another proof of Theorem \ref{thmGeneralizedW} when $k = \ell$, $F_{\ell} = F$.
\end{remark}

The following corollaries are immediate by Lemma \ref{lemFreeWeightedEqualFreeAffine}.
 
\begin{corollary} \label{corGeneralizedAff}
	The divisor $\mathcal{D}_\ell^\ell(\{n_i\})$ in $\mathbb{A}^\ell$ defined by
	$$F = x_1 \cdots x_\ell \prod_{1 \le i < j \le \ell}(x_i^{n_i} - x_j^{n_j})$$
	is free for any positive integers $n_1, \ldots, n_\ell$. 
\end{corollary}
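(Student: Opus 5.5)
The corollary follows by combining Theorem \ref{thmGeneralizedW} with Lemma \ref{lemFreeWeightedEqualFreeAffine}.

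First, we fix the weight vector $\omega = (\omega_1, \ldots, \omega_\ell)$ with $\omega_i = \prod_{j \ne i} n_j$. As observed before Lemma \ref{lem_vanDerMonde_der}, this choice makes $n_i\omega_i = n_1 \cdots n_\ell$ independent of $i$. Hence every factor $x_i^{n_i} - x_j^{n_j}$ is weighted homogeneous, and so is $F$, which therefore defines a divisor $\mathcal{D}_\ell^\ell(\{n_i\}) \subset \mathbb{P}^{\ell-1}_\omega$. We should also check that $F$ is reduced, since the notion of freeness in Definition \ref{defwFree} is for reduced divisors. For $i<j$ the binomials $x_i^{n_i} - x_j^{n_j}$ are squarefree in characteristic $0$: their factors are $x_i^{n_i/g}-\zeta x_j^{n_j/g}$ with $g=\gcd(n_i,n_j)$ and $\zeta$ running over distinct $g$-th roots of unity (after extending scalars). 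For different pairs these factors involve different variable sets, and none of them is a coordinate $x_i$. So $F$ is squarefree.

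Second, Theorem \ref{thmGeneralizedW} shows that this divisor is free in $\mathbb{P}^{\ell-1}_\omega$, with explicit basis $\delta_m = \sum_i \omega_i x_i^{mn_i+1}\partial_{x_i}$ for $0 \le m \le \ell-1$. Concretely, $\rom{Der}(F)$ is a free graded module over $R = \kk[x_1, \ldots, x_\ell]$.

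Third, we apply Lemma \ref{lemFreeWeightedEqualFreeAffine}. The module $\rom{Der}(F) = \{\delta \in \rom{Der}_{\kk}(R) \mid \delta(F) \in RF\}$ is defined without reference to the grading. It is therefore the same $R$-module whether $F$ is regarded as a weighted homogeneous polynomial or as an affine polynomial defining a divisor in $\mathbb{A}^\ell$. Forgetting the grading preserves freeness. Alternatively, Saito's criterion can be used directly: the determinant computed in the proof of Theorem \ref{thmGeneralizedW} equals a unit times $F$, and Saito's criterion in $\kk[x_1,\ldots,x_\ell]$ does not require homogeneity.

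No step is a real obstacle. The only point needing care is the reducedness of $F$ and the independence of the module from the grading, both addressed above.
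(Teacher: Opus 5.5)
Your proposal is correct and follows the paper's route: Theorem~\ref{thmGeneralizedW} gives freeness of $\mathcal{D}_\ell^\ell(\{n_i\})$ in $\mathbb{P}^{\ell-1}_\omega$, and Lemma~\ref{lemFreeWeightedEqualFreeAffine} transfers this to $\mathbb{A}^\ell$ by forgetting the grading, since $\rom{Der}(F)$ is the same $R$-module in both settings. Your explicit check that $F$ is reduced is a welcome detail that the paper leaves implicit.
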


Recall that the divisor $\mathcal{D}_\ell^k(\{n_i\}) := \mathcal{D}_\ell^\ell(\{n_i\})/ x_{k+1}x_{k+2} \cdots x_\ell$ for $0 \le k \le \ell$.

\begin{corollary} \label{corAffremovelines}
	The divisor $\mathcal{D}_\ell^k(\{n_i\})$ in $\mathbb{A}^\ell$ defined by 
	$$F_k = p \cdot \prod_{1 \le i < j \le \ell}(x_i^{n_i} - x_j^{n_j}),\ \ \text{where} \ p = 
	\begin{cases}
		1, & \text{if}\  k = 0\\
		x_1 \cdots x_k, & \text{if}\ 1 \le k \le \ell
	\end{cases}$$
	is free for any positive integers $n_1, \ldots, n_\ell$.
\end{corollary}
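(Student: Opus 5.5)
The plan is to deduce Corollary \ref{corAffremovelines} directly from Theorem \ref{thmW_Dkl} via Lemma \ref{lemFreeWeightedEqualFreeAffine}. First I fix the positive integers $n_1, \ldots, n_\ell$ and choose the weight vector $\omega = (\omega_1, \ldots, \omega_\ell)$ with $\omega_i = \prod_{j \ne i} n_j$. Then $n_i\omega_i = n_1 \cdots n_\ell$ for every $i$, so each factor $x_i^{n_i} - x_j^{n_j}$ is weighted homogeneous of the same degree. Since every $x_m$ is weighted homogeneous as well, $F_k$ is weighted homogeneous for each $0 \le k \le \ell$, and so it defines a divisor $\mathcal{D}_\ell^k(\{n_i\}) \subset \mathbb{P}^{\ell-1}_\omega$.

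Next I check that $F_k$ is reduced, so that the freeness statements apply to a reduced divisor. The factors $x_m$ for $m \le k$ are pairwise non-associate irreducibles. Each $x_i^{n_i} - x_j^{n_j}$ is squarefree in characteristic $0$: its partial derivatives $n_i x_i^{n_i-1}$ and $-n_j x_j^{n_j-1}$ have no common factor with it. Distinct pairs $\{i,j\}$ share no irreducible factor, because setting a common factor to zero would force a relation among three distinct variables that cannot hold identically. No $x_m$ divides any binomial factor. Hence $F_k$ is reduced.

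With this in place, Theorem \ref{thmW_Dkl} says that $\mathrm{Der}(F_k)$ is a free $R$-module, $R = \kk[x_1, \ldots, x_\ell]$, with an explicit basis consisting of the $\ell - 1$ derivations of Lemma \ref{lem_vanDerMonde_der} for $0 \le m \le \ell-2$ together with $\mu$ from Lemma \ref{lemDerivMuOfFk}. Lemma \ref{lemFreeWeightedEqualFreeAffine} then gives the conclusion: freeness of a graded module does not depend on the grading, so the same basis shows that $\mathrm{Der}(F_k)$ is free when $F_k$ is regarded as an element of the affine coordinate ring of $\mathbb{A}^\ell$. Thus $\mathcal{D}_\ell^k(\{n_i\}) \subset \mathbb{A}^\ell$ is free.

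I expect no serious obstacle. The only point needing care is that the module $\mathrm{Der}(F_k)$ in the weighted setting coincides, as a set of derivations of $R$, with the affine module $\{\delta \in \mathrm{Der}_{\kk}(R) \mid \delta(F_k) \in R F_k\}$. This holds because both are defined by the same condition, and the weighted grading only adds structure on top of it. As a sanity check, the Saito determinant computed in \eqref{eqdetN_Fk} equals $F_k$ up to a nonzero scalar, so Saito's criterion in $\mathbb{A}^\ell$ also applies directly.
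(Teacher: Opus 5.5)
Your proposal is correct and follows essentially the same route as the paper, which states that this corollary is immediate from Theorem~\ref{thmW_Dkl} via Lemma~\ref{lemFreeWeightedEqualFreeAffine}, exactly as you argue. Your explicit choice of weights, your reducedness check, and your remark that the Saito determinant \eqref{eqdetN_Fk} already works over $\mathbb{A}^\ell$ are harmless extra detail on top of that argument.
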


Given the polynomials in Corollary \ref{corGeneralizedAff} and Corollary \ref{corAffremovelines}, we will now use the cone construction of 
Section \ref{secConeConstruct} to generate a family of divisors in
 standard projective $n$-space, which turns out to be free divisors and we give their exponents. But first, we need the following lemma,
 which will provide a new basis for $\rom{Der}(F)$ in Corollary \ref{corGeneralizedAff} and form part of a basis of $\rom{Der}(F_{k})$ in 
 Corollary \ref{corAffremovelines}.
 
 \begin{lemma} \label{lemAffDerv}
 	Let $F = x_1 \cdots x_\ell \prod_{1 \le i < j \le \ell}(x_i^{n_i} - x_j^{n_j})$ for any positive integers $n_q \in \mathbb{Z}_{>0}$ and let 
 	$\omega = (\omega_1, \ldots, \omega_\ell)$ be a weight vector such that $F$ is weighted homogeneous. Then for 
 	$1 \le m \le \ell$,
 	$\sum_{i = 1}^{m} \omega_i x_i \prod_{m < j \le \ell}(x_i^{n_i} - x_j^{n_j}) \partial_{x_i} \in \rom{Der}(F),$
 	and in particular, $\sum_{i = 1}^{m}(\prod_{j \ne i}n_j) x_i \prod_{m < j \le \ell}(x_i^{n_i} - x_j^{n_j}) \partial_{x_i} \in \rom{Der}(F).$ 
 	Moreover, they form a basis for $\rom{Der}(F)$.
 \end{lemma}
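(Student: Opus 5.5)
Write $\eta_m := \sum_{i=1}^{m} \omega_i x_i \prod_{m<j\le \ell}(x_i^{n_i}-x_j^{n_j})\,\partial_{x_i}$ for $1 \le m \le \ell$; for $m=\ell$ the product is empty and $\eta_\ell = \delta_{\mathcal{E}_\omega}$. The plan has two parts: first show $\eta_m \in \rom{Der}(F)$ factor by factor using \eqref{eqDerIntersection}, then show the $\eta_m$ form a basis with Saito's criterion, exploiting the triangular shape of the coefficient matrix.

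\textbf{Membership.} Each $\eta_m$ kills $\partial_{x_i}$ for $i>m$. For $i\le m$ we have $\eta_m(x_i) = \omega_i x_i\cdot(\cdots)$, and for $i>m$ we have $\eta_m(x_i)=0$, so $\eta_m \in \rom{Der}(x_i)$ for every $i$. Set $P_i := \prod_{m<j\le\ell}(x_i^{n_i}-x_j^{n_j})$ and $c:=n_i\omega_i$, which is the same for all $i$. For a factor $x_a^{n_a}-x_b^{n_b}$ with $a<b$ there are three cases.
\begin{enumerate}
\item If $a<b\le m$, then $\eta_m(x_a^{n_a}-x_b^{n_b}) = c\,(x_a^{n_a}P_a - x_b^{n_b}P_b)$. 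Viewed as a polynomial in the variable $u=x_a^{n_a}$ versus $x_b^{n_b}$, the expression $uP(u)-vP(v)$ with $P(t)=\prod_{j>m}(t-x_j^{n_j})$ is divisible by $u-v$, since it is $g(u)-g(v)$ with $g(t)=tP(t)$.
\item If $a\le m<b$, then $\eta_m(x_a^{n_a}-x_b^{n_b}) = c\,x_a^{n_a}P_a$, and $P_a$ contains the factor $x_a^{n_a}-x_b^{n_b}$.
\item If $m<a<b$, then $\eta_m(x_a^{n_a}-x_b^{n_b})=0$.
\end{enumerate}
In all three cases $\eta_m$ is tangent to the factor, so by \eqref{eqDerIntersection} we get $\eta_m\in\rom{Der}(F)$. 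The \emph{in particular} clause follows by taking $\omega_i=\prod_{j\ne i}n_j$, exactly as in Lemma \ref{lem_vanDerMonde_der}.

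\textbf{Basis.} Order the derivations $\eta_1,\ldots,\eta_\ell$ as the rows of a matrix $M$ indexed by $(m,i)$. Since $\eta_m$ has no $\partial_{x_i}$ component for $i>m$, the matrix $M$ is lower triangular. Its diagonal entries are $M_{m,m} = \omega_m x_m \prod_{j>m}(x_m^{n_m}-x_j^{n_j})$. Hence
$$\rom{det}\,M=\prod_{m=1}^\ell \omega_m x_m\prod_{j>m}(x_m^{n_m}-x_j^{n_j}) = \Big(\prod_m\omega_m\Big) F,$$
which is a unit times $F$. Saito's criterion then gives that $\{\eta_1,\ldots,\eta_\ell\}$ is a basis of $\rom{Der}(F)$, and this holds both in $\mathbb{P}^{\ell-1}_\omega$ and, by Lemma \ref{lemFreeWeightedEqualFreeAffine}, in $\mathbb{A}^\ell$.

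\textbf{Main obstacle.} The only non-routine step is case (i) of the membership argument. I will handle it by the polynomial identity $g(u)-g(v)=(u-v)h(u,v)$, applied after substituting $u=x_a^{n_a}$ and $v=x_b^{n_b}$. The other steps are immediate. One point to check along the way is that the $\eta_m$ are weighted homogeneous; this holds because each $x_i^{n_i}$ has the common weight $c$, which gives $\eta_m$ degree $(\ell-m)c+1$.
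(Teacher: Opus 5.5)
Your proof is correct. The basis half matches the paper's: a triangular Saito matrix whose diagonal product is $\bigl(\prod_i\omega_i\bigr)F$, followed by Saito's criterion. You prove membership by a different route, though.

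\textbf{The paper's route.} The paper never checks tangency factor by factor. It stacks the derivations of Lemma \ref{lem_vanDerMonde_der} into a matrix $N$ and divides the $i$-th column by $\omega_i x_i$, which gives a column-permuted Vandermonde matrix in the $x_i^{n_i}$. It then uses the LU factorization of \cite{luVandermondeYang} to write $N = L\,M$, with $L$ unipotent lower triangular over $\kk[x_1,\ldots,x_\ell]$. Since $L^{-1}$ is again polynomial, the rows of $M$ (your $\eta_m$) are polynomial combinations of derivations already known to lie in $\rom{Der}(F)$.

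\textbf{What your route buys.} Your argument is shorter and self-contained: the divided-difference identity $g(u)-g(v)=(u-v)h(u,v)$ for $g(t)=t\prod_{j>m}(t-x_j^{n_j})$, plus the two easy cases. It uses only that $n_i\omega_i$ is constant. It also avoids the external LU result and the footnote needed to extend that result beyond $\kk=\mathbb{R}$.

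\textbf{What the paper's route buys.} It gives an explicit unipotent row equivalence between the Vandermonde-type basis and the triangular one, and this equivalence restricts to the first $\ell-1$ rows. That restricted equivalence is exactly what is reused in the proof of Theorem \ref{corProjremovelines} to obtain \eqref{detME-schemeVanderMonde} after appending the row of $\mu$. From your version, that fact would need a separate argument. One option is to compare two homogeneous bases of $\rom{Der}(F)$ with the same set of distinct degrees, which forces a triangular change of basis with nonzero constant diagonal.
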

 
 \begin{proof}
 	Form the matrix corresponding to $\ell$
 	derivations in the statement of the lemma: when $m = \ell$, note that the corresponding derivation 
 	$\delta_{\mathcal{E}_\omega} = \sum_{i = 1}^\ell \omega_i x_i\partial_{x_i}$ is the weighted Euler derivation for the weight vector $\omega$. 
 	Starting with 
 	$m = \ell-1, \ell-2, \ldots, 1$, denote by $T_1, T_2, \ldots, T_{\ell-1}$, the tensors corresponding to the remaining $\ell-1$ derivations. 
 	By construction,
 	$T_i = ( g_1^i, g_2^i,\ldots, g^i_{\ell-i}, 0, \ldots, 0)$ for $1 \le i \le \ell-1$. Together, we form the matrix
 	$$M_{T_1, \ldots, T_{\ell-1}} = \begin{pmatrix}
 		\omega_1 x_1 & \omega_2 x_2 & \ldots &\ldots & \ldots& \omega_\ell x_\ell\\
 		g_1^1 & g_2^1 & \ldots & \ldots & g_{\ell-1}^1&0\\
 		g_1^2 & g_2^2 & \ldots & g_{\ell-2}^2&0&\vdots\\
 		\vdots & \vdots & \reflectbox{$\ddots$} & \reflectbox{$\ddots$}& \ldots & \vdots \\
 		g_1^{\ell-2} & g_2^{\ell-2} &0 & \ldots & \ldots & \vdots\\
 		g_1^{\ell-1} &0 &\ldots & \ldots & \ldots& 0
 	\end{pmatrix},$$
 	and similarly, form the matrix $N$ corresponding to the $\ell$ derivations in Lemma \ref{lem_vanDerMonde_der}:
 	$$N = \begin{pmatrix}
 		\omega_1 x_1 & \omega_2 x_2 & \ldots & \omega_\ell x_\ell\\
 		\omega_1 x_1^{n_1+1} & \omega_2 x_2^{n_2+1} & \ldots & \omega_\ell x_\ell^{n_\ell+1}\\
 		\vdots & \vdots & \ldots & \vdots \\
 		\omega_1 x_1^{(\ell-1)n_1+1} & \omega_2 x_2^{(\ell-1)n_2+1} & \ldots & \omega_\ell x_\ell^{(\ell-1)n_\ell+1}
 	\end{pmatrix}.$$
 	We claim that $N$ can be row reduced to $M_{T_1, \ldots, T_{\ell-1}}$ and vise versa, by elementary row operations over $\kk[x_1, \ldots, x_\ell]$.
 	
 	One considers matrix $N'$ obtained by factoring out $\omega_i x_i$ from each $i$-th column of $N$. Then
 	$\rom{det}(N) = (\prod_i \omega_i x_i) \rom{det}(N')$, and by permuting the columns of $N'$, swapping the first with the last column, second with the 
 	one before last, so on and so forth, one can write it as a Vandermonde matrix  
 	$V_{\ell-1}[x_\ell^{n_\ell}, x_{\ell-1}^{n_{\ell-1}}, \ldots, x_1^{n_1} ] = V_{\ell-1}$ in the $x_i^{n_i}$'s. 
 	By \cite[Theorem $2$]{luVandermondeYang}\footnote{\label{fnLUfield} The proof of \cite[Theorem $2$]{luVandermondeYang} is over the polynomial ring 
 		with field $\kk = \mathbb{R}$ but the proof actually works for any field $\kk$ or see \cite{Vandermonde_Rushanan} for instance.}, 
 		replacing $x_\ell^{n_\ell} \leftrightarrow x_0, \ldots, x_1^{n_1} \leftrightarrow x_{\ell-1}$ 
 	one can find a unique
 	LU factorization of $V_{\ell-1} = L_{\ell-1} U_{\ell-1}$ with coefficients in $\kk[x_1, \ldots, x_\ell]$,
 	where $L_{\ell-1}$ is a lower triangular matrix with $1$'s on its main diagonal, and $U_{\ell-1}$ is an upper-right triangular matrix. 
 	Permute the columns of $U_{\ell-1}$ so that it is an upper-left triangular matrix, denoted $U'$, by multiplying by $P$, the same permutation matrix used 
 	for $N'$ above to obtain a Vandermonde matrix: 
 	\begin{equation} \label{eqLUMtoN}
 		N' = V_{\ell-1}P = L_{\ell-1} U_{\ell-1}P = L_{\ell-1} U'.
 	\end{equation}
 	Note that by multiplying each $i$-th column of $U'$ by $\omega_i x_i$, the resulting matrix is $M_{T_1, \ldots, T_{\ell-1}},$ so we have 
 	$\rom{det}(M_{T_1, \ldots, T_{\ell-1}}) = (\prod_i \omega_i x_i) \rom{det}(U')$.
 	
 	The LU factorization shows that $U_{\ell-1} \sim V_{\ell-1}$ can be row reduced using only elementary row additions over $\kk[x_1, \ldots, x_\ell]$ 
 	because of the $1$'s on the main diagonal 
 	of $L_{\ell-1}$, hence $U' \sim N'$ also by elementary row additions by (\ref{eqLUMtoN}). Finally, multiplying the $i$-th columns of both $N'$ 
 	and $U'$ by $\omega_i x_i$ implies that 
 	$$M_{T_1, \ldots, T_{\ell-1}} \sim N$$ can be row reduced by elementary row operations. 
 	
 	Conversely, since in the LU factorization, $L_{\ell-1}$ is invertible over the polynomial ring 
 	$\kk[x_1, \ldots, x_\ell]$, it follows that one can reverse every operation, i.e. 
 	one writes  $L_{\ell-1}^{-1}V_{\ell-1} =  U_{\ell-1}$ 
 	and then applying the permutation matrix $P$: 
 	$$L_{\ell-1}^{-1} N' =  L_{\ell-1}^{-1}V_{\ell-1}P =  U_{\ell-1} P = U'.$$
 	Hence
 	$$N \sim M_{T_1, \ldots, T_{\ell-1}},$$ i.e. by elementary row operations over $\kk[x_1, \ldots, x_\ell]$, $N$ can be row reduced to $M_{T_1, \ldots, T_{\ell-1}}$.
 	
 	Since $N \sim M_{T_1, \ldots, T_{\ell-1}}$ using only elementary row operations, and since Lemma \ref{lem_vanDerMonde_der} says that the $\ell$
 	derivations corresponding to the rows of $N$ are in $\rom{Der}(F)$, this implies that \\
 	$\sum_{i = 1}^{m} \omega_i x_i\prod_{m < j \le \ell}(x_i^{n_i} - x_j^{n_j}) \partial_{x_i} \in \rom{Der}(F)$ for $1 \le m \le \ell$.
 	
 	It is clear also that $F$ is weighted homogenous when $\omega_i = \prod_{j \ne i} n_j$ for $1 \le i \le \ell.$
 	
 	Finally, the $\ell$ derivations in the lemma form a basis for $\rom{Der}(F)$ by Saito's criterion since by construction, the determinant of 
 	$M_{T_1, \ldots, T_{\ell-1}}$, its associated Saito matrix, is just the product along
 	the diagonal of its transpose, which is equal to $uF$ for $u = \prod_{i = 1}^\ell \omega_i \ge 1$ a unit.
 \end{proof}

\begin{theorem} \label{thmGeneralizedP}
	Let $n_1, \ldots, n_\ell$ be positive intergers such that $n_1 \le n_2 \le \ldots \le n_\ell$. The divisor $\widetilde{\mathcal{D}}_\ell^\ell(\{n_i\})$ in 
	projective space $\mathbb{P}^\ell$ defined by the homogeneous polynomial
	$$x_0 \cdot F^h = x_0 x_1 \cdots x_\ell \prod_{1 \le i < j \le \ell}(x_i^{n_i}x_0^{n_j - n_i} - x_j^{n_j})$$ 
	is free with exponents $(1, n_\ell + 1, n_\ell + n_{\ell-1} + 1, \ldots, n_\ell + n_{\ell-1} + \cdots +n_2 + 1)$, where $\_^h$  is homogenization in the variable $x_0$ in 
	$\kk[x_0, \ldots, x_\ell]$.
\end{theorem}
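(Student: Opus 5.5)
The proof will be a direct application of Proposition \ref{propConeConstructionFree} to the affine free divisor $V(F)\subset\mathbb{A}^\ell$ of Corollary \ref{corGeneralizedAff}. For the basis I will use the one given by Lemma \ref{lemAffDerv}, not the Vandermonde-type basis of Theorem \ref{thmGeneralizedW}. For $1\le m\le \ell$ set
$$\delta_m=\sum_{i=1}^{m}\omega_i x_i\prod_{m<j\le \ell}(x_i^{n_i}-x_j^{n_j})\,\partial_{x_i}.$$
By Lemma \ref{lemAffDerv}, $\{\delta_1,\ldots,\delta_\ell\}$ is an $R$-basis of $\rom{Der}(F)$ with $R=\kk[x_1,\ldots,x_\ell]$. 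Here the grading is forgotten, as in Lemma \ref{lemFreeWeightedEqualFreeAffine}, and $\rom{deg}$ denotes the ordinary total degree.

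\textbf{Step 1 (homogenization of $F$).} Homogenization in $x_0$ is multiplicative. Since $n_i\le n_j$ for $i<j$, we have $(x_i^{n_i}-x_j^{n_j})^h=x_i^{n_i}x_0^{n_j-n_i}-x_j^{n_j}$. Hence $x_0F^h$ is exactly the polynomial in the statement. Reducedness of $F$ is clear in characteristic $0$: the factors $x_i$ and $x_i^{n_i}-x_j^{n_j}$ are squarefree and pairwise coprime.

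\textbf{Step 2 (degrees of the basis).} For $i\le m<j$ we have $n_i\le n_j$, so $\rom{deg}(x_i^{n_i}-x_j^{n_j})=n_j$. Thus every nonzero coefficient of $\delta_m$ has degree at most $1+\sum_{j>m}n_j$, and the maximum is attained. This gives
$$\rom{deg}(\delta_\ell)=1,\quad \rom{deg}(\delta_{\ell-1})=n_\ell+1,\quad\ldots,\quad \rom{deg}(\delta_1)=n_\ell+\cdots+n_2+1.$$
These are precisely the claimed exponents. The homogenized derivation $\delta_m^h$ is understood, as in the paper's convention, by homogenizing every coefficient to this common degree.

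\textbf{Step 3 (degree check).} Summing the degrees gives
$$\sum_{m=1}^{\ell}\Big(1+\sum_{j>m}n_j\Big)=\ell+\sum_{j=1}^{\ell}(j-1)n_j.$$
On the other hand, again using $n_i\le n_j$,
$$\rom{deg}(F)=\ell+\sum_{1\le i<j\le\ell}n_j=\ell+\sum_{j=1}^\ell (j-1)n_j.$$
The two quantities coincide, so the hypothesis of Proposition \ref{propConeConstructionFree} holds. That proposition then yields freeness of $V(x_0F^h)\subset\mathbb{P}^\ell$ with exponents $(\rom{deg}\delta_1,\ldots,\rom{deg}\delta_\ell)$, listed in increasing order. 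In that proposition the Euler derivation of $\mathbb{P}^\ell$ is the extra basis element and is not among the listed exponents; the entry $1$ in the list comes from $\delta_\ell$, the weighted Euler derivation of $F$.

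\textbf{Main obstacle.} The only real subtlety is the degree bookkeeping in Steps 2 and 3. Equality $\sum\rom{deg}\delta_m=\rom{deg}F$ fails for the Vandermonde basis of Theorem \ref{thmGeneralizedW}, whose affine degrees are too large. It holds for the triangular basis of Lemma \ref{lemAffDerv} precisely because of the ordering $n_1\le\cdots\le n_\ell$: this ordering makes the top-degree term of each factor $x_i^{n_i}-x_j^{n_j}$ be $x_j^{n_j}$, so each factor contributes exactly $n_j$ both to $\rom{deg}F$ and to the degree of the coefficients of $\delta_m$. I would verify this carefully, including that the maximal degree is actually attained in each $\delta_m$, for instance by its $\partial_{x_1}$-coefficient.
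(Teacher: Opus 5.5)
Your proposal is correct and follows the paper's proof: take the triangular basis of Lemma \ref{lemAffDerv}, use $n_1\le\cdots\le n_\ell$ to check that $\sum_m \rom{deg}\,\delta_m=\rom{deg}\,F$, and then apply Proposition \ref{propConeConstructionFree}. Your explicit computation, giving $\ell+\sum_{j}(j-1)n_j$ on both sides, simply spells out the paper's one-line identity $\sum_i \rom{deg}\,\zeta_i=\ell+\sum_{j=0}^{\ell-2}\sum_{i=0}^{j}n_{\ell-i}=\rom{deg}\,F$.
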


\begin{proof}
	Let $R = \kk[x_1, \ldots, x_\ell]$. The $\ell$ derivations in Lemma \ref{lemAffDerv} is a basis for $\rom{Der}(F) \subset \rom{Der}_{\kk}(R)$. Denoting the 
	basis by $\{\zeta_1, \ldots, \zeta_\ell\}$, ordered by increasing degree, 
	$$\sum_{i=1}^{\ell} \rom{deg} \, \zeta_i  = \ell + \sum^{\ell - 2}_{j=0} \sum^j_{i=0} n_{\ell-i} = 
	\rom{deg} \, F.$$ Hence by Proposition \ref{propConeConstructionFree} with $F$ reduced in $R$, the divisor in $\mathbb{P}^\ell$ 
	 from the cone construction defined by $x_0 \cdot F^h$ is free with exponents 
	 $$(\rom{deg} \, \zeta_1, \ldots, \rom{deg} \, \zeta_{\ell}) = (1, n_\ell + 1, n_\ell + n_{\ell-1} + 1, \ldots, n_\ell + n_{\ell-1} + \cdots +n_2 + 1).$$
\end{proof}

\begin{remark} \label{rmkBasesAffLiftRefArr}
	Lemma \ref{lem_vanDerMonde_der} and Lemma \ref{lemAffDerv} provide different bases for 
	$\rom{Der}(F)\subset \rom{Der}_{\kk}(R)$ where $R = \kk[x_1, \ldots, x_\ell]$; this is shown in the proofs of Theorem \ref{thmGeneralizedW} and 
	Lemma \ref{lemAffDerv}. Naturally the bases have derivations of equivalent weighted degrees in the context of weighted projective space $\mathbb{P}^{\ell-1}_\omega$ 
	(see Theorem \ref{thmGeneralizedW}). And in affine space $\mathbb{A}^\ell$, there are two bases generating $\rom{Der}(F)$ (see Corollary \ref{corGeneralizedAff}). 
	However, given $n_1 \le n_2 \le \ldots \le n_\ell$, in general if all $n_i$'s are not equal, then the
	basis given in Lemma \ref{thmGeneralizedW} cannot be lifted to $\mathbb{P}^\ell$ to form part of a basis of $\rom{Der}(x_0 F^h) \subset \rom{Der}_{\kk}(S)$ for 
	$S = \kk[x_0, \ldots, x_\ell]$, as opposed to the basis $\{\zeta_1, \ldots, \zeta_\ell\}$ of Lemma \ref{lemAffDerv} 
	(see Theorem \ref{thmGeneralizedP}) which always satisfies $\sum_{i=1}^{\ell} \rom{deg} \, \zeta_i  = \rom{deg} \,F.$
\end{remark}

We denote the divisor $\widetilde{\mathcal{D}}_\ell^k(\{n_i\}) := \widetilde{\mathcal{D}}_\ell^\ell(\{n_i\})/ x_{k+1}x_{k+2} \cdots x_\ell$ for $0 \le k \le \ell$.

\begin{theorem} \label{corProjremovelines}
	Let $n_1, \ldots, n_\ell$ be positive intergers such that $n_1 \le n_2 \le \ldots \le n_\ell$. 
	The divisor $\widetilde{\mathcal{D}}_\ell^k(\{n_i\})$ in $\mathbb{P}^\ell$ defined by the homogeneous polynomial 
	$$x_0 \cdot F_k^h = x_0 p \cdot \prod_{1 \le i < j \le \ell}(x_i^{n_i}x_0^{n_j - n_i} - x_j^{n_j}),
	\ \ \text{where} \ p = 
	\begin{cases}
		1, & \text{if}\  k = 0\\
		x_1 \cdots x_k, & \text{if}\ 1 \le k \le \ell
	\end{cases}$$ 
	is free with exponents 
	$$(1, n_\ell + 1, n_\ell + n_{\ell-1} + 1, ... ,n_\ell + n_{\ell-1} + \cdots +n_3 + 1, n_\ell + n_{\ell-1} + \cdots +n_2 + 1- \ell + k)$$
	 for all $0 \le k \le \ell$.
\end{theorem}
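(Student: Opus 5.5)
The plan is to apply Proposition \ref{propConeConstructionFree} to $F_k$ in $R=\kk[x_1,\ldots,x_\ell]$. To do this we need an affine basis of $\rom{Der}(F_k)$ whose (standard) degrees sum to $\rom{deg}\,F_k$. Note first that $F_k$ is reduced and $\rom{deg}\,F_k = k + \sum_{1\le i<j\le \ell} n_j$, since $n_i\le n_j$ for $i<j$. Hence $\rom{deg}\,F_k = \rom{deg}\,F - (\ell-k)$, where $F=F_\ell$.

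For the first $\ell-1$ basis elements we take the derivations $\zeta_1,\ldots,\zeta_{\ell-1}$ of Lemma \ref{lemAffDerv}, namely those indexed by $m=\ell,\ell-1,\ldots,2$. They have degrees $1, n_\ell+1, \ldots, n_\ell+\cdots+n_3+1$ and lie in $\rom{Der}(F)\subset \rom{Der}(F_k)$. For the last element we replace $\zeta_\ell = \omega_1 x_1\prod_{1<j\le\ell}(x_1^{n_1}-x_j^{n_j})\partial_{x_1}$ (the case $m=1$) by a derivation $\eta$ built from the derivation $\mu$ of Lemma \ref{lemDerivMuOfFk}. Its degree should be $n_\ell+\cdots+n_2+1-\ell+k$, which is exactly what makes the degree sum equal $\rom{deg}\,F_k$.

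To construct $\eta$, we mimic the proof of Lemma \ref{lemAffDerv}. In that proof the Vandermonde matrix $N$ is row-reduced to the triangular matrix $M_{T_1,\ldots,T_{\ell-1}}$ by a unimodular lower-triangular operation $L_{\ell-1}^{-1}$. In Theorem \ref{thmW_Dkl}, the last row of $N$ is replaced by the tensor of $\mu$ and the determinant becomes $uF_k$. Apply the same row operations to the first $\ell-1$ rows of $\begin{pmatrix} N_{T'_1,\ldots,T'_{\ell-2}} \\ T_{\ell-1}\end{pmatrix}$; this turns them into the rows of $\zeta_1,\ldots,\zeta_{\ell-1}$, using that the $LU$ factorization of the top-left block is compatible. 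Then subtract $R$-multiples of those rows from the $\mu$-row to clear all entries except the first, which can be done because of the triangular shape. The resulting row is a derivation $\eta = h\,\partial_{x_1}$ in $\rom{Der}(F_k)$, and the determinant is unchanged, so it still equals $uF_k$. Comparing with the triangular structure, $h$ should be $u'\,(F_k/F)\cdot x_1\prod_{j>1}(x_1^{n_1}-x_j^{n_j})$ up to a unit. Concretely we expect $\eta = \omega_1 x_1^{\epsilon}\prod_{1<j\le\ell}(x_1^{n_1}-x_j^{n_j})\,\partial_{x_1}$ divided by $x_{k+1}\cdots x_\ell$. This is not polynomial, so the honest candidate must come from the reduction itself rather than from a naive guess.

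Alternatively, and perhaps more cleanly, we would directly verify membership for an explicit $\eta$ and then invoke Saito's criterion. The matrix with rows $\zeta_1,\ldots,\zeta_{\ell-1},\eta$ is (anti)triangular, so its determinant is the product of diagonal entries; we would check that this equals a unit times $F_k$. Once we have a basis with the right degrees, Proposition \ref{propConeConstructionFree} gives freeness of $x_0F_k^h$ with exponents $(1, n_\ell+1,\ldots, n_\ell+\cdots+n_3+1, n_\ell+\cdots+n_2+1-\ell+k)$.

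The main obstacle is producing the last basis element $\eta$ as an honest polynomial derivation of the correct degree. The degree is the issue: $\mu$ itself has degree $k+\sum_i (n_i-1) - (n_1-1)$, which in general is not the required one. So $\eta$ must be obtained as $\mu$ minus an $R$-combination of $\zeta_1,\ldots,\zeta_{\ell-1}$, and the degree drop from that cancellation must be controlled. I would first try to compute this combination via the unimodular $L_{\ell-1}^{-1}$ from the proof of Lemma \ref{lemAffDerv}, checking small $\ell$ (say $\ell=2,3$) by hand to identify the closed form before proving it in general.
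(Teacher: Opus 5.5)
There is a genuine gap. Your overall plan matches the paper's: apply Proposition~\ref{propConeConstructionFree} to $\zeta_1,\ldots,\zeta_{\ell-1}$ from Lemma~\ref{lemAffDerv} together with one derivation coming from $\mu$ of Lemma~\ref{lemDerivMuOfFk}. But you never establish the last basis element, and the route you commit to cannot work.

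The obstacle you identify comes from an arithmetic slip. Your own formula gives $\deg\mu = k+\sum_{i\ge 2}(n_i-1) = n_\ell+\cdots+n_2+1-\ell+k$. (The $\partial_{x_1}$-coefficient $p\,\omega_1x_2^{n_2-1}\cdots x_\ell^{n_\ell-1}$ has the largest degree because $n_1$ is minimal.) This is \emph{exactly} the required last exponent, so $\mu$ needs no modification, and the paper uses $\mu$ itself.

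Worse, the modification you propose is impossible for $k<\ell$. Among the $\zeta_i$, only the Euler row has a nonzero $\partial_{x_\ell}$-entry, namely the non-unit $\omega_\ell x_\ell$. It does not divide the $\partial_{x_\ell}$-entry $p\,\omega_\ell x_1^{n_1-1}\cdots x_{\ell-1}^{n_{\ell-1}-1}$ of $\mu$, so the $\mu$-row cannot be cleared over $R$. More decisively, for any polynomial $h$ the matrix with rows $\zeta_1,\ldots,\zeta_{\ell-1},h\,\partial_{x_1}$ has determinant
$$\pm h\,\omega_2\cdots\omega_\ell\, x_2\cdots x_\ell\prod_{2\le i<j\le\ell}(x_i^{n_i}-x_j^{n_j}).$$
This is divisible by $x_\ell$, so it is never a unit times $F_k$ when $k<\ell$. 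Your remark that the expected $\eta$ is not polynomial is precisely this obstruction, not a defect of a naive guess. Your (anti)triangular alternative fails for the same reason.

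The missing step is the Saito determinant for $\{\zeta_1,\ldots,\zeta_{\ell-1},\mu\}$, and you had essentially found it. Write $V_{\ell-1}P=L_{\ell-1}(U_{\ell-1}P)$ and scale the columns by $\omega_ix_i$. Since $L_{\ell-1}$ is lower unitriangular, its leading $(\ell-1)\times(\ell-1)$ block $L'$ satisfies $N_{T'_1,\ldots,T'_{\ell-2}}=L'\,M_{T_1,\ldots,T_{\ell-2}}$. The block $L'$ involves only those rows and has determinant $1$. Hence, leaving the $\mu$-row untouched,
$$\det\begin{pmatrix} M_{T_1,\ldots,T_{\ell-2}}\\ T_{\ell-1}\end{pmatrix}=\det\begin{pmatrix} N_{T'_1,\ldots,T'_{\ell-2}}\\ T_{\ell-1}\end{pmatrix}=uF_k$$
by the Vandermonde computation in Theorem~\ref{thmW_Dkl}. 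Saito's criterion then makes $\{\zeta_1,\ldots,\zeta_{\ell-1},\mu\}$ a basis of $\mathrm{Der}(F_k)$ whose degrees sum to $\deg F_k$. Proposition~\ref{propConeConstructionFree} then gives the stated exponents. You should stop after transporting the first $\ell-1$ rows instead of further reducing the $\mu$-row.
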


\begin{proof}
	Let $R = \kk[x_1, \ldots, x_\ell].$ Consider the matrix $M_{T_1, \ldots, T_{\ell-2}}$
	(a submatrix of $M_{T_1, \ldots, T_{\ell-1}}$ in the proof of Lemma \ref{lemAffDerv}) 
	corresponding to the $\ell-1$ $R$-linearly independent derivations in Lemma 
	\ref{lemAffDerv} for $2 \le m \le \ell$ and denote by $T_i= ( g_1^i, g_2^i,\ldots, g^i_{\ell-i}, 0, \ldots, 0)$ for $ 1 \le i \le \ell-2$, the tensors of the $2$nd to last row of 
	$M_{T_1, \ldots, T_{\ell-2}}$:
	\begin{equation}
		M_{T_1, \ldots, T_{\ell-2}} = \begin{pmatrix}
			\omega_1 x_1 & \omega_2 x_2 & \ldots &\ldots & \ldots& \omega_\ell x_\ell\\
			g_1^1 & g_2^1 & \ldots & \ldots & g_{\ell-1}^1&0\\
			g_1^2 & g_2^2 & \ldots & g_{\ell-2}^2&0&\vdots\\
			\vdots & \vdots & \reflectbox{$\ddots$} & \reflectbox{$\ddots$}& \ldots & \vdots \\
			g_1^{\ell-2} & g_2^{\ell-2} &0 & \ldots & \ldots & \vdots
		\end{pmatrix}.
	\end{equation} As in the proof of Theorem \ref{thmW_Dkl}, let $T_{\ell-1}= (g_1^{\ell-1}, g_2^{\ell-1},\ldots ,g_\ell^{\ell-1} )$ corresponding to the derivation 
	$\mu \in \rom{Der}(F_{k})$ in Lemma \ref{lemDerivMuOfFk}, where 
	$$g_i^{\ell-1} =  p \omega_i \cdot  x_1^{n_1-1}\cdots\widehat{x_i^{n_i-1}}\cdots x_\ell^{n_\ell-1} \ \text{for}\ 1 \le i \le \ell,$$
	and $T_{\ell-1}$ is added as a new row of the matrix $M_{T_1, \ldots, T_{\ell-2}}$. Once we show that the derivations corresponding to the rows of the matrix
	$M = \begin{pmatrix}
		M_{T_1, \ldots, T_{\ell-2}}\\ \hdashline[1.5pt/2pt]
		T_{\ell-1}\end{pmatrix}$ is basis for $\rom{Der}(F_k)  \subset \rom{Der}_{\kk}(R)$, then we will show that such a basis is a candidate to be lifted as in 
		Proposition \ref{propConeConstructionFree} to a basis of $\rom{Der}(x_0 \cdot F_k^h) \subset \rom{Der}_{\kk}(S)$ for $S = \kk[x_0, \ldots, x_\ell]$.
		
	First, recall the matrix $N_{T'_1, \ldots, T'_{\ell-2}}$ in \eqref{matrixN1_l-2} was built from the $\ell-1$ $R$-linearly independent derivations in Lemma 
	\ref{lem_vanDerMonde_der} for $0 \le m \le \ell-2$, and these derivations were shown to be part of a basis of $\rom{Der}(F_{k})$ in Theorem \ref{thmW_Dkl}. 
	Following the proof of Lemma \ref{lemAffDerv}, we will show that
	$$M_{T_1, \ldots, T_{\ell-2}} \sim N_{T'_1, \ldots, T'_{\ell-2}}$$ row reduced and vise versa, using only elementary row operations over $R$.
	For $1 \le i \le \ell$, factor $\omega_i x_i$ from the $i$th-columns of both $N_{T'_1, \ldots, T'_{\ell-2}}$ and $M_{T_1, \ldots, T_{\ell-2}}$, and denote the resulting 
	matrices $V$ and $U$ respectively.	
	In the LU factorization of the Vandermonde matrix $V_{\ell-1} = V_{\ell-1}[x_\ell^{n_\ell}, x_{\ell-1}^{n_{\ell-1}}, \ldots, x_1^{n_1} ]  = L_{\ell-1} U_{\ell-1}$ in 
	\cite[Theorem $2$]{luVandermondeYang}, and
	up to permutations of the columns by matrix $P$ (first column with last, second with next to last, etc.) as in the proof of Lemma \ref{lemAffDerv}, 
	\begin{equation}\label{eq_VPLUP} V_{\ell-1}P  = L_{\ell-1} (U_{\ell-1}P),\end{equation}
	and we note that $V$ is a submatrix of $V_{\ell-1}P$ where the last row is ommited, while $U$ is a submatrix of $U_{\ell-1}P$ with last row deleted.
	
	Now note that $L_{\ell-1}$ is a lower triangular matrix with $1$'s on its main diagonal.
	This implies (i) that all first $\ell-1$ rows of $V_{\ell-1}P$ are linear combinations of only the first $\ell-1$ rows of $U_{\ell-1}P$
	since the last column of $L_{\ell-1}$ has $0$'s except for the last entry, and (ii) that since there are $1$'s on the main diagonal of $L_{\ell-1}$, $V_{\ell-1}P$ can be 
	row reduced to $U_{\ell-1}P$ using only elementary row additions. This implies that 
	\begin{equation}\label{eqUsimV} U \sim V,\end{equation}
	i.e. $U$ can be row reduced to $V$ over 
	$\kk[x_1, \ldots, x_\ell]$ using only elementary row additions. Multiplying back the $i$-th columns of $U$ and $V$ by $\omega_i x_i$, this implies 
	$M_{T_1, \ldots, T_{\ell-2}} \sim N_{T'_1, \ldots, T'_{\ell-2}}$ row reduced and vise versa, using only elementary row operations since $L_{\ell-1}$ is invertible 
	over $\kk[x_1, \ldots, x_\ell]$ in equation (\ref{eq_VPLUP}), 
	as in the proof of Lemma \ref{lemAffDerv}.
	This implies that the $\ell-1$ derivations corresponding to the rows of $M_{T_1, \ldots, T_{\ell-2}}$ are also in $ \rom{Der}(F_{k})$.
	
	Next, we show that \begin{equation} \label{detME-schemeVanderMonde}
		\rom{det} \, \begin{pmatrix}
			M_{T_1, \ldots, T_{\ell-2}}\\ \hdashline[1.5pt/2pt]
			T_{\ell-1}
		\end{pmatrix} =
		\rom{det} \, \begin{pmatrix}
			N_{T'_1, \ldots, T'_{\ell-2}}\\ \hdashline[1.5pt/2pt]
			T_{\ell-1}
		\end{pmatrix},
	\end{equation}
	and until the end of the proof, we will consider matrices with coefficients in $\rom{Frac}(\kk[x_1, \ldots, x_\ell])$:
	\begin{align*}
		\rom{det} \, &\begin{pmatrix}
			N_{T'_1, \ldots, T'_{\ell-2}}\\ \hdashline[1.5pt/2pt]
			T_{\ell-1}
		\end{pmatrix} =\\
		&p \omega_1 \cdots  \omega_\ell \left( x_1\cdots x_\ell \right) 
		\begin{vmatrix}
			1 &\ldots & 1 & \ldots & 1\\
			x_1^{n_1} &\ldots & x_i^{n_i} & \ldots &  x_\ell^{n_\ell}\\
			\vdots & & \vdots & & \vdots \\
			x_1^{(\ell-2)n_1} &\ldots &  x_i^{(\ell-2)n_i} & \ldots &  x_\ell^{(\ell-2)n_\ell}\\
			\frac{x_2^{n_2-1}\cdots x_\ell^{n_\ell-1}}{x_1}&\ldots & \frac{x_1^{n_1-1} \cdots \widehat{x_i^{n_i-1}} \cdots x_\ell^{n_\ell-1} }{x_i}& \ldots& 
			\frac{x_1^{n_1-1}\cdots x_{\ell-1}^{n_{\ell-1}-1}}{x_\ell}
		\end{vmatrix}\\
		= &p \omega_1 \cdots  \omega_\ell \left( x_1\cdots x_\ell \right) \rom{det} \, \begin{pmatrix}
			V\\ \hdashline[1.5pt/2pt]
			T'_{\ell-1}
		\end{pmatrix},
	\end{align*}
	where $T'_{\ell-1} := (\frac{x_2^{n_2-1}\cdots x_\ell^{n_\ell-1}}{x_1},\ldots , \frac{x_1^{n_1-1} \cdots \widehat{x_i^{n_i-1}} \cdots x_\ell^{n_\ell-1} }{x_i}, \ldots, 
	\frac{x_1^{n_1-1}\cdots x_{\ell-1}^{n_{\ell-1}-1}}{x_\ell}).$ 
	Meanwhile,
	\begin{align*} 
		\rom{det} \, \begin{pmatrix}
			M_{T_1, \ldots, T_{\ell-2}}\\ \hdashline[1.5pt/2pt]
			T_{\ell-1}
		\end{pmatrix} = p \omega_1 \cdots  \omega_\ell \left( x_1\cdots x_\ell \right) \rom{det} \, \begin{pmatrix}
			U\\ \hdashline[1.5pt/2pt]
			T'_{\ell-1}
		\end{pmatrix}.
	\end{align*}
	To prove \eqref{detME-schemeVanderMonde}, simply note that by \eqref{eqUsimV}, $U$ can be row reduced to $V$ over $R$ using only elementary row additions.
	Hence, by \eqref{eqdetN_Fk} in the proof of Theorem \ref{thmW_Dkl}, 
	$$\rom{det} \, \begin{pmatrix}
		M_{T_1, \ldots, T_{\ell-2}}\\ \hdashline[1.5pt/2pt]
		T_{\ell-1}
	\end{pmatrix} = uF_k,$$ with $u$ a unit.
	
	Therefore $M = \begin{pmatrix}
		M_{T_1, \ldots, T_{\ell-2}}\\ \hdashline[1.5pt/2pt]
		T_{\ell-1}
	\end{pmatrix}$ is a Saito matrix for $F_k$. The first $\ell-1$ rows of $M$ recall correspond to the $\ell-1$ $R$-linearly 
	independent derivations in Lemma \ref{lemAffDerv} for $2 \le m \le \ell$, ordered by increasing degree. Denote them by $\zeta_1, \cdots, \zeta_{\ell-1}$ and denote 
	the last derivation corresponding to the tensor $T_{\ell-1}$ by $\mu_k$. Hence $\{ \zeta_1, \cdots, \zeta_{\ell-1}, \mu_k\}$ is a basis for 
	$\rom{Der}(F_k)\subset \rom{Der}_{\kk}(R)$ and 
	\begin{align*}
		\sum_{i=1}^{\ell-1} \rom{deg}\, \zeta_i + \rom{deg}\, \mu_k\ 
		&= ( \ell - 1 + \sum^{\ell - 3}_{j=0} \sum^j_{i=0} n_{\ell-i} ) + (k - \ell +1+ n_2+ \ldots + n_\ell)\\
		&= k + \sum^{\ell - 2}_{j=0} \sum^j_{i=0} n_{\ell-i} = \rom{deg}\, F_k,
	\end{align*}
	with $F_k$ reduced in $R$. By Proposition \ref{propConeConstructionFree}, 
	the divisor in $\mathbb{P}^\ell$ built from the cone construction and defined by $x_0 \cdot F_k^h$ is free with exponents 
	$$(\rom{deg} \, \zeta_1, \ldots, \rom{deg} \, \zeta_{\ell-1}, \rom{deg} \, \mu_k) = (1, n_\ell + 1, n_\ell + n_{\ell-1} + 1, \ldots, n_\ell + n_{\ell-1} + \cdots +n_2 + 1-\ell +k).$$
\end{proof}

\begin{remark}
	Making the connection with Remark \ref{rmkBasesAffLiftRefArr}, it was also shown in Theorem \ref{thmW_Dkl}'s proof that the derivations in Lemma \ref{lem_vanDerMonde_der} 
	 corresponding to the rows of $N_{T'_1, \ldots, T'_{\ell-2}}$ together with $\mu_k$ is also a basis for $\rom{Der}( F_k)$.
	  However, it cannot be used as a basis that can be lifted to form part of a basis of $\rom{Der}(x_0 \cdot F_k^h)$ (see Proposition \ref{propConeConstructionFree}'s proof) since their 
	  sum of degrees, as one can check, is in general not equal to $\rom{deg}(F_k)$.
\end{remark}

\begin{example}
	To illustrate with a family of free divisors in $\mathbb{P}^3$, giving explicit equations, take $\ell = 3$. By Theorem \ref{thmGeneralizedW}, for any positive integers $n_1, n_2, n_3$, 
	the divisor $\mathcal{D}_3^3(\{n_i\})$ in the weighted projective plane
	 $\mathbb{P}^2_\omega$, defined by the (weighted homogeneous) polynomial $$F = xyz(x^{n_1} - y^{n_2})(x^{n_1} - z^{n_3})(y^{n_2} - z^{n_3})$$ is free with weighted exponents 
	 $(n_i\omega_i + 1, 2n_i\omega_i + 1)$, where the weight vector $\omega = (\omega_1, \omega_2, \omega_3) = (n_2 n_3, n_1 n_3, n_1 n_2)$. 
	 
	 Hence, by Corollary \ref{corAffremovelines}, the 
	 corresponding divisor in affine $3$-space $\mathbb{A}^3$, also denoted $\mathcal{D}_3^3(\{n_i\})$ and defined by $F$ is free. Furthermore, denote by 
	 $\mathcal{D}_3^2(\{n_i\}), \mathcal{D}_3^1(\{n_i\})$ and $\mathcal{D}_3^0(\{n_i\})$, the divisors defined by the polynomials $xyf_{n_i}$, $xf_{n_i}$ and $f_{n_i}$ respectively, 
	 where $f_{n_i} := (x^{n_1} - y^{n_2})(x^{n_1} - z^{n_3})(y^{n_2} - z^{n_3}).$ By Corollary \ref{corAffremovelines}, $\mathcal{D}_3^2(\{n_i\}), \mathcal{D}_3^1(\{n_i\})$ and 
	 $\mathcal{D}_3^0(\{n_i\})$ are also free divisors in $\mathbb{A}^3$.
	 
	 By Theorem \ref{corProjremovelines}, letting the new variable $x_0 = t$ over $\kk[x,y,z,t]$, the corresponding divisors in standard projective $3$-space $\mathbb{P}^3$ obtained 
	 from the cone construction, are free with exponents given in the following table, for positive integers $n_1, n_2, n_3$ such that $n_1 \le n_2 \le n_3$:
	 \begin{center}
	 \begin{tabular}{ |p{1.3cm}|p{8.8cm}|p{3.5cm}|  }
	 	\hline
	 	\multicolumn{3}{|c|}{Free divisors in $\mathbb{P}^3$} \\
	 	\hline
	 	Divisor& Polynomial&Free exponents\\
	 	\hline
	 	$\widetilde{\mathcal{D}}_3^3(\{n_i\})$  & $txyz(x^{n_1}t^{n_2-n_1} - y^{n_2})(x^{n_1}t^{n_3-n_1} - z^{n_3})(y^{n_2}t^{n_3-n_2} - z^{n_3})$ 
	 	&$(1, n_3 + 1, n_3 + n_2+1)$\\
		$\widetilde{\mathcal{D}}_3^2(\{n_i\})$  & $txy(x^{n_1}t^{n_2-n_1} - y^{n_2})(x^{n_1}t^{n_3-n_1} - z^{n_3})(y^{n_2}t^{n_3-n_2} - z^{n_3})$ 
		&$(1, n_3 + 1, n_3 + n_2)$\\
		$\widetilde{\mathcal{D}}_3^1(\{n_i\})$  & $tx(x^{n_1}t^{n_2-n_1} - y^{n_2})(x^{n_1}t^{n_3-n_1} - z^{n_3})(y^{n_2}t^{n_3-n_2} - z^{n_3})$ 
		&$(1, n_3 + 1, n_3 + n_2-1)$\\
	 	$\widetilde{\mathcal{D}}_3^0(\{n_i\})$  & $t(x^{n_1}t^{n_2-n_1} - y^{n_2})(x^{n_1}t^{n_3-n_1} - z^{n_3})(y^{n_2}t^{n_3-n_2} - z^{n_3})$
	 	&$(1, n_3 + 1, n_3 + n_2-2)$\\
	 	\hline
	 \end{tabular}
	 \end{center}
\end{example}

\subsection{New free divisors in $\mathbb{P}^3$, a variant of Brieskorn-Pham polynomials example in \cite{Buchweitz_Conca_2012}}

In \cite{Buchweitz_Conca_2012}, Buchweitz and Conca studied triangular free divisors, which are divisors whose Saito matrix has a lower ``triangular'' shape and showed in particular that 
for $G_j = x_1^{r_1} + \cdots + x_j^{r_j}$ with $j = 2, \ldots, i$ and any positive integers $r_1, \ldots, r_i$,
the product $G_2 \cdots G_i$ of Brieskorn-Pham polynomials is a free divisor in \cite[Example $5.3$]{Buchweitz_Conca_2012}. We present a variant of their example and to our best 
knowledge new: $F_\Lambda$ is a product of Brieskorn-Pham polynomials that defines a family of free divisors that can be proved using the weighted Multiple eigenschemes techniques of 
Section \ref{secwME}.

The divisor defined by the reduced weighted homogeneous polynomial
$$F_\Lambda = (x^{r_0} + y^{r_1})\prod_{\alpha \in \Lambda} (x^{r_0} + y^{r_1} + \alpha z^{r_2}),$$ where $r_i$ are 
positive integers for $i = 0,1,2$ and $\Lambda$ a finite set of distinct non-zero elements $\alpha \in \kk$ in a field of char $0$ is a free divisor in
$\mathbb{P}^2_\omega$, as we shall prove shortly, with weighted exponents given below.

In particular, the homogeneous case gives a divisor in $\mathbb{P}^2$ defined by the reduced homogeneous polynomial 
$$F = (x^{r} + y^{r})\prod_{\alpha \in \Lambda} (x^{r} + y^{r} + \alpha z^{r})$$ that is free for all $r \in \mathbb{N}^*$ with exponents $(r-1, r|\Lambda|)$, where $|\Lambda|$ is the 
cardinality of the set. $F_\Lambda$ is a posteriori the corresponding weighted homogeneous polynomial of $F$  by changing the nonstandard $\mathbb{Z}$-grading on the coordinate ring 
$R = \kk[x,y,z]$.

\begin{theorem} \label{thmWBrieskorn-Pham}
	Let $\Lambda$ be a finite set of distinct non-zero elements $\alpha \in \kk$ in a field of char $0$. The divisor in $\mathbb{P}^2_\omega$ defined by the reduced weighted 
	homogeneous polynomial
	$$F_\Lambda = (x^{r_0} + y^{r_1})\prod_{\alpha \in \Lambda} (x^{r_0} + y^{r_1} + \alpha z^{r_2}),$$ 
	is free with weighted exponents 
	$(\omega_i r_i + 1 - \omega_0 - \omega_1, |\Lambda|\omega_i r_i  - \omega_2 + 1 ),$
	for any $i \in \{0,1,2\}$ and $r_0, r_1, r_2$ are any positive integers.
\end{theorem}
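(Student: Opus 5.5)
The plan is to apply Theorem~\ref{propMEscheme} with $n=2$. In that case the weighted Multiple eigenscheme is the weighted eigenscheme $E(T_1)$ of a single tensor, so I need one well-chosen weighted homogeneous derivation $\delta_1\in\rom{Der}(F_\Lambda)$. Weighted homogeneity of $F_\Lambda$ forces $\omega_0 r_0=\omega_1 r_1=\omega_2 r_2=:N$. Then $\rom{deg}_\omega(F_\Lambda)=d=N(1+|\Lambda|)$, and this is why the exponents may be written with any index $i$. Set $G:=x^{r_0}+y^{r_1}$. Every factor of $F_\Lambda$ has the form $G$ or $G+\alpha z^{r_2}$, so I will take the Hamiltonian-type derivation
$$\delta_1 = r_1 y^{r_1-1}\partial_x - r_0 x^{r_0-1}\partial_y .$$
It satisfies $\delta_1(G)=0$ and $\delta_1(z)=0$. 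Hence $\delta_1$ annihilates every factor, and by \eqref{eqDerIntersection} it lies in $\rom{Der}_0(F_\Lambda)\subset\rom{Der}(F_\Lambda)$. Both components have weighted degree $d_1=N-\omega_1+(1-\omega_0)=N+1-\omega_0-\omega_1$ by Definition~\ref{defGradingDerS}. So $\delta_1$ is weighted homogeneous of degree $\omega_i r_i+1-\omega_0-\omega_1$.

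Next I compute the ideal of $2\times 2$ minors of
$$M_{T_1}=\begin{pmatrix}\omega_0x & \omega_1 y & \omega_2 z\\ r_1y^{r_1-1} & -r_0x^{r_0-1} & 0\end{pmatrix}.$$
The three minors are $-(\omega_0r_0x^{r_0}+\omega_1r_1y^{r_1})=-NG$, $-\omega_2 r_1 zy^{r_1-1}$, and $\omega_2r_0zx^{r_0-1}$. Because $\rom{char}\,\kk=0$, the scalars $N,\omega_2 r_0,\omega_2 r_1$ are units. The gcd of the last two minors is $z$ (up to a unit), and $z\nmid G$, so the gcd of all three is $1$. Lemma~\ref{lemGcdCodim} then gives that $E(T_1)$ has codimension $2$. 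Since $G\in I_{E(T_1)}$ and $G\mid F_\Lambda$, we get $F_\Lambda\in I_{E(T_1)}$.

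Theorem~\ref{propMEscheme} then yields freeness with weighted exponents $(d_1,\ d+\sum_{i=0}^2(1-\omega_i)-d_1-1)$. Substituting the values gives
$$N(1+|\Lambda|)+3-\omega_0-\omega_1-\omega_2-(N+1-\omega_0-\omega_1)-1 = N|\Lambda|-\omega_2+1.$$
This is $|\Lambda|\omega_ir_i-\omega_2+1$, as claimed.

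The step needing the most care is the hypothesis of Theorem~\ref{propMEscheme} that $\delta_1$ is a \emph{generator} of $\rom{Der}(F_\Lambda)$, i.e.\ part of a minimal homogeneous generating set with $\delta_{\mathcal{E}_\omega}$. I would argue it from the codimension-two condition. Suppose $\delta_1$ were an $S$-combination $a\,\delta_{\mathcal{E}_\omega}+\sum b_j\eta_j$ of lower-degree elements. Since $r_1y^{r_1-1}$ and $r_0x^{r_0-1}$ have no common factor, the vector $\delta_1$ is primitive, and its wedge with the Euler vector has unit-gcd minors. On the other hand, the only lower-degree candidates would have to be tangent to all the factors, and I would check that these are multiples of $\delta_{\mathcal{E}_\omega}$, which contradicts the independence of $\delta_1$ from $\delta_{\mathcal{E}_\omega}$. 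In practice, the converse direction of the proof of Theorem~\ref{propMEscheme} only uses $\delta_1\in\rom{Der}(F_\Lambda)$ together with the gcd condition. It builds a Saito matrix with rows $\delta_{\mathcal{E}_\omega},\delta_1,\mu$ and determinant $F_\Lambda$, so this subtlety does not obstruct the conclusion.
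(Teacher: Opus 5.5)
Your proposal is correct and follows essentially the same route as the paper. Both use the same derivation $\delta_1$ (up to sign), show that the weighted eigenscheme $E(T_1)$ has codimension $2$ via Lemma~\ref{lemGcdCodim}, get $F_\Lambda\in I_{E(T_1)}$ because $x^{r_0}+y^{r_1}$ divides it, and then apply Theorem~\ref{propMEscheme} with the same degree bookkeeping. Your minors $zy^{r_1-1}, zx^{r_0-1}$ are in fact the correct ones (the paper lists $zy^{r_1}, zx^{r_0}$). You are also right that only the converse direction of Theorem~\ref{propMEscheme} is used, and it needs just $\delta_1\in\rom{Der}(F_\Lambda)$ and the gcd condition, so your sketched argument that $\delta_1$ is a minimal generator is unnecessary.
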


\begin{proof}
	
Observe that $F_\Lambda$ is reduced and weighted homogeneous w.r.t. the weight $\omega = (\omega_0, \omega_1, \omega_2) = (r_1r_2, r_0r_2, r_0r_1)$ for instance, and one considers the 
weighted homogeneous derivation $$\delta_1 = - r_1 y^{r_1 - 1} \partial_x + r_0 x^{r_0 - 1}\partial_y.$$ The later satisfies 
$\delta_1(x^{r_0} + y^{r_1} + \alpha z^{r_2}) = \delta_1(x^{r_0} + y^{r_1}) = 0$ for all $\alpha \in \Lambda$, which implies $\delta_1 \in \rom{Der}_0(F_\Lambda)$. The 
weighted eigenscheme $E(T_1)$ of the associated tensor $T_1=(- r_1 y^{r_1 - 1}, r_0 x^{r_0 - 1}, 0)$ of $\delta_1$ is the closed subscheme of $\mathbb{P}^2_\omega$, defined by 
the ideal $I_{E(T_1)}$ of maximal minors of the matrix
$$M_{T_1} = \begin{pmatrix}
	\omega_0 x & \omega_1 y & \omega_2 z\\
	- r_1 y^{r_1 - 1} & r_0 x^{r_0 - 1} & 0
\end{pmatrix}.$$
Hence $I_{E(T_1)} = (x^{r_0} + y^{r_1}, zy^{r_1}, zx^{r_0})$ has codimension $2$ by Lemma \ref{lemGcdCodim}. Using Theorem \ref{propMEscheme}, since
$F_\Lambda \in I_{E(T_1)}$, this implies that the divisor $V(F_\Lambda)$ is free with weighted exponents
$$(d_1, \rom{deg}_\omega(F_\Lambda )+ \sum_{i=0}^2 (1-\omega_i) - d_1 -1),$$ where 
$d_1 := \rom{deg}_\omega(\delta_1) = \omega_i r_i + 1 - \omega_0 - \omega_1$, and $\rom{deg}_\omega(F_\Lambda ) = (|\Lambda|+1)\omega_i r_i$
for any $i \in \{0,1,2\},$ so $\rom{deg}_\omega(F_\Lambda )+ \sum_{i=0}^2 (1-\omega_i) - d_1 -1 = |\Lambda|\omega_i r_i  - \omega_2 + 1.$
\end{proof}

	By Lemma \ref{lemFreeWeightedEqualFreeAffine}, it follows that $F_\Lambda$ also defines a free divisor in $\mathbb{A}^3$.
	
	\begin{corollary} \label{corBrieskorn-Pham}
		Let $\Lambda$ be any finite set of non-zero distinct elements over the base field $\kk$. The divisor in $\mathbb{A}^3$ defined by the polynomial
		$$F_\Lambda = (x^{r_0} + y^{r_1})\prod_{\alpha \in \Lambda} (x^{r_0} + y^{r_1} + \alpha z^{r_2})$$
		in $\kk[x,y,z]$ is free for all positive integers $r_0, r_1, r_2.$
	\end{corollary}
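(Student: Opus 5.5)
The statement is Corollary \ref{corBrieskorn-Pham}, and it follows directly from Theorem \ref{thmWBrieskorn-Pham} together with Lemma \ref{lemFreeWeightedEqualFreeAffine}.

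First, choose the weight vector $\omega=(r_1r_2,\,r_0r_2,\,r_0r_1)$ on $\kk[x,y,z]$. With this choice each monomial $x^{r_0}$, $y^{r_1}$, $z^{r_2}$ has weighted degree $r_0r_1r_2$. Hence every factor of $F_\Lambda$ is weighted homogeneous, and so is $F_\Lambda$, of degree $(|\Lambda|+1)r_0r_1r_2$.

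Second, check that $F_\Lambda$ is reduced, which is needed to apply Theorem \ref{thmWBrieskorn-Pham}. The factors are pairwise coprime, since two of them differ by a nonzero multiple of $z^{r_2}$ (the $\alpha$ are distinct and nonzero). It remains to see that each factor is squarefree.
\begin{itemize}
\item For $x^{r_0}+y^{r_1}+\alpha z^{r_2}$ with $\alpha\neq 0$, I would argue via the Jacobian. A repeated factor $g$ would divide all three partials $r_0x^{r_0-1}$, $r_1y^{r_1-1}$ and $r_2\alpha z^{r_2-1}$, which have no common nonconstant factor.
\item For $x^{r_0}+y^{r_1}$, the same argument works in $\kk[x,y]$. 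In characteristic $0$ a repeated factor would divide both $x^{r_0-1}$ and $y^{r_1-1}$, which is impossible.
\end{itemize}

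Third, apply Theorem \ref{thmWBrieskorn-Pham}. It shows that $V(F_\Lambda)\subset\mathbb{P}^2_\omega$ is free, that is, $\rom{Der}(F_\Lambda)$ is a free graded $\kk[x,y,z]$-module. Lemma \ref{lemFreeWeightedEqualFreeAffine} then says that forgetting the grading keeps the module free over the same ring. Since $\kk[x,y,z]$ is also the affine coordinate ring of $\mathbb{A}^3$, the divisor $V(F_\Lambda)\subset\mathbb{A}^3$ is free.

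There is no real obstacle here. The only point needing care is the standing characteristic-$0$ hypothesis on $\kk$ from Section \ref{secPrelim}, which both the reducedness check and Theorem \ref{thmWBrieskorn-Pham} rely on.
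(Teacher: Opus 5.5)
Your proposal is correct and follows the paper's route exactly. Theorem \ref{thmWBrieskorn-Pham}, with $\omega=(r_1r_2,r_0r_2,r_0r_1)$, gives freeness of $V(F_\Lambda)$ in $\mathbb{P}^2_\omega$, and Lemma \ref{lemFreeWeightedEqualFreeAffine} then transfers freeness to $\mathbb{A}^3$ by forgetting the grading. Your added reducedness check, which the paper only asserts, is fine; for pairwise coprimality you should also say explicitly that $z$ divides none of the factors, since setting $z=0$ leaves $x^{r_0}+y^{r_1}\neq 0$.
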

	
	By the cone construction, one can consider the corresponding divisor in $\mathbb{P}^3$ defined by the reduced homogeneous polynomial $t \cdot F_\Lambda^h \in \kk[x,y,z,t]$. As earlier, denote
	by $\_^h$ homogenization in the variable $t$. We have the following theorem.
	
	\begin{theorem} \label{thmBrieskorn-PhamProj}
		Let $\Lambda$ and $F_\Lambda$ be as defined above and $r_0, r_1, r_2$ any positive integers. 
		Then the divisor in $\mathbb{P}^3$ defined by the homogeneous polynomial
		$t \cdot F_\Lambda^h \in \kk[x,y,z,t]$
		is free with exponents $(1, \rom{max}\{r_0,r_1\} - 1, |\Lambda|\cdot \rom{max}\{r_0,r_1, r_2\} ).$
		
		In particular, if $r_0 \le r_1 \le r_2$, then the divisor defined in $\mathbb{P}^3$ by
		$$ t (t^{r_1-r_0}x^{r_0} + y^{r_1})\prod_{\alpha \in \Lambda} (t^{r_2-r_0}x^{r_0} + t^{r_2-r_1}y^{r_1} + \alpha z^{r_2})$$
		is free with exponents $(1,r_1 - 1, |\Lambda|r_2)$.
	\end{theorem}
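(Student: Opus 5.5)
The plan is to apply Proposition \ref{propConeConstructionFree} to the affine free divisor $V(F_\Lambda)\subset\mathbb{A}^3$ of Corollary \ref{corBrieskorn-Pham}. For that I need an explicit basis of $\rom{Der}(F_\Lambda)\subset\rom{Der}_{\kk}(\kk[x,y,z])$ whose \emph{standard} degrees add up to $\rom{deg}(F_\Lambda)$. Since $F_\Lambda$ is reduced with $|\Lambda|+1$ factors, $\rom{deg}(F_\Lambda)=\max\{r_0,r_1\}+|\Lambda|\max\{r_0,r_1,r_2\}$. Here the degree of a derivation means the maximal standard degree of its coefficients, i.e.\ the degree of its homogenization. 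I would use a basis consisting of three derivations, $\delta_{\mathcal{E}_\omega}$, $\delta_1$ and $\mu$, built as follows.

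The first two come from the proof of Theorem \ref{thmWBrieskorn-Pham}. Take $\omega=(r_1r_2,r_0r_2,r_0r_1)$. The weighted Euler derivation $\delta_{\mathcal{E}_\omega}=\omega_0x\partial_x+\omega_1y\partial_y+\omega_2z\partial_z$ has standard degree $1$. The derivation $\delta_1=-r_1y^{r_1-1}\partial_x+r_0x^{r_0-1}\partial_y$ has standard degree $\max\{r_0,r_1\}-1$. For the third, rather than invoking Theorem \ref{propMEscheme} abstractly, I would write down the row $T_2=(a,b,c)$ that completes $M_{T_1}$ to a Saito matrix. Put $G=x^{r_0}+y^{r_1}$, $P=\prod_{\alpha\in\Lambda}(G+\alpha z^{r_2})$ and $N=r_0r_1r_2$, so that $\omega_0r_0=\omega_1r_1=N$. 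Expanding the determinant along the new row, the cofactor of $c$ is $\omega_0r_0x^{r_0}+\omega_1r_1y^{r_1}=NG$. Hence the choice $a=b=0$, $c=P/N$ gives
\[\rom{det}\begin{pmatrix}\omega_0x&\omega_1y&\omega_2z\\ -r_1y^{r_1-1}&r_0x^{r_0-1}&0\\ 0&0&P/N\end{pmatrix}=GP=F_\Lambda.\]

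Next I check that $\mu:=\frac1N P\,\partial_z$ lies in $\rom{Der}(F_\Lambda)$. This is direct: $\partial_zF_\Lambda=G\,\partial_zP$, so $\mu(F_\Lambda)=\frac1N GP\,\partial_zP=\frac1N(\partial_zP)\,F_\Lambda$. The other two derivations are already in $\rom{Der}(F_\Lambda)$. Saito's criterion (with determinant exactly $F_\Lambda$) then shows that $\{\delta_{\mathcal{E}_\omega},\delta_1,\mu\}$ is a basis of $\rom{Der}(F_\Lambda)$ over $\kk[x,y,z]$. The standard degree of $\mu$ is $\rom{deg}P=|\Lambda|\max\{r_0,r_1,r_2\}$, so the three degrees sum to
\[1+(\max\{r_0,r_1\}-1)+|\Lambda|\max\{r_0,r_1,r_2\}=\rom{deg}(F_\Lambda).\]
Proposition \ref{propConeConstructionFree} now gives freeness of $V(tF_\Lambda^h)$ with exponents $(1,\max\{r_0,r_1\}-1,|\Lambda|\max\{r_0,r_1,r_2\})$. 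Writing out $F_\Lambda^h$ when $r_0\le r_1\le r_2$ gives the displayed particular case, with exponents $(1,r_1-1,|\Lambda|r_2)$.

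The main obstacle I expect is the degree bookkeeping. The derivations are weighted homogeneous but not standard homogeneous, so I must check that the degrees used in Proposition \ref{propConeConstructionFree} are the degrees of the homogenizations. This is harmless for $\delta_1$, whose coefficients $y^{r_1-1}$ and $x^{r_0-1}$ homogenize with a power of $t$. It also requires a short argument that $\rom{deg}(F_\Lambda)$ is additive over its factors, which is where the $\max$ in the exponents comes from. One degenerate case needs care: if $\max\{r_0,r_1\}=1$, then $\delta_1$ has degree $0$. The proof of Proposition \ref{propConeConstructionFree} still goes through, since it uses only linear independence and the degree count.
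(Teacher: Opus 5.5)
Your proof is correct, and it takes a different route from the paper.

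\textbf{The paper's route.} The paper works directly in $\mathbb{P}^3$ with the eigenscheme criterion, and only for $r_0\le r_1\le r_2$. It homogenizes just $\delta_{\mathcal{E}_\omega}$ and $\delta_1$ and forms the $3\times 4$ ME-scheme matrix with the Euler row $(x,y,z,t)$. It then computes the four maximal minors, checks that their gcd is $1$ (so the ME-scheme has codimension $2$ by Lemma~\ref{lemGcdCodim}), and notes that $tF_\Lambda^h$ is a multiple of the minor $t(t^{r_1-r_0}x^{r_0}+y^{r_1})$. Finally it invokes \cite[Proposition $3.3$]{digennaro2025saitostheoremrevisitedapplication}. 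The third exponent comes from a degree count, and the third basis element is never written down.

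\textbf{Your route.} You write the missing derivation down explicitly. The minor of $M_{T_1}$ on the $x,y$ columns is $NG$, and $F_\Lambda=G\cdot P$ with $G$ independent of $z$. So $\mu=\frac1N P\,\partial_z$ lies in $\rom{Der}(F_\Lambda)$ and completes a block-triangular affine Saito matrix with determinant exactly $F_\Lambda$. Proposition~\ref{propConeConstructionFree} then does the lifting, and your degree bookkeeping is right.

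\textbf{What each buys.} Your argument produces the explicit basis $\{\delta_{\mathcal{E}},\delta_{\mathcal{E}_\omega},\delta_1^h,\frac1N P^h\partial_z\}$ of $\rom{Der}(tF_\Lambda^h)$. It needs neither the minor computation nor the external criterion. It also covers every ordering of $r_0,r_1,r_2$ at once, together with the edge cases $r_0=r_1=1$ and $\Lambda=\emptyset$; the paper leaves the other orderings to the reader. The paper's route showcases the ME-scheme technique, which still applies when the complementary derivation is not as easy to guess as it is here, where the product structure of $F_\Lambda$ hands it to you.
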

	
	\begin{proof}
	Suppose $r_0 \le r_1 \le r_2$ (the proofs of the other orderings, being similar to the present one, are left to the interested reader). 
	Let $\delta_1 = - r_1 y^{r_1 - 1} \partial_x + r_0 x^{r_0 - 1}\partial_y$ and the weighted Euler derivation $\delta_{\mathcal{E}_\omega}$ w.r.t. $\omega = (r_1r_2, r_0r_2, r_0r_1)$  
	be as in the proof of Theorem \ref{thmWBrieskorn-Pham}.
	By Corollary \ref{corHomogenizingDerv}, $\delta_{\mathcal{E}_\omega}$
	and $\delta_1^h$ are in $\rom{Der}(F_\Lambda^h)$, which implies $\delta_{\mathcal{E}_\omega},\delta_1^h \in \rom{Der}(t\cdot F_\Lambda^h)$ since the derivations have no term in $\partial_t$. 
	The ME-scheme matrix is
	$$M_{T_1, T_2} = \begin{pmatrix}
		x & y & z & t\\
		r_1 r_2 x & r_0 r_2 y & r_0 r_1 z & 0\\
		-r_1 y^{r_1-1} & r_0 t^{r_1 - r_0} x^{r_0 -1} &0&0
	\end{pmatrix},$$
	and 
	\begin{align*}
		I_{E(T_1, T_2)} = \langle &t(t^{r_1 -r_0}x^{r_0} + y^{r_1}),\\
		 &t^{r_1-r_0+1}zx^{r_0 - 1},\\
		 &tzy^{r_1 -1},\\ 
		 &z(r_0r_1r_2 t^{r_1 -r_0}x^{r_0} + r_0r_1r_2 y^{r_1}) - r_0 r_1 z(r_0 t^{r_1-r_0}x^{r_0} + r_1 y^{r_1})\rangle\\
		 =: &(m_1,m_2,m_3,m_4).
	\end{align*}
	Note that $\rom{gcd}(m_1,m_2,m_3,m_4) = 1,$ which implies by Lemma \ref{lemGcdCodim} that $I_{E(T_1, T_2)}$ has $\text{codimension} \, 2$.
	Since $t \cdot F_\Lambda^h = t(t^{r_1 -r_0}x^{r_0} + y^{r_1}) \prod_{\alpha \in \Lambda} (t^{r_2 - r_0}x^{r_0} + t^{r_2 - r_1}y^{r_1} + \alpha z^{r_2})$, clearly 
	$t \cdot F_\Lambda^h \in I_{E(T_1, T_2)},$ and by \cite[Proposition $3.3$]{digennaro2025saitostheoremrevisitedapplication}, the divisor defined by $t \cdot F_\Lambda^h$ is free with exponents 
	$(1,r_1 - 1, |\Lambda|r_2)$. 
	\end{proof}

\subsection{New free divisors coming from pencils of hypersurfaces example in \cite{digennaro2025saitostheoremrevisitedapplication}}

	Let $R = \kk[x_0, \ldots, x_n]$ denote the coordinate ring for $\mathbb{P}^n_\omega$ and $\mathbb{A}^{n+1}$ and $S = \kk[x_0, \ldots, x_n, x_{n+1}]$ for $\mathbb{P}^{n+1}$,
	where $\kk$ is a field of char $0$.

	In \cite[Theorem $4.2$]{digennaro2025saitostheoremrevisitedapplication}, the following family of free divisors in $\mathbb{P}^n$ is given:
	starting with the free hyperplane arrangement $\mathcal{A} : x_0 \cdots x_n = 0$, one considers its Jacobian ideal
	$$J = (h_0, h_1, \ldots, h_n),$$
	where $h_i := x_0 \cdots \hat{x_i}\cdots x_n$, for $0 \le i \le n$, with singular locus the ${{n+1}\choose{2}} $ codimension two
	faces of the hypertetrahedron. Fix $m_0$ such that $n = 2m_0 + \epsilon$ where $\epsilon = 0$ or $1$. Then the defining polynomials of two hypersurfaces
	$S_1$ and $S_2$ are given respectively as
	$$f_1 = \sum_{i = m_0+1}^n h_i \text{ and } f_2 = \sum_{i = 0}^{m_0} h_i.$$
	Furthermore, consider the pencil of hypersurfaces $C(f_1,f_2) = \{ S_{a,b} = a S_1 + b S_2\}_{[a;b] \in \mathbb{P}^1}$ of degree $n$.

	 In \cite[Theorem $4.2$]{digennaro2025saitostheoremrevisitedapplication}, Di Gennaro and Miró-Roig showed that the divisors in $\mathbb{P}^n$ 
	 \begin{enumerate}
	 	\item $S_1S_2$ defined by $f_1f_2$ is free with exponents $(1, 2,\cdots, 2)$, and
	 	\item $S_1S_2 \prod_{i = 3}^k (a_i S_1 + b_i S_2)$, defined by $f_1f_2\prod_{i = 3}^k (a_i f_1 + b_i f_2)$, where $[a_i;b_i] \in  \mathbb{P}^1$, $k \ge 3$
	 	and $a_i S_1 + b_i S_2$ are generic members of the pencil $C(f_1,f_2)$, is free with exponents $(2,\cdots, 2, n(k- 2) + 1)$.\\
	 \end{enumerate}
	 
	 One way to generalize example \cite[Theorem $4.2$]{digennaro2025saitostheoremrevisitedapplication} is to let $m_0$ vary between $0$ and
	  $n-1$, and to take the $r^{th}$ power of each variable where $r$ is any positive integer, i.e. let 
	 $$h'_i := x_0^r \cdots \widehat{x_i^r}\cdots x_n^r,$$ 
	 for $0 \le i \le n$ and $r \in \mathbb{Z}_{>0}$, and for \textit{any} choice of $m$ where $0 \le m \le n-1$, we define 
	 $$f'_1 = \sum_{i = m+1}^n h'_i \text{ and } f'_2 = \sum_{i = 0}^{m} h'_i,$$
	the two homogeneous polynomials of degree $rn$ of the hypersurfaces $S'_1$ and $S'_2$ respectively.\\
	
	\begin{notation}
		For a polynomial $f$, denote $f^{\rom{red}}$ its reduced polynomial, i.e. product of square-free irreducible factors.
		Similarly, for a divisor $D$, denote $D^{\rom{red}}$ its corresponding reduced divisor. 
	\end{notation} 
	
	\begin{corollary} \label{corf1f2powerN}
		With the above notation, the divisors in $\mathbb{P}^n$
		\begin{enumerate}
			\item $(S'_1S'_2)^{\rom{red}}$ defined by 
			$$(f'_{1}f'_{2})^{\rom{red}} = x_0 \cdots x_n 
			\left( \sum_{i = 0}^m x_{0}^{r} \cdots \widehat{x_i^{r}}\cdots x_m^{r} \right)
			\left( \sum_{i = m+1}^n x_{m+1}^{r} \cdots \widehat{x_i^{r}}\cdots x_n^{r} \right)$$
			 is free with exponents $(1, r+1,\cdots, r+1)$, and
			\item $(S'_1S'_2)^{\rom{red}} \prod_{i = 3}^k (a_i S'_1 + b_i S'_2)$, defined by $$(f'_1f'_2)^{\rom{red}}\prod_{i = 3}^k (a_i f'_1 + b_i f'_2),$$ for $k \ge 3$, $[a_i;b_i] \in  \mathbb{P}^1$,
			and $a_i S'_1 + b_i S'_2$ being generic members of the pencil $C(f'_1,f'_2)$, is free with exponents $(r+1,\cdots, r+1, rn(k - 2) + 1)$.\\
		\end{enumerate}
	\end{corollary}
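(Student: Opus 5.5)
The plan is to deduce both statements from the case $r=1$ by the Kummer-type substitution $x_i\mapsto x_i^r$, and to settle the case $r=1$ separately for an arbitrary $m$. First rewrite the polynomials. For $i\le m$ the monomial $h_i$ is divisible by $x_{m+1}\cdots x_n$, so
$$f_2=x_{m+1}\cdots x_n\, e_{m}(x_0,\ldots,x_m),$$
where $e_m$ is the elementary symmetric polynomial of degree $m$ in $x_0,\ldots,x_m$. Similarly
$$f_1=x_0\cdots x_m\, e_{n-m-1}(x_{m+1},\ldots,x_n).$$
Hence $(f_1f_2)^{\rom{red}}=x_0\cdots x_n\,g$ with $g=e_m(x_0,\ldots,x_m)\,e_{n-m-1}(x_{m+1},\ldots,x_n)$. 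Moreover $f'_j=f_j(x_0^r,\ldots,x_n^r)$, so
$$(f'_1f'_2)^{\rom{red}}=x_0\cdots x_n\,g(x_0^r,\ldots,x_n^r),$$
which is exactly the displayed formula. In the same way $a_if'_1+b_if'_2=(a_if_1+b_if_2)(x^r)$. Each $a_if_1+b_if_2$ is irreducible and not a coordinate hyperplane for generic $[a_i;b_i]$, and this is the genericity hypothesis I would use.

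\emph{Pullback step.} Let $G=x_0\cdots x_n\,\phi$ be any reduced homogeneous polynomial containing all coordinate hyperplanes, with $V(G)$ free. Let $\delta=\sum_i a_i\partial_{x_i}$ be a homogeneous element of $\rom{Der}(G)$ of degree $e$. Since $\delta\in\rom{Der}(x_i)$, we can write $a_i=x_ib_i$. Define
$$\delta'=\sum_i \tfrac1r\,x_i\,b_i(x^r)\,\partial_{x_i}.$$
By the chain rule, $\delta'(P(x^r))=(\delta P)(x^r)$ for every polynomial $P$. Hence $\delta'\in\rom{Der}(G(x^r))$, and also $\delta'\in\rom{Der}(x_i)$ for all $i$, so $\delta'\in\rom{Der}(x_0\cdots x_n\,\phi(x^r))$. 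The coefficients of $\delta'$ have degree $r(e-1)+1$; in particular the Euler derivation goes to $\tfrac1r$ times the Euler derivation. For a Saito basis $\delta_0,\ldots,\delta_n$ of $G$, the new Saito matrix equals $\tfrac1r\,\rom{diag}(x_i)$ times the old matrix divided by $\rom{diag}(x_i)$ and evaluated at $x^r$. Its determinant is therefore
$$r^{-n-1}\,\frac{x_0\cdots x_n}{x_0^r\cdots x_n^r}\,G(x^r)=r^{-n-1}\,x_0\cdots x_n\,\phi(x^r).$$
By Saito's criterion (or equivalently Remark~\ref{rmkdeterminantalSaitoScheme}), $V(x_0\cdots x_n\phi(x^r))$ is free, with exponent $e$ sent to $r(e-1)+1$. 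Thus $1\mapsto1$, $2\mapsto r+1$ and $n(k-2)+1\mapsto rn(k-2)+1$, which gives exactly the exponents claimed in (i) and (ii).

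\emph{Base case, the main obstacle.} It remains to show that for $r=1$ and every $0\le m\le n-1$ (not only $m=m_0$), $f_1f_2$ is free with exponents $(1,2,\ldots,2)$, and that $f_1f_2\prod_{i=3}^k(a_if_1+b_if_2)$ is free with exponents $(2,\ldots,2,n(k-2)+1)$. I would redo the argument of \cite[Theorem $4.2$]{digennaro2025saitostheoremrevisitedapplication} using Theorem~\ref{propMEscheme} with all weights $1$. The needed derivations should be quadratic derivations preserving the pencil, built blockwise: within the block $\{0,\ldots,m\}$ use $x_j(x_j\partial_{x_j}-x_0\partial_{x_0})$-type combinations that kill $e_m(x_0,\ldots,x_m)$ modulo the product of variables, and do the analogous thing in the block $\{m+1,\ldots,n\}$.
\begin{enumerate}
\item For (i), take $n-1$ of these derivations and check via Lemma~\ref{lemGcdCodim} that the maximal minors of $M_{T_1,\ldots,T_{n-1}}$ have $\gcd$ equal to $1$. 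Then verify $f_1f_2\in I_{E(T_1,\ldots,T_{n-1})}$.
\item For (ii), take $n-1$ derivations of degree $2$ annihilating both $f_1$ and $f_2$ up to the same multiple, so that they preserve every member of the pencil. Membership of the full product in $I_E$ should then follow as in the cited proof.
\end{enumerate}
I expect the codimension-$2$ check for unbalanced blocks (for instance $m=0$, where $e_0=1$ and $f_2=x_1\cdots x_n$) to require a separate small computation, and this is where the argument could break.
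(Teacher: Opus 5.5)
\textbf{The base case $r=1$ is not established.} Your Kummer pullback step is correct, and it is a genuinely different route from the paper. The chain rule identity $\delta'(P(x^r))=(\delta P)(x^r)$, the determinant computation and the exponent rule $e\mapsto r(e-1)+1$ all check out. In fact the paper's derivations $\frac{1}{r_0}x_0^{r_0+1}\partial_{x_0}-\frac{1}{r_j}x_j^{r_j+1}\partial_{x_j}$ and $\sum_{i>m}\frac{1}{r_i}x_i\partial_{x_i}$ are exactly the pullbacks of the $r=1$ ones. You should add one line explaining why $x_0\cdots x_n\,\phi(x^r)$ is reduced, since Saito's criterion needs it: in characteristic $0$ the map $x\mapsto x^r$ is \'etale on the torus, and $\phi$ is coprime to the coordinates.

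However, the reduction only relocates the whole difficulty. \cite[Theorem $4.2$]{digennaro2025saitostheoremrevisitedapplication} covers only $m=m_0$, and you do not prove the case $r=1$ for arbitrary $m$. What you give is a plan with an admitted possible failure point, so as it stands the proposal does not establish the statement.

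\textbf{The derivations you suggest are the wrong ones.} Take $D=x_j(x_j\partial_{x_j}-x_0\partial_{x_0})$. It sends $e_m(x_0,\ldots,x_m)$ to $x_1\cdots x_m(x_j-x_0)$, which is not a multiple of $e_m$, so $D\notin\rom{Der}(f_1f_2)$.

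The missing idea is to use
\[
\delta_j=x_0^2\partial_{x_0}-x_j^2\partial_{x_j}\ (1\le j\le m), \qquad \eta_t=x_{m+1}^2\partial_{x_{m+1}}-x_t^2\partial_{x_t}\ (m+2\le t\le n).
\]
In the coordinates $y_i=1/x_i$ these are translations preserving $\sum y_i$. They satisfy $\delta_j(f_a)=(x_0-x_j)f_a$ and $\eta_t(f_a)=(x_{m+1}-x_t)f_a$ for $a=1,2$, so they preserve every member of the pencil. Add the Euler derivation and the partial Euler derivation $\mu=\sum_{i>m}x_i\partial_{x_i}$, which satisfies $\mu(f_1)=(n-m-1)f_1$ and $\mu(f_2)=(n-m)f_2$.

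After subtracting $\mu$ from the Euler row, the Saito matrix is block diagonal. Its block determinants are $\pm x_0\cdots x_m\,e_m(x_0,\ldots,x_m)$ and $\pm x_{m+1}\cdots x_n\,e_{n-m-1}(x_{m+1},\ldots,x_n)$. Saito's criterion then gives (i) with exponents $(1,2,\ldots,2)$ for every $m$; for $m=0$ or $m=n-1$ one block is simply $1\times 1$.

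For (ii), replace $\mu$ by $\mu\prod_{i=3}^k(a_if_1+b_if_2)$. This is still logarithmic, and the Saito determinant becomes $\pm$ the full reduced product. So (ii) follows directly from Saito's criterion, and no separate codimension-two check is needed. The paper instead concludes via $(f_1f_2)^{\rom{red}}\in I_E$ and Theorem \ref{propMEscheme}. This construction and block computation is the paper's proof, done there directly for arbitrary $r_i$.
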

	
	Corollary \ref{corf1f2powerN} follows from a more general theorem in weighted projective space $\mathbb{P}^n_\omega$, Theorem \ref{thmf1f2weighted}, which we prove below.
	As was the case previously, by changing the nonstandard $\mathbb{Z}$-grading on the ring $R$ allows one to consider the 
	corresponding weighted homogeneous polynomials $\widetilde{f_1}$ and $\widetilde{f_2}$:
	 
	 Let
	 $$\widetilde{h_i} := x_0^{r_0} \cdots \widehat{x_i^{r_i}}\cdots x_n^{r_n},$$
	 for $0 \le i \le n$ and $r_0, \ldots, r_n$ are any positive integers, and for \textit{any} choice of $m$ where $0 \le m \le n-1$, define
	 \begin{equation} \label{eqf1f2tilde}
	 	\widetilde{f_1} = \sum_{i = m+1}^n \widetilde{h_i} \text{ and } \widetilde{f_2} = \sum_{i = 0}^{m} \widetilde{h_i}.
	 \end{equation}
	 
	\begin{remark}
		For any $m \in \{0,1,\ldots, n-1\}$, and any $r_i \in \mathbb{Z}_{>0}$ for $0 \le i \le n$, 
		$\widetilde{f_1}$ and $\widetilde{f_2}$
		are weighted homogeneous polynomials for some weight vector. Moreover, there exist a weight vector $\omega$ such that
		$\widetilde{f_1}$ and $\widetilde{f_2}$ are of same weighted degree $\omega$.
		Indeed note that weight vector $\omega = (\omega_0, \ldots, \omega_n)$ with $\omega_i = r_0 \ldots \widehat{r_i} \ldots r_n$ for $0 \le i \le n$ works.
		
		Note that since $\widetilde{f_1} = \sum_{i = m+1}^n \widetilde{h_i}$ is the sum of $n-m$ monomials, each
		differing at one variable in $\{x_{m+1}, \ldots, x_n\}$, $\widetilde{f_1}$ is weighted homogeneous if and only if 
		\begin{equation*}r_i \omega_i = r_j \omega_j, \text{ for } m+1 \le i < j \le n, \end{equation*}
		and similarly for $\widetilde{f_2}$, we have
		\begin{equation*}r_i \omega_i = r_j \omega_j, \text{ for } 0 \le i < j \le m.\end{equation*}
		
		If in addition, $\widetilde{f_1}$ and $\widetilde{f_2}$ are weighted homogeneous of the same degree, then
		\begin{equation}r_i \omega_i = r_j \omega_j, \text{ for all } i,j \in \{0, \ldots, n\}. \label{eqromegaequal}\end{equation}
	\end{remark}
	\begin{remark} \label{rmkf1f2reduced}
		Moreover, note that by construction of $\widetilde{f_1}$ and $\widetilde{f_2}$, it follows that
		$ (\widetilde{f_1} \widetilde{f_2})^{\rom{red}} = \widetilde{f_1} \widetilde{f_2}/(x_0^{r_0-1} \cdots x_n^{r_n-1}).$
	\end{remark}

	\begin{theorem} \label{thmf1f2weighted} Choose $m \in \{0,1,\ldots, n-1\}$ and let $r_0, \ldots, r_n$ be any positive integers; this defines
		$\widetilde{f_1}$ and $ \widetilde{f_2}$ as constructed in (\ref{eqf1f2tilde}). 
		Let $\omega = (\omega_0, \ldots, \omega_n)$ be a weight vector such that
		 $\widetilde{f_1}$ and $\widetilde{f_2}$ are weighted homogeneous of the same degree $r_i\omega_i n$ for all $i$.
		\begin{enumerate}
			\item The divisor $(\widetilde{S_1}\widetilde{S_2})^{\rom{red}}$ in weighted projective space $\mathbb{P}^n_\omega$ defined by the weighted
			homogeneous polynomial
			$$(\widetilde{f_1}\widetilde{f_2})^{\rom{red}} = x_0 \cdots x_n 
			\left( \sum_{i = 0}^m x_{0}^{r_{0}} \cdots \widehat{x_i^{r_i}}\cdots x_m^{r_m} \right)
			\left( \sum_{i = m+1}^n x_{m+1}^{r_{m+1}} \cdots \widehat{x_i^{r_i}}\cdots x_n^{r_n} \right)$$
			is free with weighted exponents $(1, r_i\omega_i + 1, \ldots, r_i\omega_i+ 1)$.
			\item The divisor $(\widetilde{S_1}\widetilde{S_2})^{\rom{red}} \prod_{i = 3}^k (a_i \widetilde{S_1} + b_i \widetilde{S_2})$ in weighted projective space $\mathbb{P}^n_\omega$ 
			defined by the weighted homogeneous polynomial
			 $$\left( \widetilde{f_1}\widetilde{f_2}\right)^{\rom{red}}\prod_{i = 3}^k (a_i \widetilde{f_1} + b_i \widetilde{f_2}),$$
			  for $k \ge 3$, $[a_i;b_i] \in  \mathbb{P}^1$,
			and $a_i \widetilde{S_1} + b_i \widetilde{S_2}$ being generic members of the pencil $C(\widetilde{f_1},\widetilde{f_2})$, is free with
			 weighted exponents $$(r_i\omega_i +1,\cdots, r_i\omega_i +1, (k-2)nr_i\omega_i + 1).$$\\
			
		\end{enumerate}
	\end{theorem}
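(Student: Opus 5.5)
Set $c:=r_i\omega_i$, which is independent of $i$ by \eqref{eqromegaequal}, and write $D := (\widetilde{f_1}\widetilde{f_2})^{\rom{red}}$. Note $\rom{deg}_\omega(x_i) = c/r_i$ and $\rom{deg}_\omega(\widetilde{f_1}) = \rom{deg}_\omega(\widetilde{f_2}) = nc$. The plan for both parts is Theorem \ref{propMEscheme}: exhibit $n-1$ weighted homogeneous derivations of degree $c+1$ in the relevant $\rom{Der}$, check the codimension of their wME-scheme with Lemma \ref{lemGcdCodim}, and show the defining polynomial lies in the ideal of maximal minors. The exponents then follow from the formula in Theorem \ref{propMEscheme}.

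\textbf{Candidate derivations.} I would take, for pairs $(j,j')$ in the same block (both $\le m$ or both $\ge m+1$),
$$\theta_{j,j'} = \omega_j x_j^{r_j+1}\partial_{x_j} - \omega_{j'}x_{j'}^{r_{j'}+1}\partial_{x_{j'}},$$
which has weighted degree $c+1$. It preserves each $x_q$ up to a multiple, so it lies in every $\rom{Der}(x_q)$. For the sums, $\theta_{j,j'}(\widetilde h_q) = c\,\widetilde h_q\,(x_j^{r_j}-x_{j'}^{r_{j'}})$ when $q\ne j,j'$, while $\theta_{j,j'}(\widetilde h_j) = -c\,\widetilde h_j\, x_{j'}^{r_{j'}}$ and $\theta_{j,j'}(\widetilde h_{j'}) = c\,\widetilde h_{j'}\, x_j^{r_j}$. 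Using $\widetilde h_j x_j^{r_j} = \widetilde h_{j'} x_{j'}^{r_{j'}}$, the last two terms cancel. Hence $\theta_{j,j'}$ sends the block sum $\sum_{q\ne j,j'}\widetilde h_q$ to a multiple of itself, so it preserves the block's sum.

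This does not yet give $\theta_{j,j'}\in\rom{Der}(\widetilde f_1)$, because the multiplier must be uniform over all terms including $q=j,j'$. So I would adjust, for instance to
$$\theta'_{j,j'} = \theta_{j,j'} - c\,(x_j^{r_j}-x_{j'}^{r_{j'}})\,\frac{\delta_{\mathcal E_\omega}}{nc}$$
or a variant with $\delta_{\mathcal E_\omega}$ restricted to the block, and then verify by direct computation that it is logarithmic for both $\widetilde f_1$ and $\widetilde f_2$.

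\textbf{Choice of generators and part (i).} Choose $m$ such derivations from the block $\{0,\dots,m\}$ (consecutive pairs) and $n-m-1$ from the other block, $n-1$ in total. Together with $\delta_{\mathcal E_\omega}$ they form $M_{T_1,\dots,T_{n-1}}$. In the standard case $r_i=1$ these should recover the degree-$2$ generators of \cite[Theorem $4.2$]{digennaro2025saitostheoremrevisitedapplication}. The matrix is block-structured, so its maximal minors factor as a Vandermonde-type determinant in the variables $x_q^{r_q}$ of each block times monomials. From this I would read off $\gcd=1$, which gives codimension $2$ via Lemma \ref{lemGcdCodim}, and show $D$ is (a unit times) one minor, or a combination of minors. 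The degree check is
$$\rom{deg}_\omega D = 2nc - \sum_q (r_q-1)\omega_q = \sum_q \omega_q + (n-1)(c+1) + 1 - (n+1) + \text{correction},$$
and I would confirm the last exponent is $1$, matching $(1,c+1,\dots,c+1)$. Since $\delta_{\mathcal E_\omega}$ gives exponent $1$, the basis is $\delta_{\mathcal E_\omega}$, the $n-1$ derivations, and a degree-$1$ derivation $\mu$.

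\textbf{Part (ii).} Each $\theta'$ should satisfy $\theta'(\widetilde f_1)=\lambda\widetilde f_1$ and $\theta'(\widetilde f_2)=\lambda\widetilde f_2$ with the \emph{same} multiplier $\lambda$. Then $\theta'$ preserves every $a\widetilde f_1+b\widetilde f_2$, and by \eqref{eqDerIntersection} it lies in $\rom{Der}$ of the full product. Use $n-1$ such derivations of degree $c+1$ (possibly including the degree-$1$ derivation from (i)), whose wME-scheme should be $V(\widetilde f_1,\widetilde f_2)$ union coordinate strata, again of codimension $2$. The full product then lies in $I_E$ because $D$ already does, and Theorem \ref{propMEscheme} yields last exponent $(k-2)nc+1$ by degree count.

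\textbf{Main obstacle.} The hard step is producing derivations with equal multipliers on $\widetilde f_1$ and $\widetilde f_2$, and proving the membership of $D$ in $I_E$ through the minor computation. If the ansatz fails, I would imitate the explicit generators of \cite{digennaro2025saitostheoremrevisitedapplication}, replacing $x_q$ by $x_q^{r_q}$ and $\partial_{x_q}$ by $\omega_q x_q\partial_{x_q}$.
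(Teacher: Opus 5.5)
Your strategy is the paper's. Your $\theta_{j,j'}$ are just $c$ times the paper's $\delta_j$, $\eta_t$; using consecutive pairs instead of a fixed centre only changes the rows by a unimodular $\kk$-linear transformation. The route through Lemma \ref{lemGcdCodim}, Theorem \ref{propMEscheme} and ideal membership for (ii) is also exactly what the paper does.

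\textbf{The gap: the key tangency computation is wrong.}

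\emph{The cancellation you claim is false.} The terms $\theta_{j,j'}(\widetilde h_j)=-c\,x_{j'}^{r_{j'}}\widetilde h_j$ and $\theta_{j,j'}(\widetilde h_{j'})=c\,x_j^{r_j}\widetilde h_{j'}$ do not cancel: they are distinct monomials, one divisible by $x_{j'}^{2r_{j'}}$ and the other by $x_j^{2r_j}$. Moreover, ``preserves $\sum_{q\neq j,j'}\widetilde h_q$'' does not imply ``preserves the block sum''.

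\emph{The correct computation.} What the identity $x_j^{r_j}\widetilde h_j=x_{j'}^{r_{j'}}\widetilde h_{j'}$ actually gives is
\[
c\,(x_j^{r_j}-x_{j'}^{r_{j'}})(\widetilde h_j+\widetilde h_{j'})=c\,x_j^{r_j}\widetilde h_{j'}-c\,x_{j'}^{r_{j'}}\widetilde h_j=\theta_{j,j'}(\widetilde h_j+\widetilde h_{j'}).
\]
Hence $\theta_{j,j'}(\widetilde f_a)=c\,(x_j^{r_j}-x_{j'}^{r_{j'}})\,\widetilde f_a$ for both $a=1,2$, whichever block contains $j,j'$. This is (\ref{derivfa1})--(\ref{derivfa2}) in the paper. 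It removes your ``main obstacle'' at once: the multiplier is the same on $\widetilde f_1$ and $\widetilde f_2$, so every $\theta_{j,j'}$ is tangent to each $a\widetilde f_1+b\widetilde f_2$.

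\emph{The Euler correction is not a repair.} $\delta_{\mathcal E_\omega}$ multiplies every $\widetilde h_q$ by the same factor $nc$, so it could never fix a genuinely non-uniform multiplier. $\theta'_{j,j'}$ is logarithmic only because $\theta_{j,j'}$ already is. The correction is harmless, since adding multiples of the Euler row leaves $I_E$ unchanged, but it is unnecessary. As written, your argument therefore does not establish that the chosen derivations lie in $\rom{Der}$ of $D$ or of the pencil members.

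\textbf{Two smaller points.}

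\emph{Do not include the degree-$1$ derivation in (ii).} The derivation $\mu=\sum_{i>m}\frac{1}{r_i}x_i\partial_{x_i}$ from (i) satisfies $\mu(\widetilde f_1)=(n-m-1)\widetilde f_1$ but $\mu(\widetilde f_2)=(n-m)\widetilde f_2$. So $\mu$ is not tangent to $a\widetilde f_1+b\widetilde f_2$ when $ab\neq 0$. The wME-scheme in (ii) is the same one as in (i), which is precisely why codimension $2$ and $D\in I_E$ carry over.

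\emph{$D\in I_E$ is still open in your plan, and the minors are not Vandermonde-type.} Set $g_2=\sum_{i=m+1}^n\prod_{p\ge m+1,\,p\ne i}x_p^{r_p}$. Deleting a column $q\le m$ gives, up to a nonzero constant, $\prod_{p\le m,\,p\ne q}x_p^{r_p+1}\cdot x_{m+1}\cdots x_n\,g_2$; the case $q>m$ is symmetric, with $g_1=\sum_{i=0}^m\prod_{p\le m,\,p\ne i}x_p^{r_p}$. From this, $\gcd=1$ is immediate. Also
\[
D=x_{m+1}\cdots x_n\,g_2\sum_{q\le m}x_q\prod_{p\le m,\,p\ne q}x_p^{r_p+1},
\]
which is visibly a combination of minors. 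The paper instead appends the row of $\mu$ and computes the resulting Saito determinant directly as $\pm\det B_1\det B_2$, a unit times $D$.
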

	
	\begin{proof}
		Fix $m \in \{0,1,\ldots, n-1\}$ and let $\delta_{\mathcal{E}_\omega} = \sum^n_{i = 0} \omega_i x_i \partial_{x_i}$ be the weighted Euler derivation.\\
		(i) From
		\begin{equation*}
			\partial_{x_k}(\widetilde{f_1})=
			\begin{cases}
				r_k \widetilde{f_1}/x_k, & \text{if}\ 0 \le k \le m \\
				r_k(\widetilde{f_1} - \frac{x_0^{r_0} \cdots x_n^{r_n}}{x_k^{r_k}})/x_k, & \text{else}\  m+1 \le k \le n
			\end{cases}
		\end{equation*}
		\begin{equation*}
			\partial_{x_k}(\widetilde{f_2})=
			\begin{cases}
				r_k(\widetilde{f_2} - \frac{x_0^{r_0} \cdots x_n^{r_n}}{x_k^{r_k}})/x_k, & \text{if}\ 0 \le k \le m \\
				r_k \widetilde{f_2}/x_k, & \text{else}\  m+1 \le k \le n
			\end{cases}
		\end{equation*}
		one constructs derivations of $\rom{Der}(\widetilde{f_1}) \cap \rom{Der}(\widetilde{f_2}) = \rom{Der}((\widetilde{f_1}\widetilde{f_2})^{\rom{red}})$ by Remark \ref{rmkDerIntersection}:
		\begin{align}
		&\frac{1}{r_i} x_i^{r_i + 1} \partial_{x_i} - \frac{1}{r_j} x_j^{r_j + 1} \partial_{x_j} \ \text{for} \ i,j \in \{0, \ldots, m\} \ \text{with} \ i\ne j, \label{eqderivationsB1block}\\
		&\frac{1}{r_s} x_s^{r_s + 1} \partial_{x_s} - \frac{1}{r_t} x_t^{r_t + 1} \partial_{x_t} \ \text{for} \ s,t \in \{m+1, \ldots, n\} \ \text{with} \ s\ne t. \label{eqderivationsB2block}
		\end{align}
		For $\widetilde{f_a}$ with $a = 1,2$, direct calculation shows that 
		\begin{align}
			\left(\frac{1}{r_i} x_i^{r_i + 1} \partial_{x_i} - \frac{1}{r_j} x_j^{r_j + 1} \partial_{x_j}\right)(\widetilde{f_a}) 
			\label{derivfa1}
			&= (x_i^{r_i} - x_j^{r_j})\widetilde{f_a} \in \rom{Der}(\widetilde{f_a}),\\
			&\nonumber  \text{for} \  i,j \in \{0, \ldots, m\},\ \text{with} \ i\ne j, \ \text{and}\\
			\left(\frac{1}{r_s} x_s^{r_s + 1} \partial_{x_s} - \frac{1}{r_t} x_t^{r_t + 1} \partial_{x_t} \right)(\widetilde{f_a}) 
			\label{derivfa2}
			&= (x_s^{r_s} - x_t^{r_t})\widetilde{f_a} \in \rom{Der}(\widetilde{f_a}),\\
			&\nonumber \text{for} \ s,t \in \{m+1, \ldots, n\},\ \text{with}  \ s\ne t,
		\end{align} 
		each of same weighted degree $r_i\omega_i + 1$ for any $i\in \{0, \ldots, n\}$ by (\ref{eqromegaequal}) and the fact that 
		$\rom{deg}_\omega(\partial_{x_i}) = 1-\omega_i$.

		In (\ref{eqderivationsB1block}), there are ${{m+1}\choose{2}}$ derivations, so to obtain $m$ derivations, whose corresponding  
		tensors $R_j = (R_j^0, R_j^1, \ldots, R_j^n)$ are $\kk$-linearly independent, we fix any $i \in  \{0,1,\ldots, m\}$ and then take the remaining $m$ choices 
		for $j \in  \{0,1,\ldots, m\}$ with $j \ne i$. Choose $i=0$ and let
		\begin{equation}\label{derivationsB1}
		\delta_j = \frac{1}{r_0} x_0^{r_0 + 1} \partial_{x_0} - \frac{1}{r_j} x_j^{r_j + 1} \partial_{x_j} \ \text{for} \ 1 \le  j \le m.
		\end{equation}
		Similarly, among ${{n-m}\choose{2}}$ derivations in (\ref{eqderivationsB2block}), we fix $s \in \{m+1,\ldots, n\}$ and obtain $n-m-1$ derivations with
		tensors $S_t = (S_t^0, S_t^1, \ldots, S_t^n)$.
		Choose $s = m+1$ and let
		\begin{equation}\label{derivationsB2}
			\eta_t = \frac{1}{r_{m+1}} x_{m+1}^{r_{m+1} + 1} \partial_{x_{m+1}} - \frac{1}{r_t} x_t^{r_t + 1} \partial_{x_t} \ \text{for} \ m+2 \le t \le n.
		\end{equation}
		
		With the corresponding $n-1$ tensors $$R_j = \left(\frac{1}{r_0} x_0^{r_0 + 1}, 0, \ldots, 0, - \frac{1}{r_j} x_j^{r_j + 1}, 0 \ldots, 0\right) \ \text{for} \ 1 \le  j \le m,$$ 
		and $$S_t = \left(0, \ldots, 0, \frac{1}{r_{m+1}} x_{m+1}^{r_{m+1} + 1}, 0, \ldots, 0, - \frac{1}{r_t} x_t^{r_t + 1}, 0, \ldots, 0\right)
		\ \text{for} \ m+2 \le t \le n,$$ 
		one may build the matrix $M_{R_1, \ldots, S_n }$ of ME-scheme associated to the $n-1$ derivations (\ref{derivationsB1}) and (\ref{derivationsB2}).
		
		Let \begin{equation} \label{derivationLastmu} \mu = \sum_{i = m+1}^n \frac{1}{r_i} x_i \partial_{x_i}, \end{equation} 
		and note that $\mu \in \rom{Der}(\widetilde{f_1}) \cap \rom{Der}(\widetilde{f_2})= \rom{Der}((\widetilde{f_1}\widetilde{f_2})^{\rom{red}}):$
		\begin{align*}
			\mu(\widetilde{f_1}) &= \sum_{i = m+1}^n \left( \widetilde{f_1} - \frac{x_0^{r_0} \cdots x_n^{r_n}}{x_{i}^{r_{i}}} \right) 
			&\mu(\widetilde{f_2}) = (n-m)\widetilde{f_2}.\\
			&= (n-m)\widetilde{f_1} - \sum_{i = m+1}^n \frac{x_0^{r_0} \cdots x_n^{r_n}}{x_{i}^{r_{i}}}&\\
			&= (n-m)\widetilde{f_1} - \widetilde{f_1} =  (n-m-1)\widetilde{f_1},&\\
		\end{align*}
		This is the last derivation of weighted degree one, whose tensor we add to the matrix $M_{R_1, \ldots, S_n }$, to form the 
		Saito matrix $M$.
		Next we show that
		$$\rom{det} \ M = \frac{r_i \omega_i}{r_0 \cdots r_n} \rom{det} 
		 \begin{pmatrix}\begin{array}{c;{2pt/2pt}c}
		 		B_1 & 0\\ \hdashline[2pt/2pt]
		 		0 & B_2 
		 \end{array}\end{pmatrix} = u (\widetilde{f_1} \widetilde{f_2})^{\rom{red}},$$
		 where $u = (-1)^{n-1} \frac{r_i \omega_i}{r_0 \cdots r_n} \ne 0$ is a unit. First, factor $\frac{1}{r_i}$ from each column, then
		 by (\ref{eqromegaequal}), one can factor out $r_i \omega_i$ from the first row corresponding to the weighted Euler derivation.
		 Next we compute $\rom{det}\begin{pmatrix}\begin{array}{c;{2pt/2pt}c}
		 	B_1 & 0\\ \hdashline[2pt/2pt]
		 	0 & B_2 
		 \end{array}\end{pmatrix}$, where
		 \begin{align*}
		 	B_1 &= \begin{pmatrix}
		 		&x_0 & x_1 & \cdots&\cdots&x_{m-1}& x_m \\
		 		 &x_0^{r_0 + 1} &-x_1^{r_1 + 1} & 0 &\cdots&\cdots&0\\
		 		&x_0^{r_0 + 1} &0&-x_2^{r_2 + 1} &0&\cdots&0\\
		 		&\vdots&&&&\ddots&\vdots\\
		 		&x_0^{r_0 + 1}&0&\cdots&\cdots&0& -x_m^{r_m + 1}
		 	\end{pmatrix} \text{ and}\\
			B_2 &= \begin{pmatrix}
				&x_{m+1}^{r_{m+1} + 1}&-x_{m+2}^{r_{m+2} + 1}&0&\cdots&\cdots&0\\
				&x_{m+1}^{r_{m+1} + 1}&0&-x_{m+3}^{r_{m+3} + 1}&0&\cdots&0\\
				&\vdots&&&&\ddots&\vdots\\
				 &x_{m+1}^{r_{m+1} + 1}&0&\cdots&\cdots&0&-x_n^{r_n + 1}\\
				 &x_{m+1}& x_{m+2}&\cdots&\cdots&x_{n-1}& x_n
			\end{pmatrix}.
		\end{align*}
		Let $B_1 = (b_{i,j})$ with $1 \le i,j \le m+1$, then expanding along the first column, one finds 
		\begin{align*}
		\rom{det}\, B_1 
		&=  x_0 \cdots x_m \left( \sum_{i = 0}^m (-1)^m x_0^{r_0} \cdots \widehat{x_i^{r_i}} \cdots x_m^{r_m} \right)
		= (-1)^m \frac{x_0 \cdots x_m}{x_{m+1}^{r_{m+1}} \cdots x_n^{r_n}}\widetilde{f_2}.
		\end{align*}
	 Similarly,
		\begin{align*}\rom{det}\, B_2 &= x_{m+1} \cdots x_n \left( \sum_{i = m+1}^n (-1)^{n-m-1} x_{m+1}^{r_{m+1}} \cdots \widehat{x_i^{r_i}} \cdots x_n^{r_n} \right)
		=(-1)^{n-m-1} \frac{x_{m+1} \cdots x_n}{x_{0}^{r_{0}} \cdots x_m^{r_m}}\widetilde{f_1}.
		\end{align*} 
		Therefore 
		\begin{align*}\rom{det}\begin{pmatrix}\begin{array}{c;{2pt/2pt}c}
		B_1 & 0\\ \hdashline[2pt/2pt]
		0 & B_2 
	\end{array}\end{pmatrix} &= (-1)^{n-1} \frac{x_0 \cdots x_n}{x_{0}^{r_{0}} \cdots x_n^{r_n}} \widetilde{f_1}\widetilde{f_2}
		= (-1)^{n-1} (\widetilde{f_1} \widetilde{f_2})^{\rom{red}}
		\end{align*}
		 by Remark \ref{rmkf1f2reduced}. Finally, by Saito's criterion, 
		  $\delta_{\mathcal{E}_\omega}, \delta_j \ \text{for} \ 1 \le  j \le m$ in (\ref{derivationsB1}), $\eta_t\ \text{for} \ m+2 \le t \le n$ in 
		 (\ref{derivationsB2}) and $\mu$ form a basis for $\rom{Der}((\widetilde{f_1} \widetilde{f_2})^{\rom{red}}).$ Moreover, this also shows that
		 $E(R_1, \ldots, S_n )$ has codimension $2$ and that $(\widetilde{f_1} \widetilde{f_2})^{\rom{red}} \in I_{E(R_1, \ldots, S_n )},$ which by 
		 Theorem \ref{propMEscheme} shows that $\rom{Der}((\widetilde{f_1} \widetilde{f_2})^{\rom{red}})$ is free with weighted exponents $(1, r_i\omega_i + 1, \ldots, r_i\omega_i+ 1).$ 
		 This proves (i).\\
		 
		 (ii) Clearly, $\delta_{\mathcal{E}_\omega} \in \rom{Der}((\widetilde{f_1} \widetilde{f_2}\prod_{i = 3}^k (a_i \widetilde{f_1} + b_i \widetilde{f_2}))^{\rom{red}})$ for  
		 $k \ge 3$. 
		 Let $a_i \widetilde{S_1} + b_i \widetilde{S_2}= V(a_i \widetilde{f_1} + b_i \widetilde{f_2})$ be a generic member of the pencil. Note that (\ref{derivationsB1}) derivations
		 $$\delta_j = \frac{1}{r_0} x_0^{r_0 + 1} \partial_{x_0} - \frac{1}{r_j} x_j^{r_j + 1} \partial_{x_j} \in \rom{Der}(a_i \widetilde{f_1} + b_i \widetilde{f_2}) 
		 \ \text{for} \ 1 \le  j \le m,$$
		 since by (\ref{derivfa1}), $\delta_j (a_i \widetilde{f_1} + b_i \widetilde{f_2})= a_i (x_i^{r_i} - x_j^{r_j}) \widetilde{f_1} + 
		 b_i (x_i^{r_i} - x_j^{r_j}) \widetilde{f_2} = (x_i^{r_i} - x_j^{r_j})(a_i \widetilde{f_1} + b_i \widetilde{f_2}).$
		 Similarly, by (\ref{derivfa2}), for (\ref{derivationsB2}) derivations,
		 $$\eta_t = \frac{1}{r_{m+1}} x_{m+1}^{r_{m+1} + 1} \partial_{x_{m+1}} - \frac{1}{r_t} x_t^{r_t + 1} \partial_{x_t} 
		 \in \rom{Der}(a_i \widetilde{f_1} + b_i \widetilde{f_2}) \ \text{for} \ m+1 \le s< t \le n.$$
		 It follows that $$\delta_j, \eta_t \in \rom{Der}(\widetilde{f_1}) \cap \rom{Der}(\widetilde{f_2}) 
		 \bigcap_{i = 3}^k \rom{Der}(a_i \widetilde{f_1} + b_i \widetilde{f_2}) =\rom{Der}((\widetilde{f_1} \widetilde{f_2}\prod_{i = 3}^k (a_i \widetilde{f_1} + b_i \widetilde{f_2}))^{\rom{red}})$$ 
		 by Remark \ref{rmkDerIntersection} 
		 $\text{for} \ 1 \le  j \le m$, $\ m+1 \le s< t \le n$ 
		   and $k \ge 3$.
		   
		 Note that by
		 construction of $\widetilde{f_1}$ and $\widetilde{f_2}$, and since for a generic member $a_i \widetilde{S_1} + b_i \widetilde{S_2}$ of the pencil, $a_i$ and 
		 $b_i$ are non-zero, $\left(\widetilde{f_1}\widetilde{f_2}\prod_{i = 3}^k (a_i \widetilde{f_1} + b_i \widetilde{f_2})\right)^{\rom{red}} = 
		 (\widetilde{f_1} \widetilde{f_2})^{\rom{red}}\prod_{i = 3}^k (a_i \widetilde{f_1} + b_i \widetilde{f_2})$ and has weighted degree 
		 $knr_i\omega_i - (n+1)(r_i-1) + \sum_{i=0}^n\omega_i$.
		 
		 By the proof of (i), 
		 $\rom{Der}((\widetilde{f_1} \widetilde{f_2})^{\rom{red}})$ is free with exponents $(1, r_i\omega_i + 1, \ldots, r_i\omega_i+ 1)$ and the ME-scheme
		 $E(R_1, \ldots, S_n )$ has codimension $2$. This implies by Theorem \ref{propMEscheme} that 
		 $(\widetilde{f_1} \widetilde{f_2})^{\rom{red}} \in I_{E(R_1, \ldots, S_n )}$, which implies that 
		 $(\widetilde{f_1} \widetilde{f_2})^{\rom{red}}\prod_{i = 3}^k (a_i \widetilde{f_1} + b_i \widetilde{f_2}) \in I_{E(R_1, \ldots, S_n )}$ by definition of an ideal. By applying 
		 Theorem \ref{propMEscheme} again, we have that $V((\widetilde{f_1} \widetilde{f_2})^{\rom{red}}\prod_{i = 3}^k (a_i \widetilde{f_1} + b_i \widetilde{f_2}))$
		 is free with weighted exponents 
		 $(r_i\omega_i +1,\cdots, r_i\omega_i +1, d_n  )$, where
		 \begin{align*}
		 	d_n 
		 	&= \rom{deg}_\omega\left(\left(\widetilde{f_1}\widetilde{f_2}\right)^{\rom{red}}
		 	\prod_{i = 3}^k (a_i \widetilde{f_1} + b_i \widetilde{f_2})\right) + \sum_{i=0}^n (1-\omega_i) - (n-1)(r_i\omega_i +1) -1\\
		 	&=(k-2)nr_i\omega_i + 1.
		 \end{align*}
	 \end{proof}
	
	\begin{remark}
		Corollary \ref{corf1f2powerN} is a special case of Theorem \ref{thmf1f2weighted} when $r_i = r$ for all $i \in \{0, \ldots, n\}$. Since 
		$\widetilde{f_1}$ and $\widetilde{f_2}$ are already homogeneous of the same degree in this case, $\mathbb{P}^n_\omega = \mathbb{P}^n$ with
		$\omega = (1, \ldots, 1)$, and hence making the substitution $r_i = r$ and $\omega_i = 1$, we recover the exponents of the free divisors in Corollary
		 \ref{corf1f2powerN}.
	\end{remark}

It follows from Lemma \ref{lemFreeWeightedEqualFreeAffine} that the modules of logarithmic derivations of the divisors in affine $(n+1)$-space with polynomial ring 
$\kk[x_0, \ldots, x_n]$ are also free,
hence the corollary follows. 
Let $V$ be the $\kk$-module generated by monomials of degree at most $\rom{max}(\rom{deg}\, \widetilde{f_1}, \rom{deg}\, \widetilde{f_2})$. The 
$\kk$-submodule generated by $\{ \widetilde{f_1}, \widetilde{f_2}\}$ corresponds to a $\mathbb{P}^1 \subset \rom{Proj}(V)$, and we denote this line by 
$L_{ \widetilde{f_1}, \widetilde{f_2}}$.

\begin{corollary} \label{corAff_f1f2}
	Choose $m \in \{0,1,\ldots, n-1\}$ and let $r_0, \ldots, r_n$ be any positive integers; this defines
	$\widetilde{f_1}$ and $ \widetilde{f_2}$ as constructed in (\ref{eqf1f2tilde}).
	\begin{enumerate}
		\item The divisor $(\widetilde{S_1}\widetilde{S_2})^{\rom{red}}$ in affine space $\mathbb{A}^{n+1}$ defined by the polynomial
		$$(\widetilde{f_1}\widetilde{f_2})^{\rom{red}} = x_0 \cdots x_n 
		\left( \sum_{i = 0}^m x_{0}^{r_{0}} \cdots \widehat{x_i^{r_i}}\cdots x_m^{r_m} \right)
		\left( \sum_{i = m+1}^n x_{m+1}^{r_{m+1}} \cdots \widehat{x_i^{r_i}}\cdots x_n^{r_n} \right)$$
		is free.
		\item The divisor $(\widetilde{S_1}\widetilde{S_2})^{\rom{red}} \prod_{i = 3}^k (a_i \widetilde{S_1} + b_i \widetilde{S_2})$ in affine space $\mathbb{A}^{n+1}$ 
		defined by the polynomial
		$$\left(\widetilde{f_1}\widetilde{f_2}\right)^{\rom{red}}\prod_{i = 3}^k (a_i \widetilde{f_1} + b_i \widetilde{f_2}),$$
		for $k \ge 3$, where $a_i \widetilde{S_1} + b_i \widetilde{S_2} = V(a_i \widetilde{f_1} + b_i \widetilde{f_2}),$ with $[a_i;b_i] \in  \mathbb{P}^1$ for 
		$\ 3 \le i \le k$ are $k-2$ hypersurfaces in $\mathbb{A}^{n+1}$ corresponding to general distinct points of $L_{ \widetilde{f_1}, \widetilde{f_2}}$, is
		 free.\\		
	\end{enumerate}
\end{corollary}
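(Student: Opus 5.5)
The plan is to deduce Corollary \ref{corAff_f1f2} directly from Theorem \ref{thmf1f2weighted} combined with Lemma \ref{lemFreeWeightedEqualFreeAffine}. First I fix the weight vector $\omega$ with $\omega_i = r_0 \cdots \widehat{r_i} \cdots r_n$. By the remark preceding Remark \ref{rmkf1f2reduced}, it makes $\widetilde{f_1}$ and $\widetilde{f_2}$ weighted homogeneous of the same degree $r_i\omega_i n = r_0\cdots r_n\, n$, so that \eqref{eqromegaequal} holds. Hence the hypotheses of Theorem \ref{thmf1f2weighted} are met for every choice of $m \in \{0,\ldots,n-1\}$ and all positive $r_0,\ldots,r_n$. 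For part (i), Theorem \ref{thmf1f2weighted}(i) then says that $V((\widetilde{f_1}\widetilde{f_2})^{\rom{red}}) \subset \mathbb{P}^n_\omega$ is free. Since freeness of the graded module $\rom{Der}(F)$ over $R = \kk[x_0,\ldots,x_n]$ does not depend on the grading, Lemma \ref{lemFreeWeightedEqualFreeAffine} gives that the same polynomial defines a free divisor in $\mathbb{A}^{n+1}$. The explicit factored form follows from Remark \ref{rmkf1f2reduced}.

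For part (ii), I would again apply Theorem \ref{thmf1f2weighted}(ii) and then Lemma \ref{lemFreeWeightedEqualFreeAffine}. The only point to check is the genericity hypothesis. In the theorem, the $a_i\widetilde{S_1}+b_i\widetilde{S_2}$ are generic members of the pencil $C(\widetilde{f_1},\widetilde{f_2})$. In the corollary, they are $k-2$ general distinct points of the line $L_{\widetilde{f_1},\widetilde{f_2}}$, which is the same pencil viewed as a $\mathbb{P}^1\subset \rom{Proj}(V)$. So the two conditions coincide once the points are taken in a common dense open set.

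Inspecting the proof of (ii), the genericity is used only to ensure two things. First, $a_i,b_i \neq 0$. Second, the product is reduced, which requires the points $[a_i;b_i]$ to be distinct, different from $[1;0]$ and $[0;1]$, and each $a_i\widetilde{f_1}+b_i\widetilde{f_2}$ to share no factor with $x_0\cdots x_n$ or with the others. These are finitely many Zariski-open conditions on $(\mathbb{P}^1)^{k-2}$, and they are exactly what "general distinct points of $L_{\widetilde{f_1},\widetilde{f_2}}$" provides. Under them, the membership $\delta_j,\eta_t,\delta_{\mathcal{E}_\omega}\in \rom{Der}$ and the ideal membership in $I_{E(R_1,\ldots,S_n)}$ go through verbatim. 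Theorem \ref{propMEscheme} then yields freeness in $\mathbb{P}^n_\omega$, and Lemma \ref{lemFreeWeightedEqualFreeAffine} transfers it to $\mathbb{A}^{n+1}$.

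I expect the only delicate step to be this matching of the genericity conditions, namely verifying reducedness of the product for general $[a_i;b_i]$. I would handle it by noting that no coordinate $x_j$ divides $a\widetilde{f_1}+b\widetilde{f_2}$ when $ab\neq 0$. One sees this because the monomials $\widetilde{h_i}$ are distinct and, for each $j$, some $\widetilde{h_j}$ is not divisible by $x_j$. Distinct pencil members are coprime because a common factor would divide both $\widetilde{f_1}$ and $\widetilde{f_2}$, which are coprime since they involve disjoint sets of missing variables. Finally, a general member of the pencil is reduced, since the non-reduced members form a proper closed subset, as witnessed in characteristic $0$ by, e.g., Bertini away from the base locus.
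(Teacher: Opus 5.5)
Your proposal is correct and follows the paper's route. The paper also proves both parts by applying Theorem~\ref{thmf1f2weighted}, with a weight vector such as $\omega_i=\prod_{j\neq i}r_j$, and then transferring freeness to $\mathbb{A}^{n+1}$ via Lemma~\ref{lemFreeWeightedEqualFreeAffine}. Your extra check that general points of $L_{\widetilde{f_1},\widetilde{f_2}}$ give $a_ib_i\neq 0$ and a reduced product spells out what the paper only asserts in the proof of Theorem~\ref{thmf1f2weighted}(ii).
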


By the \textit{cone construction}, one can projectivize and add the hyperplane at infinity $\{ x_{n+1} = 0\}$. Let $\_^h$  is homogenization in the variable $x_{n+1}$
 in $\kk[x_0, ... , x_{n+1}]$. 
The corresponding divisors are free in projective $n$-space as shown by the following theorem.

\begin{theorem} \label{thmDiGProj} Choose $m \in \{0,1,\ldots, n-1\}$ and let $r_0, \ldots, r_n$ be positive integers such that 
	$r_0 \le r_1 \le \ldots \le r_n$; this defines
	$\widetilde{f_1}$ and $ \widetilde{f_2}$ as constructed in (\ref{eqf1f2tilde}) with $\rom{deg}\, \widetilde{f_2}^h \ge \rom{deg}\, \widetilde{f_1}^h$.  Let the 
	hyperplane $H = V(x_{n+1}) \subset \mathbb{P}^{n+1}.$
	
	\begin{enumerate}
		\item The divisor $H((\widetilde{S_1}\widetilde{S_2})^{\rom{red}})^h$ in projective space $\mathbb{P}^{n+1}$ defined by the
		homogeneous polynomial $x_{n+1} ((\widetilde{f_1}\widetilde{f_2})^{\rom{red}})^h =$
			$$x_0 \cdots x_{n+1}
			\left( \sum_{i = 0}^m x_{n+1}^{r_i - r_0} x_{0}^{r_{0}} \cdots \widehat{x_i^{r_i}}\cdots x_m^{r_m} \right)
			\left( \sum_{i = m+1}^n x_{n+1}^{r_i - r_{m+1}} x_{m+1}^{r_{m+1}} \cdots \widehat{x_i^{r_i}}\cdots x_n^{r_n} \right)$$
		is free with exponents $(1,1,r_1 +1, r_2+1, \ldots,  \widehat{r_{m+1} +1}, \ldots, r_n +1)$.
		
		\item The divisor $H((\widetilde{S_1}\widetilde{S_2})^{\rom{red}})^h \prod_{i = 3}^k \left( a_i \widetilde{S_1} + b_i \widetilde{S_2} \right)^h$ in projective 
		space $\mathbb{P}^{n+1}$ defined by the homogeneous polynomial
		$$x_{n+1} ((\widetilde{f_1}\widetilde{f_2})^{\rom{red}})^h\prod_{i = 3}^k \left( a_i \widetilde{f_1} + b_i \widetilde{f_2} \right)^h,$$
		for $k \ge 3$, $[a_i;b_i] \in  \mathbb{P}^1$ and 
		$\left( a_i \widetilde{S_1} + b_i \widetilde{S_2} \right)^h= V \left(\left( a_i \widetilde{f_1} + b_i \widetilde{f_2} \right)^h \right)$ are generic members
		of the pencil $C(x_{n+1}^{\rom{deg} \widetilde{f_2} - \rom{deg} \widetilde{f_1}} \cdot \widetilde{f_1}^h, \widetilde{f_2}^h )$, is free with
		exponents $$(1,r_1 +1, r_2+1, \ldots,  \widehat{r_{m+1} +1}, \ldots, r_n +1, (k-2)\sum_{i = 1}^n r_i + 1 ).$$\\
		
	\end{enumerate}
\end{theorem}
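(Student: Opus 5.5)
The plan for both parts is to lift the explicit affine bases from the proof of Theorem~\ref{thmf1f2weighted} to $\mathbb{P}^{n+1}$. For (i) I will use Proposition~\ref{propConeConstructionFree}. For (ii) I will use the standard Multiple eigenscheme criterion \cite[Proposition $3.3$]{digennaro2025saitostheoremrevisitedapplication} in $\mathbb{P}^{n+1}$, exactly as in Theorem~\ref{thmBrieskorn-PhamProj}. Write $f:=(\widetilde{f_1}\widetilde{f_2})^{\rom{red}}\in R=\kk[x_0,\ldots,x_n]$. Since $r_0\le\cdots\le r_n$, the standard degrees are $\rom{deg}\,\widetilde{f_2}=\sum_{i=0}^n r_i-r_0$ and $\rom{deg}\,\widetilde{f_1}=\sum_{i=0}^n r_i-r_{m+1}$. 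By Remark~\ref{rmkf1f2reduced} this gives $\rom{deg}\, f=\sum_{i=0}^n r_i-r_0-r_{m+1}+n+1$.

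\emph{Part (i).} The proof of Theorem~\ref{thmf1f2weighted}(i) gives a basis of $\rom{Der}(f)$ over $R$, valid after forgetting the grading, consisting of:
\begin{itemize}
\item the weighted Euler derivation $\delta_{\mathcal{E}_\omega}$;
\item $\mu=\sum_{i>m}\frac{1}{r_i}x_i\partial_{x_i}$;
\item $\delta_j=\frac{1}{r_0}x_0^{r_0+1}\partial_{x_0}-\frac{1}{r_j}x_j^{r_j+1}\partial_{x_j}$ for $1\le j\le m$;
\item $\eta_t=\frac{1}{r_{m+1}}x_{m+1}^{r_{m+1}+1}\partial_{x_{m+1}}-\frac{1}{r_t}x_t^{r_t+1}\partial_{x_t}$ for $m+2\le t\le n$.
\end{itemize}
Its Saito determinant is a unit times $f$. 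With respect to the standard grading, and using the ordering of the $r_i$, the degrees are $1,1,r_j+1$ and $r_t+1$. These are exactly the claimed exponents, which omit $r_0+1$ and $r_{m+1}+1$. Their sum is $2+(n-1)+\sum_i r_i-r_0-r_{m+1}=\rom{deg}\, f$. Proposition~\ref{propConeConstructionFree} then gives freeness of $x_{n+1}f^h$ with these exponents. One remaining check is that $x_{n+1}f^h$ equals the displayed polynomial. This is a direct homogenization of each factor of $f$, where the $x_0\cdots x_n$ factor and the two sums are homogenized separately.

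\emph{Part (ii).} Let $F=f\prod_{i=3}^k(a_i\widetilde{f_1}+b_i\widetilde{f_2})$. The polynomial in the statement is $G:=x_{n+1}f^h\prod_i(a_i\widetilde{f_1}+b_i\widetilde{f_2})^h$. Here the homogenized pencil members have degree $\rom{deg}\,\widetilde{f_2}$; this is the pencil $C(x_{n+1}^{\rom{deg}\widetilde{f_2}-\rom{deg}\widetilde{f_1}}\widetilde{f_1}^h,\widetilde{f_2}^h)$.

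I take the $n$ homogeneous derivations $\delta_{\mathcal{E}_\omega},\delta_j^h,\eta_t^h$ of $S=\kk[x_0,\ldots,x_{n+1}]$. These are the basis of (i) with $\mu$ removed, and none of them has a $\partial_{x_{n+1}}$ term. By the proof of Theorem~\ref{thmf1f2weighted}(ii), they lie in $\rom{Der}(F)$. By Corollary~\ref{corHomogenizingDerv} they then lie in $\rom{Der}(G)$.

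Next form the ME-scheme matrix in $\mathbb{P}^{n+1}$. Its first row is $(x_0,\ldots,x_{n+1})$, followed by the tensors of these $n$ derivations. Appending the row of $\mu$ (homogeneous of degree $1$) gives a Saito matrix for $x_{n+1}f^h$, by part (i). Expanding along the $\mu$ row shows $x_{n+1}f^h\in I_E$, where $I_E$ is the ideal of maximal minors. Since $I_E$ is an ideal, it follows that $G\in I_E$.

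\emph{Codimension and the last exponent.} The main obstacle is showing that $E$ has codimension $2$, i.e.\ $\gcd$ of the maximal minors $=1$ by Lemma~\ref{lemGcdCodim}. I would compute two explicit minors.
\begin{itemize}
\item The minor obtained by deleting the column of $x_{n+1}$: since no derivation has a $\partial_{x_{n+1}}$ term, this minor is, up to a unit, the determinant of the $n+1$ affine-type rows $(\delta_{\mathcal{E}},\delta_{\mathcal{E}_\omega},\delta_j,\eta_t)$ on $x_0,\ldots,x_n$. This splits into block determinants similar to $B_1,B_2$ and should be a combination of the $x_i^{r_i}$ not divisible by $x_{n+1}$.
\item The minor with column $x_0$ removed: it is $x_{n+1}$ times a product of monomials in $x_1,\ldots,x_n$, up to a unit.
\end{itemize}
Comparing the factors of these two minors should give $\gcd=1$. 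Then \cite[Proposition $3.3$]{digennaro2025saitostheoremrevisitedapplication} yields freeness with exponents $(1,r_1+1,\ldots,\widehat{r_{m+1}+1},\ldots,r_n+1,d_n)$. Here
$$d_n=\rom{deg}\, G-1-\Big(1+\sum_{i\ne 0,m+1}(r_i+1)\Big).$$
Substituting $\rom{deg}\, G=1+\rom{deg}\, f+(k-2)\big(\sum_{i=0}^n r_i-r_0\big)$ gives $d_n=(k-2)\sum_{i=1}^n r_i+1$, as claimed.
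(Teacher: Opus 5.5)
Your part (i), your membership argument $x_{n+1}f^h\in I_E\Rightarrow G\in I_E$, and your count for the last exponent all agree with the paper. In the membership argument, expanding the Saito matrix of (i) along the $\mu$ row is exactly the forward direction of \cite[Proposition $3.3$]{digennaro2025saitostheoremrevisitedapplication} that the paper invokes. The gap is the codimension-$2$ step. The paper simply asserts it because the $\zeta_i^h$ are part of a basis in (i); you propose to verify it with two minors, and that verification cannot work. Every entry in the column of $x_i$, $i\le n$, is divisible by $x_i$: the entries are $x_i$, $\omega_i x_i$ and multiples of $x_i^{r_i+1}$. The column of $x_{n+1}$ is $(x_{n+1},0,\ldots,0)^T$. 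So the minor $m_a$ obtained by deleting the column of $x_a$ is divisible by $\prod_{i\ne a}x_i$, and any two minors share a nonconstant monomial. In particular your $m_0$ and $m_{n+1}$ share $x_1\cdots x_n$. Also, $m_0$ is not a monomial when $m\le n-2$. Up to a unit it is $x_{n+1}\prod_{j=1}^m x_j^{r_j+1}\,x_{m+1}\cdots x_n\,\Phi_1$, where $\Phi_1=\sum_{i=m+1}^n x_{n+1}^{r_i-r_{m+1}}x_{m+1}^{r_{m+1}}\cdots\widehat{x_i^{r_i}}\cdots x_n^{r_n}$ is the second bracket of the displayed polynomial.

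The real crux is $x_{n+1}$: every minor except $m_{n+1}$ contains the $x_{n+1}$-column and is therefore divisible by $x_{n+1}$. Expand $m_{n+1}$ by Laplace along the Euler and weighted Euler rows; the remaining rows split into the $\delta_j$-block on the columns of $x_0,\ldots,x_m$ and the $\eta_t$-block on those of $x_{m+1},\ldots,x_n$. This gives $m_{n+1}\equiv c\,(r_0-r_{m+1})\,x_0\cdots x_n\,\Phi_1\Phi_2 \pmod{x_{n+1}}$, with $c\in\kk^*$ and $\Phi_2$ the first bracket.

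So for $r_0<r_{m+1}$ you can finish correctly. The minor $m_{n+1}$ excludes $x_{n+1}$, the minor deleting the column of $x_i$ excludes $x_i$, and $\gcd(m_0,m_{m+1})$ is a monomial. Hence $\gcd=1$ by Lemma~\ref{lemGcdCodim}.

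But if $r_0=r_1=\cdots=r_{m+1}$, then $x_{n+1}$ divides every maximal minor. Then $E$ is not of codimension $2$, and \cite[Proposition $3.3$]{digennaro2025saitostheoremrevisitedapplication} cannot be applied at all. For instance, take $n=2$, $m=0$, $r_0=r_1=r_2=1$. The rows are $(x_0,x_1,x_2,x_3)$, $(x_0,x_1,x_2,0)$, $(0,x_1^2,-x_2^2,0)$, and the maximal minors are $0$, $-x_1x_2x_3(x_1+x_2)$, $-x_0x_2^2x_3$, $x_0x_1^2x_3$, with gcd $x_3$. This happens even though these derivations are part of a basis in (i).

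So this subcase needs a separate argument. When all $r_i$ are equal, $G=x_{n+1}F$ with $F\in\kk[x_0,\ldots,x_n]$ free by Corollary~\ref{corf1f2powerN}. Bordering a Saito matrix of $F$ by the Euler derivation of $\mathbb{P}^{n+1}$ then gives a Saito matrix for $G$. The paper's inference "part of a basis, hence codimension $2$'' fails in exactly this subcase, so you cannot fall back on it.
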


\begin{proof}Denote by $R:= \kk[x_0, \ldots, x_n]$ and $S := \kk[x_0, \ldots, x_n, x_{n+1}].$ Let $\delta_{\mathcal{E}} = \sum_{i = 0}^{n+1} x_i\partial_{x_i}$ be the Euler derivation.
	Consider the $n$-derivations in the proof of Theorem \ref{thmf1f2weighted}: in (\ref{derivationsB1}), (\ref{derivationsB2}),
	\begin{align*}
		\delta_{\mathcal{E}_\omega} &= \sum^n_{i = 0} \omega_i x_i \partial_{x_i}\ \text{with} \ \omega_i = \prod_{j \ne i} r_j, \ \text{for} \  i = 0, \ldots, n,\\
		\delta_j &= \frac{1}{r_0} x_0^{r_0 + 1} \partial_{x_0} - \frac{1}{r_j} x_j^{r_j + 1} \partial_{x_j} \ \text{for} \ 1 \le  j \le m,\\
		\eta_t &= \frac{1}{r_{m+1}} x_{m+1}^{r_{m+1} + 1} \partial_{x_{m+1}} - \frac{1}{r_t} x_t^{r_t + 1} \partial_{x_t} \ \text{for} \ m+2 \le t \le n.
	\end{align*}
	Since $(\widetilde{f_1}\widetilde{f_2})^{\rom{red}} \in R$ and because the $n$-derivations, which we will denote by $\zeta_i$ for $i = 1, \ldots, n$, are in
	 $\rom{Der}((\widetilde{f_1}\widetilde{f_2})^{\rom{red}})$, by Corollary \ref{corHomogenizingDerv} this implies that for each homogenized derivation, 
	 $\zeta_i^h \in \rom{Der}(((\widetilde{f_1}\widetilde{f_2})^{\rom{red}})^h)$. It follows that $\zeta_i^h \in \rom{Der}(x_{n+1} ((\widetilde{f_1}\widetilde{f_2})^{\rom{red}})^h)$ for all 
	  $i \in \{ 1, \ldots, n\}$ since they all have zero coefficients for $\partial_{x_{n+1}}$.\\
	  
	  (i) From (\ref{derivationLastmu}), $\mu = \sum_{i = m+1}^n \frac{1}{r_i} x_i \partial_{x_i}$ is in $\rom{Der}((\widetilde{f_1}\widetilde{f_2})^{\rom{red}}) \subset \rom{Der}_{\kk}(R)$ and
	  from the proof of Theorem \ref{thmf1f2weighted}(i), we have seen that $\{ \mu,\zeta_1, \ldots, \zeta_n\}$ is a basis for
	   $\rom{Der}((\widetilde{f_1} \widetilde{f_2})^{\rom{red}}).$ By Corollary \ref{corAff_f1f2}(i),
	   $\rom{Der}((\widetilde{f_1} \widetilde{f_2})^{\rom{red}})$ is a free $R$-module, and a direct computation shows that\\ 
	   $\rom{deg} \,\mu + \sum^n_{i= 1} \rom{deg} \,\zeta_i = \rom{deg}((\widetilde{f_1} \widetilde{f_2})^{\rom{red}}).$ So by Proposition \ref{propConeConstructionFree},  
	   	$\rom{Der}(x_{n+1} \cdot( (\widetilde{f_1} \widetilde{f_2})^{\rom{red}})^h)$
	   is free with exponents $$(1,1,r_1 +1, r_2+1, \ldots,  \widehat{r_{m+1} +1}, \ldots, r_n +1),$$
	   which completes the proof of (i).\\
	  
	   (ii) Consider the $n$ derivations $\zeta_1, \ldots, \zeta_n$ defined at the beginning of the proof. In Theorem \ref{thmf1f2weighted}(ii)'s proof, we have seen that $\zeta_i \in
	   \rom{Der}((\widetilde{f_1} \widetilde{f_2})^{\rom{red}}\prod_{i = 3}^k (a_i \widetilde{f_1} + b_i \widetilde{f_2}))$ $\text{for}\  i = 1, \ldots, n$ and $k \ge 3$.
	   Hence by Corollary \ref{corHomogenizingDerv} and the arguments at the beginning, we conclude that $$\zeta_i^h \in
	   \rom{Der}\left(x_{n+1} \left(\left(\widetilde{f_1} \widetilde{f_2} \right)^{\rom{red}}\prod_{i = 3}^k (a_i \widetilde{f_1} + b_i \widetilde{f_2} )\right)^h \right)$$ 
	   $\text{for}\  i = 1, \ldots, n$ and $k \ge 3$. From Theorem \ref{thmf1f2weighted}(ii)'s proof that for a generic member of the pencil, $(a_i \widetilde{f_1} + b_i \widetilde{f_2} )$ 
	   is a reduced polynomial,  
	   and hence observe that 
	   	$$x_{n+1} \left(\left(\widetilde{f_1} \widetilde{f_2} \right)^{\rom{red}}\prod_{i = 3}^k (a_i \widetilde{f_1} + b_i \widetilde{f_2} )\right)^h \\
	   	=x_{n+1} \left(\left(\widetilde{f_1} \widetilde{f_2} \right)^{\rom{red}} \right)^h\prod_{i = 3}^k \left(a_i \widetilde{f_1} + b_i \widetilde{f_2} \right)^h$$
	   	is a reduced polynomial.
	   Let $T_i = ( g_0^i, g_1^i,\ldots ,g_{n+1}^i) \in (Sym^{d_i} \, k^{n+2})^{\oplus {n+2}}$ be the partially symmetric tensor corresponding to the homogeneous derivation
	   $\zeta_i^h$ for $i = 1, \ldots, n$ ordered by smallest degree, where $(d_1, d_2, \ldots, d_n) = (1,r_1 +1, r_2+1, \ldots,  \widehat{r_{m+1} +1}, \ldots, r_n +1)$ as
	    computed in (i), and consider the ME-scheme $E(T_1, \ldots, T_{n})$. 
	   Moreover, by the proof in part (i), 
	   $\rom{Der}(x_{n+1} ( (\widetilde{f_1} \widetilde{f_2})^{\rom{red}})^h)$ is free and $\zeta_i^h$
	    for $i = 1, \ldots, n$ are also part of a basis for
	   $\rom{Der}(x_{n+1} ( (\widetilde{f_1} \widetilde{f_2})^{\rom{red}})^h)$, hence $E(T_1, \ldots, T_{n})$ has codimension $2$. By
	    \cite[Proposition $3.3$]{digennaro2025saitostheoremrevisitedapplication}, this implies that
	    $x_{n+1} ( (\widetilde{f_1} \widetilde{f_2})^{\rom{red}})^h \in I_{E(T_1, \ldots, T_{n})}$, and this further implies that 
	    $$x_{n+1} ( (\widetilde{f_1} \widetilde{f_2})^{\rom{red}})^h \prod_{j = 3}^k (a_j \widetilde{f_1} + b_j \widetilde{f_2})^h 
	    \in I_{E(T_1, \ldots, T_{n})}$$ by definition of an ideal. An application of \cite[Proposition $3.3$]{digennaro2025saitostheoremrevisitedapplication} again now yields that 
	    $$\rom{Der}(x_{n+1} ( (\widetilde{f_1} \widetilde{f_2})^{\rom{red}})^h \prod_{j = 3}^k (a_j \widetilde{f_1} + b_j \widetilde{f_2})^h )$$ 
	    is free with exponents
	    $$(1,r_1 +1, r_2+1, \ldots,  \widehat{r_{m+1} +1}, \ldots, r_n +1, d_{n+1}).$$ Finally, we compute the degree $d_{n+1}$ of the last remaining derivation.
		Let $$F' = x_{n+1} ( (\widetilde{f_1} \widetilde{f_2})^{\rom{red}})^h \prod_{j = 3}^k (a_j \widetilde{f_1} + b_j \widetilde{f_2})^h ,$$ 
		so $\rom{deg}\, F' = (k-2)\sum_{i = 1}^n r_i + (n+2)+\sum_{i = 1}^m r_i + \sum_{j = m+2}^n r_j .$
		\begin{align*}
			\text{Therefore} \ d_{n+1} &= \rom{deg}\, F' - (\sum_{i = 1}^m (r_i + 1) + \sum_{j = m+2}^n (r_j + 1) + 2)\\  
			&= (k-2)\sum_{i = 1}^n r_i + 1.
		\end{align*}
\end{proof}

\begin{remark}
	Similar to Remark \ref{rmkBasesAffLiftRefArr}, while in the affine case $\mathbb{A}^{n+1}$ (and weighted projective space $\mathbb{P}^{n}_\omega$) one could fix any 
	$i \in \{ 0, \ldots, m\}$ and any $s \in \{ m+1, \ldots, n \}$ for derivations in (\ref{derivationsB1}) and (\ref{derivationsB2}) respectively to form part of a basis of 
	$\rom{Der}((\widetilde{f_1} \widetilde{f_2})^{\rom{red}})$ or $\rom{Der}(( \widetilde{f_1}\widetilde{f_2})^{\rom{red}}\prod_{i = 3}^k (a_i \widetilde{f_1} + b_i \widetilde{f_2}))$, 
	this is no longer true in the projective case $\mathbb{P}^{n+1}$ via the cone construction as shown in the proof of Theorem \ref{thmDiGProj} (i): choosing to 
	fix $i$ and $s$ different from $i=0$ and $s= m+1$ will
	in general not give derivations that are part of a basis that can be lifted (see Proposition \ref{propConeConstructionFree}) since the sum of degrees of the derivations of 
	the lifted bases will not be equal to the degrees of the reduced polynomials of Theorem \ref{thmDiGProj} (i) and (ii).
\end{remark}

\nocite{*}
\bibliographystyle{rendiconti}
\bibliography{bibliography}
\end{document}